
\documentclass[letterpaper]{amsart}
\usepackage[utf8]{inputenc}
\usepackage[T1]{fontenc}
\usepackage{xspace}
\usepackage{amsmath,amsthm,amssymb}
\usepackage{ifthen}
\usepackage[all]{xy}
\usepackage{graphicx}

\newtheorem{theorem}{Theorem}
\newtheorem{corollary}[theorem]{Corollary}
\newtheorem{lemma}[theorem]{Lemma}
\newtheorem{proposition}[theorem]{Proposition}

\newtheorem{question}[theorem]{Question}

\theoremstyle{definition}
\newtheorem{definition}[theorem]{Definition}

\renewcommand{\P}{\mathbb{P}}
\newcommand{\Q}{\mathbb{Q}}
\newcommand{\R}{\mathbb{R}}
\newcommand{\C}{\mathbb{C}}
\newcommand{\B}{\mathbb{B}}
\newcommand{\D}{\mathcal{D}}
\renewcommand{\c}{\mathfrak{c}}
\newcommand{\grma}{\ensuremath{\mathrm{grMA}}\xspace}
\newcommand{\grpfa}{\ensuremath{\mathrm{grPFA}}\xspace}
\newcommand{\ma}{\ensuremath{\mathrm{MA}}\xspace}
\newcommand{\mac}{\ensuremath{\mathrm{MA}(\textup{countable})}\xspace}
\newcommand{\macoh}{\ensuremath{\mathrm{MA}(\textup{Cohen})}\xspace}
\newcommand{\mas}{\ensuremath{\mathrm{MA}(\textup{\(\sigma\)-centred})}\xspace}
\newcommand{\mak}{\ensuremath{\mathrm{MA}(\textup{Knaster})}\xspace}
\newcommand{\pfa}{\ensuremath{\mathrm{PFA}}\xspace}
\DeclareMathOperator{\Term}{Term}
\newcommand{\Termfin}{\Term_{\mathrm{fin}}}
\newcommand{\Bnull}{\B_{\textup{null}}}
\DeclareMathOperator{\cf}{cf}
\newcommand{\set}[2]{\{\,#1\,;\,#2\,\}}

\DeclareMathOperator{\Add}{Add}
\newcommand{\Ptail}{\P_{\mathrm{tail}}}

\newcommand{\Htail}{H_{\mathrm{tail}}}
\DeclareMathOperator{\Coll}{Coll}
\DeclareMathOperator{\dom}{dom}

\newcommand{\leftexp}[2]{{\vphantom{#2}}^{#1}{#2}}
\newcommand{\funcs}[2]{\leftexp{#1}{#2}}
\newcommand{\forces}{\Vdash}
\newcommand{\rest}{\mathbin{\upharpoonright}}

\newcommand*{\concat}{\mathchoice{\mathbin{\raisebox{0.7ex}{$\smallfrown$}}}{\mathbin{\raisebox{0.7ex}{$\smallfrown$}}}{\mathbin{\raisebox{0.15ex}{$\smallfrown$}}}{\mathbin{\raisebox{0.15ex}{$\smallfrown$}}}}

\ifthenelse{\isundefined{\shownotes}}{
  \newcommand{\note}[1]{}
}{
  \newcommand{\note}[1]{\marginpar{\raggedright\footnotesize #1}}
}

\begin{document}
\title{The grounded Martin's Axiom}
\author{Miha E.\ Habič}
\address{M.\ Habic\\ The Graduate Center of the City University of New York\\ Mathematics\\
365 Fifth Avenue\\New York\\ NY 10016\\ USA}
\email{mhabic@gradcenter.cuny.edu}
\subjclass[2010]{03E50}
\begin{abstract}
We introduce a variant of Martin's Axiom, called the grounded Martin's Axiom, or \grma, which
asserts that the universe is a ccc forcing extension in which Martin's
Axiom holds for posets in the ground model. This principle already implies
several of the combinatorial consequences of \ma.
The new axiom is shown to be consistent 
with the failure of \ma and a singular continuum. We prove that 
\grma is preserved in a 
strong way when adding a Cohen real and that adding a random real to a model of \ma 
preserves \grma (even though it destroys \ma itself). We also consider the
analogous variant of the Proper Forcing Axiom.
\end{abstract}

\maketitle
The standard Solovay-Tennenbaum proof of the consistency of Martin's Axiom with
a large continuum starts by choosing a suitable cardinal \(\kappa\) and then
proceeds in an iteration of length \(\kappa\) by forcing with ccc posets of size
less than \(\kappa\), and not just those in the ground model but also those
arising in the intermediate extensions. To ensure that all of the potential ccc posets 
are considered, some bookkeeping device is usually employed.

Consider now the following reorganization of the argument. Instead of iterating
for \(\kappa\) many steps we build a length \(\kappa^2\) iteration by first dealing
with the \(\kappa\) many small posets in the ground model, then the small posets in
that extension, and so on. The full length \(\kappa^2\) iteration can be seen as a length 
\(\kappa\) iteration, all of whose iterands are themselves length \(\kappa\) iterations.

In view of this reformulation, we can ask what happens if we halt this
construction after forcing with the first length \(\kappa\) iteration, when we have, in 
effect, ensured that Martin's Axiom holds for posets from the ground model. What 
combinatorial consequences of Martin's Axiom follow already from this weaker principle? 
We aim in this paper to answer these questions (at least partially).

\section{Forcing the grounded Martin's Axiom}

\begin{definition}
%
The \emph{grounded Martin's Axiom} (\grma) asserts that \(V\) is a ccc forcing
extension of some ground model \(W\) and \(V\) satisfies the conclusion of
Martin's Axiom for posets \(\Q\in W\) which are still ccc in \(V\).
\end{definition}

To be clear, in the definition we require that \(V\) have \(\mathcal{D}\)-generic
filters for any \(\mathcal{D}\in V\) a family of fewer than \(\c^V\) dense
subsets of \(\Q\).
We should also note that, while the given definition is second-order, \grma is in fact 
first-order expressible, using the result of Reitz \cite{Reitz2007:GroundAxiom} that the 
ground models of the universe are uniformly definable.

%
%

If Martin's Axiom holds, we may simply take \(W=V\) in the definition,
which shows that the grounded Martin's Axiom is implied by Martin's Axiom.
In particular, by simply performing the usual Martin's Axiom iteration,
\(\grma+\c=\kappa\) can be forced from any model where \(\kappa\) is regular
and satisfies \(2^{<\kappa}=\kappa\). It will be shown in theorem~\ref{thm:CanonicalgrMA},
however, that the grounded Martin's Axiom is strictly weaker than Martin's Axiom.
Ultimately we shall see that \grma retains some of the interesting combinatorial
consequences of Martin's Axiom (corollary~\ref{cor:grMALargeCharacts}), while 
also being more robust with respect to mild forcing 
(theorems~\ref{thm:CohenPreservesgrMA} and \ref{thm:RandomGivesgrMA}).

As in the case of Martin's Axiom, a key property of the grounded Martin's Axiom is
that it is equivalent to its restriction to posets of small size.

\begin{lemma}
\label{lemma:grMASmallPosets}
The grounded Martin's Axiom is equivalent to its restriction to posets of size less 
than continuum, i.e.\ the following principle:

{\newlength{\bit}%
\setlength{\bit}{\linewidth}%
\addtolength{\bit}{-5em}%
\vspace{\baselineskip}%
\parbox{\bit}{The universe \(V\) is a ccc forcing extension of some ground
model \(W\) and \(V\) satisfies the conclusion of Martin's Axiom for
posets \(\Q\in W\) of size less than \(\c^V\) which are still ccc in \(V\).}%
\hspace*{2em}\textup{(\(\ast\))}%
\vspace*{\baselineskip}}
\end{lemma}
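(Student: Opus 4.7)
One direction is immediate. For the converse, assuming $(\ast)$, let $W$ be a ground model witnessing it. Fix $\Q\in W$ that is ccc in $V$, and let $\D\in V$ be a family of fewer than $\c^V$ dense subsets of $\Q$; the plan is to construct a $\D$-generic filter on $\Q$.

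Since $\Q$ is ccc in $V$, I would first pick, in $V$, for each $D\in\D$ a countable maximal antichain $A_D\subseteq D$; any filter meeting $A_D$ then meets $D$, so it suffices to meet the $A_D$'s. Let $X=\bigcup_{D\in\D}A_D\in V$, so that $|X|^V<\c^V$. Because $V$ is a ccc forcing extension of $W$, a standard nice-name argument produces a set $Y\in W$ with $X\subseteq Y\subseteq\Q$ and $|Y|^W<\c^V$: enumerating $X$ in $V$, each entry has a $\P$-name in $W$ with countably many possible values by ccc, and $Y$ collects these possibilities over all entries.

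Working inside $W$, I would next take an elementary submodel $N\prec H_\theta^W$ (for suitably large $\theta$) with $N\in W$, $Y\cup\{\Q\}\subseteq N$, and $|N|^W<\c^V$, and set $\Q^*=\Q\cap N\in W$. Then $|\Q^*|^W<\c^V$ and $\Q^*$ remains ccc in $V$, because any antichain of $\Q^*$ is an antichain of $\Q$. Moreover, each $A_D\subseteq\Q^*$ is predense in $\Q^*$: given $q\in\Q^*$, predensity of $A_D$ in $\Q$ yields some $d\in A_D$ compatible with $q$ in $\Q$ (compatibility being absolute between $W$ and $V$), and by elementarity of $N$ the witness of compatibility in $\Q$ can be found inside $N\cap\Q=\Q^*$. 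Applying $(\ast)$ to $\Q^*$ with the dense sets $A_D^*=\{r\in\Q^*:\exists d\in A_D,\ r\leq d\}$ yields a filter $F\subseteq\Q^*$ meeting every $A_D^*$, and the upward closure of $F$ in $\Q$ is a filter in $\Q$ meeting every $D$.

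The central obstacle is that the classical reduction of \ma to posets of size less than $\c$ produces a small suborder in $V$, whereas \grma demands one lying in the ground model $W$. The resolution is two-step: first use the ccc covering property of $V/W$ to enclose the relevant elements in a set $Y\in W$, then perform a L\"owenheim--Skolem argument entirely inside $W$ around $Y$ to produce $\Q^*$.
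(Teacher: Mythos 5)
Your argument is correct, and it reaches the same destination as the paper's proof (a small elementary subposet \(\Q^*=\Q\cap N\) lying in the ground model, to which \((\ast)\) can be applied) by a genuinely different route. The paper takes an elementary submodel \(X\prec H_{\theta^+}^W\) \emph{in \(W\)} containing \(\P\)-\emph{names} for the dense sets, passes to \(X[G]\), verifies Tarski--Vaught to see \(X[G]\prec H_{\theta^+}^{W[G]}\) (which makes the restricted dense sets \(D_\alpha\cap X[G]\) dense in \(\Q^*\) for free), and then uses \(X\)-genericity of \(G\) (from the ccc of \(\P\)) to conclude \(\Q\cap X[G]=\Q\cap X\in W\). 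You instead never form \(X[G]\): you first shrink each dense set to a countable maximal antichain using the ccc of \(\Q\) in \(V\), cover the union of these antichains by a small \(Y\in W\) via the nice-name covering property of ccc extensions, and then run the L\"owenheim--Skolem argument entirely inside \(W\) around \(Y\). The price you pay is having to re-verify by hand that the \(A_D\) are predense in \(\Q^*\), which you do correctly via absoluteness of compatibility and elementarity of \(N\). One step that deserves an explicit word: ``any antichain of \(\Q^*\) is an antichain of \(\Q\)'' is not true for arbitrary suborders (two elements can be compatible in \(\Q\) but have no common lower bound in \(\Q^*\)); it holds here precisely because \(N\prec H_\theta^W\) gives \(\Q^*\prec\Q\), and elementarity of this pair is absolute to \(V\). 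Since you invoke exactly this elementarity two sentences later for the compatibility witnesses, the ingredient is present, but the ccc claim should cite it rather than the bare inclusion \(\Q^*\subseteq\Q\). With that clarification the proof is complete, and it is arguably more elementary than the paper's, at the cost of using the ccc of \(\Q\) in \(V\) at an extra point where the paper needs only the ccc of \(\P\).
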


\begin{proof}
Assume \(V\) satisfies (\(\ast\)) and let \(\Q\in W\) be a poset which is ccc in \(V\) and 
\(\D=\set{D_\alpha}{\alpha<\kappa}\in V\)
a family of \(\kappa<\c^V\) many dense subsets of \(\Q\). Let \(V=W[G]\) for some
\(W\)-generic \(G\subseteq \P\) and let \(\dot{D}_\alpha\in W\) be \(\P\)-names for the
\(D_\alpha\). Choose \(\theta\) large enough so that \(\P,\Q\) and all of the 
\(\dot{D}_\alpha\) are in \(H_{\theta^+}^W\).
We can then find an \(X\in W\) of size at most \(\kappa\) such that 
\(X\prec H_{\theta^+}^W\) and \(X\) contains \(\P,\Q\) and the 
\(\dot{D}_\alpha\).

Now let \[X[G]=\set{\tau^G}{\text{\(\tau \in X\) is a \(\P\)-name}}\in W[G]\]
We can verify the Tarski-Vaught criterion to show that \(X[G]\) is an
elementary substructure of \(H_{\theta^+}^W[G]=H_{\theta^+}^{W[G]}\). Specifically,
suppose that \(H_{\theta^+}^W[G]\models\exists x\colon\varphi(x,\tau^G)\)
for some \(\tau\in X\). Let \(S\) be the set of conditions \(p\in\P\) which force
\(\exists x\colon\varphi(x,\tau)\). Since \(S\) is definable from the parameters
\(\P\) and \(\tau\) we get \(S\in X\). Let \(A\in X\) be a antichain, maximal
among those contained in \(S\). By mixing over \(A\) we can obtain a name
\(\sigma\in X\) such that \(p\forces \varphi(\sigma,\tau)\) for
any \(p\in A\) and it follows that
\(H_{\theta^+}^W[G]\models\varphi(\sigma^G,\tau^G)\), which completes the verification.

Now let \(\Q^*=\Q\cap X[G]\) and \(D_\alpha^*=D_\alpha\cap X[G]\). Then
\(\Q^*\prec \Q\) and it follows that \(\Q^*\) is ccc (in \(W[G]\)) and that 
\(D_\alpha^*\) is dense in \(\Q^*\) for any \(\alpha<\kappa\). Furthermore, \(\Q^*\) has
size at most \(\kappa\). Finally, since \(\P\) is ccc,
the filter \(G\) is \(X\)-generic and so \(\Q^*=\Q\cap X[G]=\Q\cap X\) is an element
of \(W\). If we now apply \((\ast)\) to \(\Q^*\) and \(\mathcal{D}^*=\set{D_\alpha^*}{\alpha<\kappa}\), we find in \(W[G]\) a filter \(H\subseteq \Q^*\) intersecting every \(D_\alpha^*\), and
thus every \(D_\alpha\). Thus \(H\) generates a \(\mathcal{D}\)-generic filter on \(\Q\).
\end{proof}

%
%

The reader has likely noticed that the proof of 
lemma~\ref{lemma:grMASmallPosets} is somewhat more involved than the proof of
the analogous result for Martin's Axiom. The argument there hinges on the
straightforward observation that elementary subposets of ccc posets are themselves
ccc. While that remains true in our setting, of course, matters are made more difficult 
since we require that all of our posets come from a ground model that may not
contain the dense sets under consideration.
It is therefore not at all clear that taking
appropriate elementary subposets will land us in the ground model and a slightly more
elaborate argument is needed.

Let us point out a deficiency in the definition of \grma. As we have described 
it, the principle posits the existence of a ground model for the universe, a 
considerable global assumption. On the other hand, lemma~\ref{lemma:grMASmallPosets} 
suggests that the 
operative part of the axiom is, much like Martin's Axiom, a statement about \(H_\c\). 
This discrepancy allows for some undesirable phenomena. For example, 
Reitz~\cite{Reitz2007:GroundAxiom} shows that it is possible to perform arbitrarily 
closed class forcing over a given model and obtain a model of the ground axiom,
the assertion that the universe is not a nontrivial set forcing extension over any
ground model at all. This 
implies that there are models which have the same \(H_\c\) as a model of the
grounded Martin's Axiom but which fail to satisfy it simply because they have
no nontrivial ground models at all. 
To avoid this situation we can weaken the definition of \grma in a technical way.

\begin{definition}
The \emph{local grounded Martin's Axiom} asserts that there are a cardinal
\(\kappa\geq\c\) and a transitive \(\mathrm{ZFC}^-\) model 
\(M\subseteq H_{\kappa^+}\) such that
\(H_{\kappa^+}\) is a ccc forcing extension of \(M\) and \(V\) satisfies the
conclusion of Martin's Axiom for posets \(\Q\in M\) which are still ccc in \(V\).
\end{definition}

Of course, if the grounded Martin's Axiom holds, over the ground model \(W\) via
the forcing notion \(\P\), then its 
local version holds as well. We can simply take \(\kappa\) to be large enough
so that \(M=H_{\kappa^+}^W\) contains \(\P\) and that \(M[G]=H_{\kappa^+}^V\).
One should view the local version of the axiom as capturing all of the relevant
combinatorial effects of \grma (which, as we have seen, only involve \(H_\c\)),
while disentangling it from the structure of the universe higher up.

We now aim to give a model where the Martin's Axiom fails but the grounded version
holds. The idea is to imitate the Solovay-Tennenbaum argument, but to only use ground
model posets in the iteration. While it is then relatively clear that \grma will hold
in the extension, a further argument is needed to see that \ma itself fails. 
The key will be a kind of product analysis, given in the next few lemmas. We will show
that the iteration of ground model posets, while not exactly
a product, is close enough to a product to prevent Martin's Axiom from holding in
the final extension by a result of Roitman. An extended version of this argument will
also yield the consistency of \grma with a singular continuum.

\begin{lemma}
\label{lemma:TwoStepIterationDecidesccc}
Let \(\Q_0\) and \(\R\) be posets and \(\tau\) a \(\Q_0\)-name for a poset
such that \(\Q_0*\tau\) is ccc and
\(\Q_0\forces\textup{``if \(\check{\R}\) is ccc then \(\tau=\check{\R}\)''}\).
Furthermore, suppose that \(\Q_0*\tau\forces\textup{``\(\check{\Q}_0\) is ccc''}\). Then either
\(\Q_0\forces\textup{``\(\check{\R}\) is ccc''}\) or \(\Q_0\forces\textup{``\(\check{\R}\) is not ccc''}\).
\end{lemma}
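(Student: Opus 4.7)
The plan is to argue by contradiction: suppose there are incompatible conditions $p_0, p_1 \in \Q_0$ with $p_0$ forcing $\check{\R}$ to be ccc and $p_1$ forcing $\check{\R}$ to be non-ccc. Since $p_0 \forces \tau = \check{\R}$, the restriction of $\Q_0 * \tau$ below $(p_0, \dot{1}_\tau)$ is isomorphic to the product $(\Q_0 \rest p_0) \times \R$, and is ccc since $\Q_0 * \tau$ is. From this one reads off that $\R$ is ccc in $V$ (any antichain in $\R$ lifts to an antichain $\{(p_0, r_\alpha)\}$ in the product) and that $\Q_0$ itself is ccc in $V$ (via the antichain-preserving map $p \mapsto (p, \dot{1}_\tau)$ into the ccc iteration $\Q_0 * \tau$).

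Next I would show that the full product $\Q_0 \times \R$ fails to be ccc in $V$. The restriction $\Q_0 \rest p_1$ is ccc, because any antichain in it is an antichain in $\Q_0$; and by hypothesis it forces $\check{\R}$ to be non-ccc, so by the standard product characterisation $(\Q_0 \rest p_1) \times \R$ is not ccc. A witnessing antichain is also an antichain in $\Q_0 \times \R$, so the full product is not ccc either. Applying the product characterisation in the opposite direction, with both $\R$ and $\Q_0$ now known to be ccc in $V$, yields a condition $r^* \in \R$ that forces $\check{\Q}_0$ to be non-ccc.

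For the final contradiction, I would take a $V$-generic $G_0 * G_1 \subseteq \Q_0 * \tau$ containing the condition $(p_0, \check{r}^*)$. Below $p_0$ the iteration is genuine product forcing, so the product lemma splits this generic as $G_0 \times G_1$, with $G_1$ being $V$-generic for $\R$ and containing $r^*$, and $G_0$ being $V[G_1]$-generic for $\Q_0 \rest p_0$. By the choice of $r^*$, $\Q_0$ carries an uncountable antichain $A$ already in $V[G_1]$. Since $(\Q_0 \rest p_0) \times \R$ is ccc in $V$, the further forcing $\Q_0 \rest p_0$ is ccc in $V[G_1]$ and preserves $\omega_1$, so $A$ remains an uncountable antichain in $\Q_0$ throughout $V[G_0 * G_1]$, contradicting the hypothesis that $\Q_0 * \tau$ forces $\check{\Q}_0$ to be ccc. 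The step I expect to require the most care is the extraction of $r^*$ via the product lemma in the less familiar direction (with $\R$ on the left), together with the observation that forcing below $(p_0, \check{r}^*)$ simultaneously places $r^*$ and $p_0$ into the correct coordinates of the decomposed generic so that the uncountable antichain produced by $r^*$ survives into the extension where the hypothesis wants $\Q_0$ to be ccc.
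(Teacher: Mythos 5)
Your proof is correct and follows essentially the same route as the paper's: argue by contradiction, use the product characterisation of ccc in both directions to extract a condition $r^*\in\R$ forcing $\check{\Q}_0$ to be non-ccc, and then force below the combined condition $(p_0,\check{r}^*)$ to contradict the hypothesis that $\Q_0*\tau$ forces $\check{\Q}_0$ to be ccc. Your final paragraph is in fact slightly more careful than the paper's, since you explicitly note that the uncountable antichain found in $V[G_1]$ survives the further ccc forcing by $\Q_0\rest p_0$.
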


\begin{proof}
Suppose the conclusion fails, so that there are conditions \(q_0,q_1\in\Q_0\) which
force \(\check{\R}\) to either be or not be ccc, respectively. It follows that
\(\Q_0\rest q_1\times\R\) is not ccc. Switching the factors, there must be a condition
\(r\in\R\) forcing that \(\check{\Q}_0\rest\check{q}_1\) is not ccc. Now let \(G*H\)
be generic for \(\Q_0*\tau\) with \((q_0,\check{r})\in G*H\) (note that 
\((q_0,\check{r})\) is really a condition since \(q_0\forces\tau=\check{\R}\)).
Consider the extension \(V[G*H]\). On the one hand \(\Q_0\) must be ccc there, since
this was one of the hypotheses of our statement, but on the other hand \(\Q_0\rest q_1\) 
is not ccc there since \(r\in H\) forces this.
\end{proof}

\begin{lemma}
\label{lemma:FSIterationFactorsAsProduct}
Let \(\P=\langle \P_\alpha,\tau_\alpha;\alpha<\gamma\rangle\), with \(\gamma>0\), be a finite-support ccc
iteration such that for each \(\alpha\) there is some poset \(\Q_\alpha\) for which
\[
\P_\alpha\forces\textup{``if \(\check{\Q}_\alpha\) is ccc then \(\tau_\alpha=\check{\Q}_\alpha\) and \(\tau_\alpha\) is trivial otherwise''}
\]
Furthermore assume that \(\P\forces\textup{``\(\check{\Q}_0\) is ccc''}\). Then
\(\P\) is forcing equivalent to the product \(\Q_0\times\overline{\P}\) for some
poset \(\overline{\P}\).
\end{lemma}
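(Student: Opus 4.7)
The plan is to induct on $\gamma$, simultaneously constructing the parallel finite-support iteration $\overline{\P}$ and exhibiting coherent isomorphisms $\P_\alpha \cong \Q_0 \times \overline{\P}_\alpha$ at each stage. First I would observe that $\Q_0$ must already be ccc in $V$: any uncountable antichain in $\Q_0$ would remain an uncountable antichain in the ccc extension $V^\P$, contradicting the hypothesis. Since $\P_0$ is trivial this forces $\tau_0 = \check{\Q}_0$, settling the base case with $\overline{\P}_1$ trivial and $\P_1 = \Q_0$.

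For the successor step from $\alpha$ to $\alpha+1$, the inductive isomorphism $\P_\alpha \cong \Q_0 \times \overline{\P}_\alpha$ lets me rewrite the $\P_\alpha$-name $\tau_\alpha$ as a $\overline{\P}_\alpha$-name $\sigma_\alpha$ for a $\Q_0$-name of a poset. The strategy is to apply Lemma~\ref{lemma:TwoStepIterationDecidesccc} inside the extension $V[\overline{G}_\alpha]$, with $\Q_0, \Q_\alpha, \sigma_\alpha$ in the roles of that lemma's $\Q_0, \R, \tau$. The three hypotheses verify as follows: $\Q_0 * \sigma_\alpha$ is ccc in $V[\overline{G}_\alpha]$ because $\P_{\alpha+1}$ is ccc (as an initial segment of the ccc iteration $\P$) and factors as $\overline{\P}_\alpha * (\Q_0 * \sigma_\alpha)$; the name constraint on $\sigma_\alpha$ is just the original constraint on $\tau_\alpha$ read through the isomorphism; and $\Q_0 * \sigma_\alpha \Vdash \check{\Q}_0$ is ccc follows from the global assumption $\P \Vdash \check{\Q}_0$ ccc, which descends to $\P_{\alpha+1}$ since any condition witnessing otherwise would extend trivially to a condition in $\P$. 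The lemma then tells us that in $V[\overline{G}_\alpha]$ the poset $\Q_0$ decides the cccness of $\Q_\alpha$, so $\sigma_\alpha$ is forced by the trivial condition to equal either $\check{\Q}_\alpha$ or the trivial poset, in either case a ground-model poset $\R_\alpha \in V[\overline{G}_\alpha]$ that does not depend on the $\Q_0$-generic. Hence $\Q_0 * \sigma_\alpha \cong \Q_0 \times \R_\alpha$ in $V[\overline{G}_\alpha]$, and a standard product-iteration shuffle yields $\P_{\alpha+1} \cong \overline{\P}_\alpha * (\Q_0 \times \R_\alpha) \cong \Q_0 \times (\overline{\P}_\alpha * \R_\alpha)$; I set $\overline{\P}_{\alpha+1} = \overline{\P}_\alpha * \R_\alpha$.

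At a limit $\lambda$, the $\overline{\P}_\alpha$ form a coherent finite-support direct system, and since $\Q_0$ contributes only at coordinate $0$, the finite-support limit of the $\P_\alpha$ factors as $\Q_0 \times \overline{\P}_\lambda$, where $\overline{\P}_\lambda$ is the finite-support limit of the $\overline{\P}_\alpha$. The main obstacle is ensuring that $\sigma_\alpha$, which is a priori a $(\Q_0 \times \overline{\P}_\alpha)$-name, is in fact computable from $\overline{G}_\alpha$ alone, so that the successor-stage product factorization is well-defined; this independence from the $\Q_0$-generic is precisely the content of Lemma~\ref{lemma:TwoStepIterationDecidesccc}, and it is the key mechanism that converts the iteration into a product.
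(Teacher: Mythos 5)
Your proposal is correct and follows essentially the same route as the paper: induct on the length, use Lemma~\ref{lemma:TwoStepIterationDecidesccc} in the intermediate extension \(V[\overline{G}_\alpha]\) to see that the \(\Q_0\)-generic cannot influence which iterand appears at the next stage, factor \(\Q_0*\sigma_\alpha\) as \(\Q_0\times\R_\alpha\), shuffle the product past the iteration, and take direct limits using coherence. The paper is simply more fastidious about the two points you yourself flag as the obstacles: it works with dense embeddings rather than literal isomorphisms (arranging Kunen-style two-step iterations and full names so that the relevant embeddings really are dense) and it verifies explicitly that the stage-\(\alpha\) embeddings extend one another so that the limit step goes through.
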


Before we give the (technical) proof, let us provide some intuition for this lemma.
We can define the iteration \(\overline{\P}\) in the same way as \(\P\) (i.e.\ using the
same \(\Q_\alpha\)) but skipping the first step of forcing.
The idea is that, by lemma~\ref{lemma:TwoStepIterationDecidesccc}, the posets which
appear in the iteration \(\P\) do not depend on the first stage of forcing \(\Q_0\).
We thus expect that generics \(G\subseteq\P\) will correspond exactly to generics
\(H\times\overline{G}\subseteq \Q_0\times\overline{\P}\), since the first stage \(G_0\)
of the generic \(G\) does not affect the choice of posets in the rest of
the iteration.
 
\begin{proof}
We show the lemma by induction on \(\gamma\), the length of the iteration \(\P\).
In fact, we shall work with a stronger induction hypothesis. Specifically, we shall show
that for each \(\alpha<\gamma\) there is a poset \(\overline{\P}_\alpha\) such that
\(\Q_0\times\overline{\P}_\alpha\) embeds densely into \(\P_\alpha\), that
the \(\overline{\P}_\alpha\) form the initial segments of a finite-support iteration
and that the dense embeddings extend one another. For the
purposes of this proof we shall take all two-step iterations to be in the style of
Kunen, i.e.\ the conditions in \(\P*\tau\) are pairs \((p,\sigma)\) such that
\(p\in\P\) and \(\sigma\in\dom(\tau)\) and \(p\forces\sigma\in\tau\). Furthermore
we shall assume that the \(\tau_\alpha\) are full names. These assumptions make no
difference for the statement of the lemma, but ensure that certain embeddings will in
fact be dense.

Let us start with the base case \(\gamma=2\), when \(\P=\Q_0*\tau_1\). 
By lemma~\ref{lemma:TwoStepIterationDecidesccc} whether or not \(\check{\Q}_1\) is ccc
is decided by every condition in \(\Q_0\). But then, by our assumption on \(\tau_1\),
if \(\Q_0\) forces \(\check{\Q}_1\) to be ccc then \(\tau_1\) is forced to be equal
to \(\check{\Q}_1\) and \(\Q_0\times\Q_1\) embeds densely into \(\P\), and otherwise
\(\tau_1\) is forced to be trivial and \(\Q_0\) embeds densely into \(\P\). Depending
on which is the case, we can thus take either \(\overline{\P}_2=\Q_1\) or 
\(\overline{\P}_2=1\).

For the induction step let us assume that the stronger induction hypothesis holds for 
iterations of length \(\gamma\) and show that it holds for iterations of length 
\(\gamma+1\).
Let us write \(\P=\P_\gamma*\tau_\gamma\). By the induction hypothesis there is
a \(\overline{\P}_\gamma\) such that \(\Q_0\times\overline{\P}_\gamma\) embeds densely
into \(\P_\gamma\).

Before we give the details, let us sketch the string of equivalences that will yield
the desired conclusion. We have
\[
\P\equiv (\Q_0\times\overline{\P}_\gamma)*\bar{\tau}_\gamma \equiv
(\overline{\P}_\gamma\times\Q_0)*\bar{\tau}_\gamma \equiv
\overline{\P}_\gamma*(\check{\Q}_0*\bar{\tau}_\gamma)
\]
Here \(\bar{\tau}_\gamma\) is the \(\Q_0\times\overline{\P}_\gamma\)-name (or
\(\overline{\P}_\gamma\times\Q_0\)-name) resulting from pulling back the 
\(\P_\gamma\)-name \(\tau_\gamma\) along the dense embedding provided by the induction
hypothesis. We will specify what exactly we mean by 
\(\check{\Q}_0*\bar{\tau}_\gamma\) later.

We can apply the base step of the induction to the iteration 
\(\check{\Q}_0*\bar{\tau}_\gamma\)
in \(V^{\overline{\P}_\gamma}\) and obtain a \(\overline{\P}_\gamma\)-name
\(\dot{\R}\) such that \(\check{\Q}_0\times\dot{\R}\) is forced to densely embed into
\(\check{\Q}_0*\bar{\tau}_\gamma\). We can then continue the chain above with
\[
\overline{\P}_\gamma*(\check{\Q}_0*\bar{\tau}_\gamma) \equiv
\overline{\P}_\gamma*(\check{\Q}_0\times\dot{\R}) \equiv
\overline{\P}_\gamma*(\dot{\R}\times\check{\Q}_0) \equiv
\overline{\P}_{\gamma+1}\times\Q_0
\]
where \(\overline{\P}_{\gamma+1}=\overline{\P}_\gamma*\dot{\R}\). While this is 
apparently enough to finish the successor step for the bare statement of the lemma,
we wish to preserve the stronger induction hypothesis, and this requires a bit more work.

We first pick a specific \(\overline{\P}_\gamma\)-name for \(\check{\Q}_0*\bar{\tau}_\gamma\).
Let
\[
\tau=\set{((\check{q}_0,\rho),\bar{p})}{q_0\in\Q_0, \rho\in\dom(\bar{\tau}_\gamma),\bar{p}\in\overline{\P}_\gamma,(\bar{p},q_0)\forces \rho\in\bar{\tau}_\gamma}
\]
Then \(\overline{\P}_\gamma\forces \tau=\check{\Q}_0*\tau_\gamma\). Next we pin down
\(\dot{\R}\) more. Note that \(\overline{\P}_\gamma\) forces that \(\check{\Q}_0\)
decides whether \(\check{\Q}_\gamma\) is ccc or not by 
lemma~\ref{lemma:TwoStepIterationDecidesccc}. Let 
\(A=A_0\cup A_1\subseteq\overline{\P}_\gamma\) be a maximal antichain such that each
\(\bar{p}\in A_0\) forces \(\check{\Q}_\gamma\) to not be ccc and each \(\bar{p}\in A_1\)
forces it to be ccc. Now let
\[
\dot{\R}=\set{(\check{1},\bar{p})}{\bar{p}\in A_1}\cup \set{(\check{q},\bar{p})}{q\in\Q_\gamma,\bar{p}\in A_0}
\]
Observe that \(\dot{\R}\) has the properties we require of it: it is forced by 
\(\overline{\P}_\gamma\) that \(\dot{\R}=\check{\Q}_\gamma\) if \(\check{\Q}_0\) forces
that \(\check{\Q}_\gamma\) is ccc, and \(\dot{\R}\) is trivial otherwise, and that
\(\check{\Q}_0\times \dot{\R}\) embeds densely into \(\tau\).

Finally, let us define \(\overline{\P}_{\gamma+1}=\overline{\P}_\gamma*\dot{\R}\).
We can now augment the equivalences given above with dense embeddings:

\begin{itemize}
\item The embedding \(\overline{\P}_{\gamma+1}\times\Q_0\hookrightarrow
\overline{\P}_\gamma*(\check{\Q}_0\times\dot{\R})\) is clear.

\item To embed \(\overline{\P}_\gamma*(\check{\Q}_0\times\dot{\R})\) into 
\(\overline{\P}_\gamma*\tau\) we can send \((\bar{p},(\check{q}_0,\rho))\) to \((\bar{p},
(\check{q}_0,\rho'))\) where \(\rho'\) is some element of \(\dom(\bar{\tau}_\gamma)\) for which
\((\bar{p},q_0)\forces \rho=\rho'\) (this is where the fullness of \(\tau_\gamma\) is
needed).

\item With our specific choice of the name \(\tau\) we in fact get an isomorphism
between \(\overline{\P}_\gamma*\tau\) and 
\((\overline{\P}_\gamma\times\Q_0)*\bar{\tau}_\gamma\), given by sending
\((\bar{p},(\check{q}_0,\rho))\) to \(((\bar{p},q_0),\rho)\).

\item The final embedding from \((\overline{\P}_\gamma\times\Q_0)*\bar{\tau}_\gamma\) into 
\(\P\) is given by the induction hypothesis.
\end{itemize}

After composing these embeddings, we notice that the first three steps essentially fixed 
the \(\overline{\P}_\gamma\) part of the condition and the last step fixed the
\(\tau_\gamma\) part. It follows that the embedding 
\(\overline{\P}_{\gamma+1}\times\Q_0\hookrightarrow \P_{\gamma+1}\) we constructed
extends the embedding
\(\overline{\P}_\gamma\times\Q_0\hookrightarrow \P_\gamma\) given by the induction
hypothesis. This completes the successor step of the induction.

We now look at the limit step of the induction. The induction hypothesis gives us
for each \(\alpha<\gamma\) a poset \(\overline{\P}_\alpha\) and a dense embedding
\(\overline{\P}_\alpha\times\Q_0\hookrightarrow \P_\alpha\) and we also know that
the \(\overline{\P}_\alpha\) are the initial segments of a finite-support iteration
and that the dense embeddings extend each other. If we now let \(\overline{\P}_\gamma\)
be the direct limit of the \(\overline{\P}_\alpha\), we can easily find a dense embedding
of \(\overline{\P}_\gamma\times\Q_0\) into \(\P_\gamma\). Specifically, given a condition
\((\bar{p},q)\in\overline{\P}_\gamma\times\Q_0\), we can find an \(\alpha<\gamma\)
such that \(\bar{p}\) is essentially a condition in \(\overline{\P}_\alpha\), since
\(\overline{\P}_\gamma\) is the direct limit of these. Now we can map
\((\bar{p},q)\) using the stage \(\alpha\) dense embedding, landing in
\(\P_\alpha\) and interpreting this as an element of \(\P_\gamma\). This map
is independent of the particular choice of \(\alpha\) since all the dense embeddings
extend one another and it is itself a dense embeddings since all the previous stages
were.
\end{proof}

\begin{theorem}
\label{thm:CanonicalgrMA}
Let \(\kappa>\omega_1\) be a cardinal of uncountable cofinality satisfying 
\(2^{<\kappa}=\kappa\). Then there is a ccc forcing extension that satisfies 
\(\grma + \lnot\ma + \c=\kappa\).
\end{theorem}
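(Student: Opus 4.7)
The plan is to imitate the Solovay-Tennenbaum construction inside \(W\), restricting to ground-model iterands, and then exploit the product structure supplied by lemma~\ref{lemma:FSIterationFactorsAsProduct} together with a theorem of Roitman to force the failure of \ma. In \(W\) I enumerate a sequence \(\langle \Q_\alpha : \alpha<\kappa\rangle\) listing each poset in \(H_\kappa^W\) of size less than \(\kappa\) cofinally often (possible since \(|H_\kappa^W|=2^{<\kappa}=\kappa\)), arranging also that \(\Q_0=\Add(\omega,1)\). I then build the finite-support iteration \(\P=\langle \P_\alpha,\tau_\alpha : \alpha<\kappa\rangle\) with each \(\tau_\alpha\) chosen so that \(\P_\alpha\forces\tau_\alpha=\check{\Q}_\alpha\) whenever \(\check{\Q}_\alpha\) is still ccc, and so that \(\tau_\alpha\) is forced to be trivial otherwise. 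This is precisely the setup of lemma~\ref{lemma:FSIterationFactorsAsProduct}, and standard arguments give that \(\P\) is ccc, \(|\P|=\kappa\), and \(\c^V=\kappa\) in the extension \(V=W[G]\).

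For \grma in \(V\) I run the usual reflection. Given \(\Q\in W\) ccc in \(V\) of size less than \(\kappa\) and a family \(\mathcal{D}=\{D_\alpha:\alpha<\lambda\}\in V\) of \(\lambda<\c^V\) dense subsets, the ccc of \(\P\) combined with the uncountable cofinality of \(\kappa\) yields a bounded stage \(\beta<\kappa\) at which every \(D_\alpha\) already has a \(\P_\beta\)-name. Since ccc is downward absolute, \(\Q\) remains ccc in each intermediate extension \(V^{\P_\gamma}\), and the cofinal-appearance bookkeeping produces some \(\gamma>\beta\) with \(\Q_\gamma=\Q\), at which stage the iteration genuinely forces with \(\Q\); the resulting generic filter lies in \(V\) and meets every \(D_\alpha\). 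Lemma~\ref{lemma:grMASmallPosets} then upgrades this to the full axiom.

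The heart of the proof is the failure of \ma. Since \(\Q_0=\Add(\omega,1)\) is \(\sigma\)-centred, hence ccc in any extension, the hypothesis \(\P\forces\textup{``\(\check{\Q}_0\) is ccc''}\) is automatic, and lemma~\ref{lemma:FSIterationFactorsAsProduct} produces a poset \(\overline{\P}\) with \(\P\equiv\Q_0\times\overline{\P}\). Consequently \(V\) can be written as \(W[\bar G][c]\), a single-Cohen-real extension of the intermediate model \(W[\bar G]\), whereupon a theorem of Roitman tells us that \(\ma_{\omega_1}\) fails in any such Cohen extension. The main obstacle in the argument is verifying that the iteration really fits the somewhat rigid hypothesis of lemma~\ref{lemma:FSIterationFactorsAsProduct}, which requires each \(\tau_\alpha\) to dichotomise cleanly between \(\check{\Q}_\alpha\) and a trivial poset. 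This is ultimately a bookkeeping matter, but it is precisely what enables the product decomposition that, in turn, drives the failure of \ma.
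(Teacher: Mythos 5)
Your construction, the verification that \(\c=\kappa\), and the derivation of \(\lnot\ma\) via lemma~\ref{lemma:FSIterationFactorsAsProduct} and Roitman's theorem all match the paper's argument. The gap is in your verification of \grma, specifically the claim that ``the ccc of \(\P\) combined with the uncountable cofinality of \(\kappa\) yields a bounded stage \(\beta<\kappa\) at which every \(D_\alpha\) already has a \(\P_\beta\)-name.'' The theorem only assumes \(\cf(\kappa)>\omega\), not that \(\kappa\) is regular, and for singular \(\kappa\) this boundedness claim is false: a nice name for the family \(\mathcal{D}\) has size \(\lambda<\kappa\) and so mentions at most \(\lambda\) many coordinates of the finite-support iteration, but a set of \(\lambda\) many coordinates need not be bounded in \(\kappa\) once \(\lambda\geq\cf(\kappa)\) (e.g.\ \(\kappa=\aleph_{\omega_1}\), \(\lambda=\omega_1\)). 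Uncountable cofinality only bounds the supports of names for \emph{countable} objects, which is why it suffices for the nice-name computation of the continuum but not here. Your argument as written therefore proves the theorem only for regular \(\kappa\), and the singular case is exactly what the subsequent corollaries (a singular continuum, \(2^{<\c}>\c\)) depend on.

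The repair is to use lemma~\ref{lemma:FSIterationFactorsAsProduct} a second time, now in the \grma verification rather than only for \(\lnot\ma\). Since the name \(\dot{D}\) touches only \(\lambda<\kappa\) coordinates and \(\Q\) is considered at \(\kappa\) many stages, one can pick a stage \(\delta\) at which a copy of \(\Q\) is scheduled, which lies outside the support of \(\dot{D}\) and beyond the support of a condition \(p\in G\) forcing \(\Q\) to be ccc. Working in \(V[G_\delta]\), the tail \(\P\rest[\delta,\kappa)\) again satisfies the hypotheses of the factoring lemma, so below \(p\) the whole iteration factors as \((\P_\delta*\overline{\P})\times\Q\). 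The set \(D\) then lives in \(V[G_\delta*\overline{G}]\), and the \(\Q\)-coordinate of the generic is \(\mathcal{D}\)-generic; lemma~\ref{lemma:grMASmallPosets} finishes as you indicate. In short: the product decomposition is not merely the engine of \(\lnot\ma\), it is also what lets you reorder the forcing so that the copy of \(\Q\) comes \emph{after} the dense sets even when they never appear at a bounded stage.
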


\begin{proof}
Fix a well-order \(\triangleleft\) of \(H_\kappa\) of length \(\kappa\); 
this can be done since
\(2^{<\kappa}=\kappa\). We can assume, without loss of generality, that the least element
of this order is the poset \(\Add(\omega,1)\). 
We define a length \(\kappa\) finite-support iteration
\(\P\) recursively: at stage \(\alpha\) we shall force with the next poset with respect 
to the order \(\triangleleft\) if that is ccc at that stage and with
trivial forcing otherwise. Let \(G\) be \(\P\)-generic.

Notice that any poset \(\Q\in H_\kappa\) occurs, up to isomorphism, unboundedly often in 
the well-order \(\triangleleft\). Specifically, we can first find an isomorphic copy 
whose universe is a set of ordinals bounded
in \(\kappa\) and then simply move this universe higher and higher up. In particular,
isomorphic copies of Cohen forcing \(\mathrm{Add}(\omega,1)\) appear unboundedly
often. Since these are ccc in a highly robust way (being
countable), they will definitely be forced with in the iteration
\(\P\). Therefore we have at least \(\kappa\) many reals in the extension \(V[G]\). 
Since the forcing is ccc of size \(\kappa\) and \(\kappa\) has uncountable 
cofinality, a nice-name argument shows that the continuum equals \(\kappa\) in the 
extension.

To see that \ma fails in \(V[G]\), notice that \(\P\) is exactly the type of iteration
considered in lemma~\ref{lemma:FSIterationFactorsAsProduct}. The lemma then implies
that \(\P\) is equivalent to \(\overline{\P}\times\Add(\omega,1)\) for some 
\(\overline{\P}\). Therefore the extension
\(V[G]\) is obtained by adding a Cohen real to some intermediate model.
But, as CH fails in the final extension, Roitman has shown 
in~\cite{Roitman1979:AddingRandomOrCohenRealEffectMA} that Martin's Axiom
must also fail there.

Finally, we show that the grounded Martin's Axiom holds in \(V[G]\) with
\(V\) as the ground model. Before we consider the general case let us look at
the easier situation when \(\kappa\) is regular. 
Thus, let \(\Q\in V\) be a poset of size less than
\(\kappa\) which is ccc in \(V[G]\) and \(\mathcal{D}\in V[G]\) a family of fewer
than \(\kappa\) many dense subsets of \(\Q\). We can assume without loss of 
generality that \(\Q\in H_\kappa\). Code the elements of \(\mathcal{D}\) into a
single set \(D\subseteq \kappa\) of size \(\lambda<\kappa\). Since \(\P\) is ccc, 
the set \(D\) has a nice \(\P\)-name \(\dot{D}\) of size \(\lambda\). Since the iteration 
\(\P\) has finite support, the set \(D\) appears before the end of the iteration.
As we have argued 
before, up to isomorphism, the poset \(\Q\) appears \(\kappa\) many times in the
well-order \(\triangleleft\).
Since \(\Q\) is ccc in \(V[G]\) and \(\P\) is ccc, posets isomorphic to \(\Q\) will
be forced with unboundedly often in the iteration \(\P\) and, therefore, eventually
a \(\mathcal{D}\)-generic will be added for \(\Q\).

If we now allow \(\kappa\) to be singular we run into the problem that the dense sets
in \(\mathcal{D}\) might not appear at any initial stage of the iteration \(\P\).
We solve this issue by using lemma~\ref{lemma:FSIterationFactorsAsProduct} to
factor a suitable copy of \(\Q\) out of the iteration \(\P\) and see it as coming
after the forcing that added \(\mathcal{D}\).

Let \(\Q,\mathcal{D},D,\lambda\) and \(\dot{D}\) be as before. As mentioned, it may
no longer be true that \(\mathcal{D}\) appears at some initial stage of the iteration.
Instead, note that, since \(\dot{D}\) has size \(\lambda\), there are at most
\(\lambda\) many indices \(\alpha\) such that some condition 
appearing in \(\dot{D}\) has a nontrivial \(\alpha\)-th coordinate.
It follows that there is a \(\delta<\kappa\) such that no 
condition appearing in \(\dot{D}\) has a nontrivial \(\delta\)-th coordinate
and the poset considered at stage \(\delta\) is isomorphic to \(\Q\).
Additionally, if we fix a condition \(p\in G\) forcing that
\(\Q\) is ccc in \(V[G]\), we can find such a \(\delta\) beyond the support of \(p\).
Now argue for a moment in \(V[G_\delta]\). 
In this intermediate extension the quotient iteration
\(\P^\delta=\P\rest [\delta,\kappa)\) is of the type considered in 
lemma~\ref{lemma:FSIterationFactorsAsProduct} and, since we chose \(\delta\) beyond
the support of \(p\), we also get that \(\P^\delta\) forces that \(\check{\Q}\) is ccc.
The lemma now implies that \(\P^\delta\) is equivalent to \(\overline{\P}\times\Q\)
for some \(\overline{\P}\). Moving back to \(V\), we can conclude that
\(\P\rest p\) factors as \(\P_\delta\rest p *(\overline{\P}\times\check{\Q})\equiv
(\P_\delta\rest p*\overline{\P})\times\Q\) and obtain the corresponding
generic \((G_\delta*\overline{G})\times H\). Furthermore, the name \(\dot{D}\) is
essentially a \(\P_\delta*\overline{\P}\)-name, since no condition in \(\dot{D}\) has
a nontrivial \(\delta\)-th coordinate. It follows that the set \(D\) appears
already in \(V[G_\delta*\overline{G}]\) and that the final generic
\(H\in V[G_\delta*\overline{G}][H]=V[G]\) is \(\mathcal{D}\)-generic for \(\Q\).
We have thus shown that \((*)\) from lemma~\ref{lemma:grMASmallPosets} holds in
\(V[G]\), which implies that the grounded Martin's Axiom also holds.
\end{proof}

We should reflect briefly on the preceding proof.
If we would have been satisfied with obtaining a model with a regular continuum,
the usual techniques would apply. Specifically, if \(\kappa\) were regular then all the 
dense sets in \(\mathcal{D}\) would have appeared by
some stage of the iteration, after which we would have forced with (a poset isomorphic 
to) \(\Q\), yielding the desired \(\mathcal{D}\)-generic. This approach, however, fails
if \(\kappa\) is singular, as small sets might not appear before the end of the 
iteration. 
Lemma~\ref{lemma:FSIterationFactorsAsProduct} was key in resolving this issue, allowing 
us to factor the iteration \(\P\) as a product and seeing the forcing \(\Q\) as happening
at the last stage, after the dense sets had already appeared.
The lemma implies that the iteration factors at any stage where
we considered an absolutely ccc poset (for example, we can factor out any Knaster
poset from the ground model). However, somewhat fortuitously, we can also factor out
any poset to which we might apply the grounded Martin's Axiom, at least below
some condition. There is no surrogate for lemma~\ref{lemma:FSIterationFactorsAsProduct}
for the usual Solovay-Tennenbaum iteration and indeed, Martin's Axiom implies that
the continuum is regular.

\begin{corollary}
\label{cor:grMAConsistentWithSuslinTrees}
The grounded Martin's Axiom is consistent with the existence of a Suslin tree.
\end{corollary}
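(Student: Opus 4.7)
The plan is to extract a Suslin tree directly from the model $V[G]$ produced by Theorem~\ref{thm:CanonicalgrMA}, leveraging the classical theorem (due to Shelah) that after adding a single Cohen real the extension always contains a Suslin tree.

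First I would invoke Theorem~\ref{thm:CanonicalgrMA} to obtain a ccc extension $V[G]$ satisfying $\grma + \c = \kappa$, with $V$ as the witnessing ground model. The crucial feature for the present application is that in the proof of that theorem, Lemma~\ref{lemma:FSIterationFactorsAsProduct} was used to exhibit the iteration $\P$ as a product $\Add(\omega,1) \times \overline{\P}$. Reading the resulting generic as a product generic, we may view $V[G]$ as $V[\overline{G}][c]$, where $\overline{G}$ is $\overline{\P}$-generic over $V$ and $c$ is a Cohen real over $V[\overline{G}]$.

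Next I would appeal to Shelah's theorem that the extension of any model by a single Cohen real contains a Suslin tree, applied to the model $V[\overline{G}]$ and the Cohen real $c$. This yields a Suslin tree $T \in V[\overline{G}][c] = V[G]$, constructed uniformly from $c$.

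Finally I would explain why the existence of $T$ is compatible with \grma holding in $V[G]$. Since $T$ is definable from the Cohen real $c$, it lies outside $V[\overline{G}]$ and in particular outside the ground model $V$. The grounded Martin's Axiom, by definition, only demands the conclusion of Martin's Axiom for posets that are elements of $V$, so it imposes no requirement whatsoever on $T$ and cannot force a generic branch through it. Hence $T$ remains Suslin in $V[G]$, giving the desired model. The only nontrivial input beyond Theorem~\ref{thm:CanonicalgrMA} is Shelah's Cohen-to-Suslin-tree construction; the rest is bookkeeping about which intermediate model the tree inhabits and noting that \grma only constrains ground-model posets.
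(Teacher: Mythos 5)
Your argument is correct and is essentially the paper's own proof: factor the iteration of Theorem~\ref{thm:CanonicalgrMA} as $\overline{\P}\times\Add(\omega,1)$ via Lemma~\ref{lemma:FSIterationFactorsAsProduct} and cite Shelah's theorem that adding a Cohen real adds a Suslin tree. (Your closing paragraph is superfluous: Shelah's result already says the tree is Suslin in $V[\overline{G}][c]=V[G]$ itself, so nothing needs to be ``preserved'' afterwards; the remark that \grma only constrains ground-model posets merely explains why its presence creates no conflict.)
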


\begin{proof}
We saw that the model of \grma constructed in the above proof was obtained by
adding a Cohen real to an intermediate extension. 
Adding that Cohen real also adds a Suslin tree by a result of
Shelah~\cite{Shelah1984:TakeSolovayInaccessibleAway}.
\end{proof}

A further observation we can make is that the cofinality of \(\kappa\) plays no
role in the proof of theorem~\ref{thm:CanonicalgrMA} beyond the obvious König's 
inequality requirement on the
value of the continuum. This allows us to obtain models of the grounded Martin's
Axiom with a singular continuum and violate cardinal arithmetic properties
which must hold in the presence of Martin's Axiom.

\begin{corollary}
The grounded Martin's Axiom is consistent with \(2^{<\c}>\c\).
\end{corollary}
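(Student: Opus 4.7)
The plan is to apply Theorem~\ref{thm:CanonicalgrMA} with a carefully chosen singular target cardinal and then extract \(2^{<\c}>\c\) essentially from König's inequality. Specifically, I would start in a ground model \(V\) satisfying \textup{GCH} (or at least the relevant instances) and set \(\kappa=\aleph_{\omega_1}\). Then \(\cf(\kappa)=\omega_1\) is uncountable, \(\kappa>\omega_1\), and \(2^{<\kappa}=\sup_{\mu<\kappa}2^\mu=\kappa\), so the hypotheses of Theorem~\ref{thm:CanonicalgrMA} are satisfied.

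By that theorem there is a ccc extension \(V[G]\) in which \grma holds and \(\c=\kappa=\aleph_{\omega_1}\). Since \(\P\) is ccc, cofinalities are preserved, so \(\cf(\c)^{V[G]}=\omega_1\). König's inequality then gives \(\c^{\cf(\c)}>\c\) in \(V[G]\), and since
\[
\c^{\omega_1}=(2^{\omega})^{\omega_1}=2^{\omega\cdot\omega_1}=2^{\omega_1},
\]
we conclude \(2^{\omega_1}>\c\). Because \(\omega_1<\aleph_{\omega_1}=\c\), the cardinal \(2^{\omega_1}\) is one of the values entering \(2^{<\c}\), so \(2^{<\c}\geq 2^{\omega_1}>\c\), as desired.

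There really is no serious obstacle here: the author has already done the hard work in Theorem~\ref{thm:CanonicalgrMA} by showing that \grma can be forced while allowing the continuum to be singular (the factoring argument via Lemma~\ref{lemma:FSIterationFactorsAsProduct} being the key point). Once singular values of \(\c\) with small cofinality are available, König's theorem automatically forces \(2^{<\c}\) above \(\c\), giving the desired violation of a standard consequence of \ma.
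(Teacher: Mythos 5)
Your proof is correct and follows essentially the same route as the paper: force \grma with a singular continuum via Theorem~\ref{thm:CanonicalgrMA} and then derive \(2^{<\c}>\c\) from König's inequality. The only cosmetic difference is that you fix the specific value \(\kappa=\aleph_{\omega_1}\) under GCH and use \(\c^{\cf(\c)}>\c\), whereas the paper observes more generally that \(2^{<\c}>\c\) holds in \emph{any} model with singular continuum (comparing cofinalities in \(\c\le 2^{\cf(\c)}\le 2^{<\c}\)); both are the same König argument.
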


\begin{proof}
Starting from some model, perform the construction of theorem~\ref{thm:CanonicalgrMA}
with \(\kappa\) singular. In the extension the continuum equals \(\kappa\).
But the desired inequality is true in any model where the continuum is singular:
of course \(\c=2^\omega\leq 2^{\cf(\c)}\leq 2^{<\c}\) is true but equalities cannot hold 
since the middle two cardinals have different cofinalities by König's inequality.
\end{proof}

On the other hand, assuming we start with a model satisfying GCH, the model of 
theorem~\ref{thm:CanonicalgrMA} will satisfy the best possible alternative
to \(2^{<\c}=\c\), namely \(2^{<\cf(\c)}=\c\). Whether this always happens remains open.

\begin{question}
\label{q:grMAWeakLuzin}
Does the grounded Martin's Axiom imply that \(2^{<\cf(\c)}=\c\)?
\end{question}
\note{Do we have \(2^{<\cf(\c)}=\c\)? Yes in canonical model, but not clear in
general; does not follow from \mac (blow up \(\c\) but stay below \(2^{\omega_1}\)).}

%
%

\section{The axiom's relation to other fragments of Martin's Axiom}

Let us now compare some of the combinatorial consequences of the grounded Martin's 
Axiom with those of the usual Martin's Axiom. We first make an easy observation.

\begin{proposition}
\label{prop:grMAImpliesMACountable}
The local grounded Martin's Axiom implies \mac.
\end{proposition}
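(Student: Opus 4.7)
\emph{Proof proposal.} My plan is to reduce \mac to \macoh and observe that local \grma yields \macoh directly. Indeed, $\Add(\omega,1)$ is defined absolutely from $\omega$, so it lies in the transitive $\mathrm{ZFC}^-$ model $M$ witnessing local \grma, and being countable it is ccc in $V$; hence local \grma applied to $\Add(\omega,1)$ gives \macoh in $V$.

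For the reduction, let $\Q$ be a countable poset and $\D$ a family of fewer than $\c$ dense subsets of $\Q$, and split into cases. If some $q_0\in\Q$ has the property that any two conditions below $q_0$ are compatible, then $\set{q\in\Q}{q\leq q_0}$ is downward-directed and its upward closure in $\Q$ is a filter meeting every dense subset of $\Q$ (by density applied to $q_0$); no axiom is needed. Otherwise, the separative quotient of $\Q$ is countable and atomless, hence densely embeds into the unique countable atomless separative Boolean algebra, itself the separative quotient of $\Add(\omega,1)$. Composing yields an order- and incompatibility-preserving map $\pi\colon\Q\to\Add(\omega,1)$ with dense image. For each $D\in\D$ the set $E_D=\set{p\in\Add(\omega,1)}{p\leq\pi(d)\text{ for some }d\in D}$ is dense in $\Add(\omega,1)$, and there are fewer than $\c$ such sets. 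Applying \macoh produces a filter $H\subseteq\Add(\omega,1)$ meeting every $E_D$, and the pullback $G=\set{q\in\Q}{\pi(q)\geq p\text{ for some }p\in H}$ is then a $\D$-generic filter on $\Q$.

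The main subtlety will be that $\Q$ itself need not lie in $M$: a countable poset is coded by a real, which could have been added by the ccc forcing that makes $V$ an extension of $M$, so we cannot feed $\Q$ directly into local \grma. This is resolved by the absoluteness of $\Add(\omega,1)$ together with the fact that, up to forcing equivalence, every countable atomless separative poset \emph{is} Cohen forcing, which does lie in $M$; so we can always replace $\Q$ by a poset in $M$ without losing any $\D$-generics.
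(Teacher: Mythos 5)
Your overall strategy is the same as the paper's: the paper's proof is just the observation that \(\Add(\omega,1)\) lies in \(M\) because its elements are coded by natural numbers, together with the (unstated, standard) reduction of \mac to Martin's Axiom for Cohen forcing. You correctly identify the real point --- that a countable poset \(\Q\) is coded by a real and so need not lie in \(M\), which is why one must pass to the absolutely definable poset \(\Add(\omega,1)\) --- and your case split (forcing-atom versus atomless) is fine. But your write-up of the reduction has a genuine gap at the last step. The set \(G=\set{q\in\Q}{\pi(q)\geq p\text{ for some }p\in H}\) is upward closed and \emph{linked} (any two of its elements are compatible in \(\Q\), since \(\pi\) preserves incompatibility), but it need not be a filter: given \(q_1,q_2\in G\) witnessed by \(p\in H\) with \(p\leq\pi(q_1),\pi(q_2)\), compatibility gives some \(r\leq q_1,q_2\) in \(\Q\), but nothing guarantees that \(\pi(r)\) lies above any element of \(H\) --- indeed \(\pi(r)\) may be incompatible with \(p\). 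This is the classical obstruction to transferring a generic filter backwards along a dense embedding, and it is why the standard proof either builds a tree-embedding \(e\colon\funcs{<\omega}{2}\to\Q\) in the \emph{other} direction (so that the image of the generic branch is a chain, which does generate a filter), or else adds, for each pair \(q_1,q_2\in\Q\), the dense set of conditions that either get below \(\pi(r)\) for some common extension \(r\) of \(q_1,q_2\) or are incompatible with one of \(\pi(q_1),\pi(q_2)\); this costs only countably many extra dense sets, so the count stays below \(\c\). As written, your proof produces a linked upward-closed set, not a \(\D\)-generic filter.

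A secondary inaccuracy: \(\Add(\omega,1)=\funcs{<\omega}{2}\) is already separative, and its separative quotient is a tree, not the countable atomless Boolean algebra; so your composition lands in that Boolean algebra (minus \(0\)), in which \(\Add(\omega,1)\) merely sits densely, rather than in \(\Add(\omega,1)\) itself. This is harmless --- the countable atomless Boolean algebra is equally absolutely coded and hence also an element of \(M\), so you could apply local \grma to it directly --- but the map \(\pi\) as described does not exist with codomain \(\Add(\omega,1)\).
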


\begin{proof}
Fix the cardinal \(\kappa\geq\c\) and the \(\mathrm{ZFC}^-\) ground model 
\(M\subseteq H_{\kappa^+}\) witnessing local \grma. 
Observe that the the model \(M\) contains the poset \(\mathrm{Add}(\omega,1)\), since
its elements are effectively coded by the natural numbers.
This poset is therefore always a valid target for local \grma.
\end{proof}

It follows from the above proposition that the (local) grounded Martin's Axiom
will have some nontrivial effects on the cardinal characteristics of the continuum.
In particular, we obtain the following.

\begin{corollary}
\label{cor:grMALargeCharacts}
The local grounded Martin's Axiom implies that the cardinals on the right side of 
Cichoń's diagram equal the continuum.
In particular, this holds for both the covering number for category 
\(\mathbf{cov}(\mathcal{B})\) and the reaping number \(\mathfrak{r}\).
\end{corollary}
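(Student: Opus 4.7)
The plan is to reduce to \mac via Proposition~\ref{prop:grMAImpliesMACountable} and then invoke standard ZFC inequalities involving $\mathbf{cov}(\mathcal{B})$. The key first step is the classical equivalence between \mac and $\mathbf{cov}(\mathcal{B})=\c$: a family of fewer than $\c$ many meager subsets of $2^\omega$ can be refined to $F_\sigma$-meager sets whose complements give fewer than $\c$ many dense open subsets of $2^{<\omega}$, and a filter meeting all of them produces a real avoiding every member of the original family; conversely, a \mac-style genericity requirement can be phrased as avoiding fewer than $\c$ many meager sets. So from local \grma we immediately obtain $\mathbf{cov}(\mathcal{B})=\c$.

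The remaining right-side invariants of Cichoń's diagram, namely $\mathfrak{d}$, $\mathbf{non}(\mathcal{N})$, $\mathbf{cof}(\mathcal{B})$, and $\mathbf{cof}(\mathcal{N})$, all lie between $\mathbf{cov}(\mathcal{B})$ and $\c$ in ZFC by the standard inequalities $\mathbf{cov}(\mathcal{B})\leq\mathfrak{d}$ and $\mathbf{cov}(\mathcal{B})\leq\mathbf{non}(\mathcal{N})$, together with the bounds of these two by the cofinalities and ultimately by $\c$; so all of them get pinned to $\c$. For the reaping number the relevant fact is the ZFC inequality $\mathbf{cov}(\mathcal{B})\leq\mathfrak{r}$, which I would verify directly: for each infinite $R\subseteq\omega$, both $\set{A\in 2^\omega}{A\cap R \text{ is finite}}$ and $\set{A\in 2^\omega}{R\setminus A \text{ is finite}}$ are meager in $2^\omega$, so the set of $A$ reaped by any given $R$ is meager, and fewer than $\mathbf{cov}(\mathcal{B})$ many candidates cannot reap all of $2^\omega$. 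I do not expect any real obstacle here — the substantive content is already contained in Proposition~\ref{prop:grMAImpliesMACountable}, and the rest is an appeal to well-known ZFC inequalities.
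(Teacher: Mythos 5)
Your proposal is correct and follows essentially the same route as the paper: reduce to \mac via Proposition~\ref{prop:grMAImpliesMACountable}, deduce \(\mathbf{cov}(\mathcal{B})=\c\) by a Baire-category argument, and handle the remaining right-side invariants and \(\mathfrak{r}\) from there. The only cosmetic difference is that you obtain \(\mathfrak{r}\geq\c\) from the ZFC inequality \(\mathbf{cov}(\mathcal{B})\leq\mathfrak{r}\) (splitting sets are comeager), whereas the paper applies \mac directly to \(\Add(\omega,1)\) to produce a single real splitting the given family --- the same combinatorial fact in two guises.
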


\begin{proof}
All of the given equalities follow already from \mac; we briefly summarize
the arguments from~\cite{Blass2010:CardinalCharacteristicsHandbook}.

The complement of any nowhere dense subsets of the real line is dense. It follows that, given
fewer than continuum many nowhere dense sets, we can apply \mac to obtain a real number
not contained in any of them. Therefore the real line cannot be covered by fewer
than continuum many nowhere dense sets and, consequently, also not by fewer than
continuum many meagre sets.

To see that the reaping number must be large, observe that, given any infinite
\(x\subseteq\omega\), there are densely many conditions in \(\mathrm{Add}(\omega,1)\)
having arbitrarily large intersection with both \(x\) and \(\omega\setminus x\).
It follows that a Cohen real will split \(x\). Starting with fewer than continuum many
reals and applying \mac, we can therefore find a real splitting all of them, which
means that the original family was not a reaping family.
\end{proof}

\begin{figure}[ht]
\[\scalebox{1}{

\xy
\xymatrix"*"@R-0.6pc{
\ar@{-}[]+<-1em,0em>;[r]&
\mathbf{cof}(\mathcal{B})\ar@{-}[r]\ar@{-}[d] & 
\mathbf{cof}(\mathcal{L})\ar@{-}[r]\ar@{-}[dd] & \c \\
\ar@{-}[]+<-1em,0em>;[r]&
\mathfrak{d}\ar@{-}[d] &&\\
\ar@{-}[]+<-1em,0em>;[r]&
\mathbf{cov}(\mathcal{B})\ar@{-}[r] & 
\mathbf{non}(\mathcal{L})
}
\drop\frm<8pt>{--}
\POS-(65,0)
\xymatrix{
& \mathbf{cov}(\mathcal{L})\ar@{-}[r]\ar@{-}[dd] & 
\mathbf{non}(\mathcal{B})\ar@{-}["*"ll]\ar@{-}[d] \\
&& \mathfrak{b}\ar@{-}[d]\ar@{-}["*"ll] \\
\aleph_1\ar@{-}[r] & \mathbf{add}(\mathcal{L})\ar@{-}[r] & 
\mathbf{add}(\mathcal{B})\ar@{-}["*"ll]
}
\endxy
}\]
\end{figure}

But where Martin's Axiom strictly prescribes the size of all cardinal characteristics 
of the continuum, the grounded Martin's Axiom allows for more leeway in some cases. 
Observe that, since \(\kappa>\omega_1\), the iteration
\(\P\) of theorem~\ref{thm:CanonicalgrMA} contains \(\mathrm{Add}(\omega,\omega_1)\)
as an iterand. 
Thus, by lemma~\ref{lemma:FSIterationFactorsAsProduct}, there is a 
poset \(\overline{\P}\) such that \(\P\) is equivalent 
to \(\overline{\P}\times\mathrm{Add}(\omega,\omega_1)\).

\begin{theorem}
\label{thm:grMASmallCharacts}
It is consistent that the grounded Martin's Axiom holds, \textup{CH} fails and the cardinal
characteristics on the left side of Cichoń's diagram, as well as the splitting
number \(\mathfrak{s}\) are equal to \(\aleph_1\).
\end{theorem}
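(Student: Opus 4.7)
The plan is to reuse the canonical construction from Theorem~\ref{thm:CanonicalgrMA} with any $\kappa>\omega_1$ of uncountable cofinality satisfying $2^{<\kappa}=\kappa$. Then $V[G]$ already satisfies \grma, has $\c=\kappa$ (so CH fails), and admits the factoring $\P\equiv\overline{\P}\times\Add(\omega,\omega_1)$ from Lemma~\ref{lemma:FSIterationFactorsAsProduct}, as the remarks preceding the statement explain. What remains is to push the left-hand Cichoń invariants and the splitting number down to $\aleph_1$.

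Writing $V[G]=V[\overline{G}][H]$ for the factored generic and letting $(c_\alpha)_{\alpha<\omega_1}$ enumerate the Cohen reals added by $H$, I would show that $X=\{c_\alpha:\alpha<\omega_1\}$ is simultaneously a non-meager set and a splitting family in $V[G]$ by the standard Cohen-reals analysis. Since $\P$ is ccc, so is the product $\overline{\P}\times\Add(\omega,\omega_1)$, and every real in $V[G]$ therefore admits a nice name whose projection onto the Cohen factor uses only countably many coordinates of $\omega_1$ (a countable union of finite Cohen supports drawn from countably many antichains). Hence for any Borel meager set $M\in V[G]$ with Borel code $m$, there is a countable $B\subseteq\omega_1$ with $m\in V[\overline{G}][(c_\alpha)_{\alpha\in B}]$; picking any $\beta\in\omega_1\setminus B$, the real $c_\beta$ is Cohen-generic over this intermediate model and so avoids the meager Borel set coded by $m$, hence avoids $M$ in $V[G]$ by absoluteness of Borel membership. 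Consequently $X\not\subseteq M$, which gives $\mathbf{non}(\mathcal{B})=\aleph_1$. The analogous argument, using that a fresh Cohen real splits every infinite subset of $\omega$ from the model it is generic over, shows that $X$ is a splitting family, so $\mathfrak{s}=\aleph_1$.

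Once $\mathbf{non}(\mathcal{B})=\aleph_1$ is in hand, the standard Cichoń inequalities $\mathbf{add}(\mathcal{L})\leq\mathbf{cov}(\mathcal{L})\leq\mathbf{non}(\mathcal{B})$ and $\mathbf{add}(\mathcal{B})\leq\mathfrak{b}\leq\mathbf{non}(\mathcal{B})$ pin the remaining four left-hand invariants at $\aleph_1$ as well. The one point I would double-check carefully is the countable Cohen-support claim for nice names in $\overline{\P}\times\Add(\omega,\omega_1)$, but this is routine: the ccc of the product (inherited through the dense embedding into $\P$) limits each antichain to countable size, and conditions on the Cohen factor have finite support, so the total Cohen support of any nice name for a real is a countable union of finite sets. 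Beyond Theorem~\ref{thm:CanonicalgrMA} and Lemma~\ref{lemma:FSIterationFactorsAsProduct} no further machinery should be needed.
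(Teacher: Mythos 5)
Your proposal is correct and follows essentially the same route as the paper: force with the iteration of Theorem~\ref{thm:CanonicalgrMA}, factor out the \(\Add(\omega,\omega_1)\) iterand via Lemma~\ref{lemma:FSIterationFactorsAsProduct}, and show that the resulting \(\omega_1\) Cohen reals form a nonmeager splitting family, with the remaining left-hand invariants then pinned down by the standard Cichoń inequalities. Your countable-support nice-name argument is simply a more explicit rendering of the paper's remark that every real of \(V[G]\) appears before all of the Cohen reals in \(X\) do.
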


\note{Can we separate these characteristics under \grma?}

\begin{proof}
Consider a model \(V[G]\) of \grma satisfying \(\c>\aleph_1\) which was obtained by 
forcing  with the iteration \(\P\) from theorem~\ref{thm:CanonicalgrMA} over a model of 
GCH. We  have argued that this model is obtained by adding \(\aleph_1\) many Cohen reals 
to some intermediate extension. We again briefly summarize the standard arguments
for the smallness of the indicated cardinal characteristics in such an extension
(see~\cite{Blass2010:CardinalCharacteristicsHandbook} for details).

Let \(X\) be the set of \(\omega_1\) many Cohen reals added by the final stage of 
forcing. We claim it is both nonmeager and splitting. Note that any real in
\(V[G]\) appears before all of the Cohen reals in \(X\) have appeared. It follows
that every real in \(V[G]\) is split by some real in \(X\). Furthermore, if \(X\) were
meager, it would be contained in a meager Borel set, whose Borel code also
appears before all of the reals in \(X\) do. But this leads to contradiction, since
any Cohen real will avoid any meager set coded in the ground model.
\end{proof}

To summarize, while the grounded Martin's Axiom implies that the right side
of Cichoń's diagram is pushed up to \(\c\), it is consistent with the left side dropping
to \(\aleph_1\) (while CH fails, of course). This is the most extreme
way in which the effect of the grounded Martin's Axiom on Cichoń's diagram can differ 
from that of Martin's Axiom. The
precise relationships under \grma between the cardinal characteristics on the left
warrant further exploration in the future. 

We can consider further the position of the grounded Martin's Axiom within the 
hierarchy of the more well-known fragments of Martin's Axiom. As we have already 
mentioned, (local) \grma implies \mac. We can strengthen this slightly. Let \macoh
denote Martin's Axiom restricted to posets of the form \(\Add(\omega,\lambda)\) for
some \(\lambda\). It will turn out that local \grma also implies \macoh.

\begin{lemma}
\label{lemma:MACohenSmallPosets}
The axiom \macoh is equivalent to its restriction to posets of the form
\(\Add(\omega,\lambda)\) for \(\lambda<\c\).
\end{lemma}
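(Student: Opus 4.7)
The forward direction is immediate, so I would focus on deriving the unrestricted version of \macoh from its restriction to $\Add(\omega,\lambda)$ with $\lambda<\c$. The strategy is the standard elementary-submodel reduction familiar from the proof that \ma is equivalent to its restriction to small posets; the argument here is in fact easier than the proof of Lemma~\ref{lemma:grMASmallPosets} since there is no ground-model constraint to satisfy. The crucial combinatorial feature to exploit is that any subposet of $\Add(\omega,\lambda)$ obtained by restricting to a subset of coordinates is itself literally a Cohen poset on that smaller coordinate set.

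Given $\Q=\Add(\omega,\lambda)$ and a family $\mathcal{D}=\{D_\alpha:\alpha<\kappa\}$ of $\kappa<\c$ many dense subsets of $\Q$, I would fix a sufficiently large regular $\theta$ and take an elementary submodel $X\prec H_\theta$ of size $\kappa$ containing $\Q$, $\mathcal{D}$, and each $D_\alpha$. Setting $\Q^*=\Q\cap X$, the closure of $X$ under finite subsets of its elements ensures that $\Q^*$ consists of exactly the finite partial functions from $(\lambda\cap X)\times\omega$ to $2$, so $\Q^*\cong\Add(\omega,\mu)$ where $\mu=|\lambda\cap X|\leq\kappa<\c$. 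Furthermore, for each $\alpha<\kappa$ the set $D_\alpha^*=D_\alpha\cap X$ is dense in $\Q^*$ by elementarity.

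I would then apply the restricted form of \macoh to $\Q^*$ together with $\mathcal{D}^*=\{D_\alpha^*:\alpha<\kappa\}$, obtaining a filter $H^*\subseteq\Q^*$ meeting each $D_\alpha^*$. The upward closure $H$ of $H^*$ in $\Q$ is a filter on $\Q$, since any two conditions in $H^*$ admit a common refinement in $H^*$ (their pointwise union, which is again a condition with coordinates in $\lambda\cap X$), and $H$ meets every $D_\alpha$ because $H^*$ already meets $D_\alpha^*\subseteq D_\alpha$. There is no serious obstacle to the argument; the only point requiring attention is verifying that $\Q^*$ is genuinely isomorphic to a small Cohen poset rather than merely an elementary subposet of $\Q$, and this follows immediately from the combinatorial description of $\Add(\omega,\cdot)$ as finite partial functions.
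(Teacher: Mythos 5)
Your argument is correct, and it reaches the conclusion by a genuinely different route than the paper. The paper takes $\Q$ to be a first-order elementary substructure of the poset $\langle\P,D\rangle_{D\in\mathcal{D}}$ itself and then must prove, as its key claim, that any such elementary subposet of $\Add(\omega,\kappa)$ is isomorphic to a small Cohen poset; this is done by a definability analysis of the coatoms (the single-bit conditions), showing that $\Q$ is exactly the set of finite meets of its own coatoms. You instead pass to an elementary submodel $X\prec H_\theta$ and use the closure of $X$ under finite subsets of its elements to see that $\Q\cap X$ is \emph{literally} the set of finite partial functions with coordinates in $\lambda\cap X$, hence on the nose a copy of $\Add(\omega,|\lambda\cap X|)$. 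Your key step is thus more elementary and sidesteps the structural analysis entirely, at the cost of invoking the $H_\theta$ machinery; the paper's argument stays inside the poset and isolates a reusable structural fact about Cohen forcing (that elementary subposets are again Cohen posets), which is of some independent interest. Both proofs are complete; the only points in yours that deserve explicit mention are that density of $D_\alpha\cap X$ in $\Q\cap X$ uses $D_\alpha\in X$ together with elementarity, and that the restricted axiom applies to $\Q^*$ because isomorphism transfers dense sets — both of which you have in hand.
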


\begin{proof}[Proof\;\footnotemark]
\footnotetext{The proof of the key claim was suggested by Noah Schweber.}
Let \(\P=\Add(\omega,\kappa)\) and fix a collection \(\mathcal{D}\) of \(\lambda<\c\)
many dense subsets of \(\P\). As usual, let \(\Q\) be an elementary substructure of
\(\langle\P,D\rangle_{D\in\mathcal{D}}\) of size \(\lambda\). We shall show that
\(\Q\) is isomorphic to \(\Add(\omega,\lambda)\). The lemma then follows easily.

To demonstrate the desired isomorphism we shall show that \(\Q\) is determined
by the single-bit conditions it contains. More precisely, \(\Q\) contains precisely those
conditions which are meets of finitely many single-bit conditions in \(\Q\).

First note that being a single-bit conditions is definable in \(\P\): 
these are precisely
the coatoms of the order. Furthermore, given a coatom \(p\), its complementary 
coatom \(\bar{p}\) with the single bit flipped is definable from \(p\) as the unique
coatom such that any condition is compatible with either \(p\) or \(\bar{p}\). It follows
by elementarity that the coatoms of \(\Q\) are precisely the single-bit conditions 
contained in \(\Q\) and that \(\Q\) is closed under the operation \(p\mapsto\bar{p}\).
In \(\P\) any finite collection of pairwise compatible coatoms has a meet, therefore
the same holds in \(\Q\) and the meets agree. Conversely, any given condition in \(\P\)
uniquely determines the finitely many coatoms it strengthens and therefore all the
coatoms determined by conditions in \(\Q\) are also in \(\Q\). Taken together, 
this proves the claim.

It follows immediately from the claim that \(\Q\) is isomorphic to \(\Add(\omega,|X|)\)
where \(X\) is the set of coatoms of \(\Q\), and also that \(|X|=|\Q|\).
\end{proof}

\begin{proposition}
\label{prop:grMAImpliesMACohen}
The local grounded Martin's Axiom implies \macoh.
\end{proposition}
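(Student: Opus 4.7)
The plan is to use lemma~\ref{lemma:MACohenSmallPosets} to reduce to Cohen algebras \(\Add(\omega,\lambda)\) with \(\lambda<\c\), and then to exhibit each such poset as an element of the witnessing ground model \(M\), to which local \grma can be applied directly.

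Fix \(\kappa\geq\c\) and a transitive \(\mathrm{ZFC}^-\) model \(M\subseteq H_{\kappa^+}\) as in the definition of local \grma, and let \(\lambda<\c\) together with a family \(\mathcal{D}\) of fewer than \(\c\) many dense subsets of \(\Add(\omega,\lambda)\). First I would observe that \(\lambda\in M\): since \(H_{\kappa^+}\) is a ccc forcing extension of \(M\) and ccc forcing adds no new ordinals, \(M\) has the same height as \(H_{\kappa^+}\), namely \(\kappa^+\), so \(M\) contains every ordinal below \(\kappa^+\) and in particular \(\lambda\). Because \(M\) is a transitive model of \(\mathrm{ZFC}^-\), it can form the poset \(\Add(\omega,\lambda)\) internally, and since the defining notion (finite partial functions from \(\lambda\times\omega\) to \(2\), ordered by reverse inclusion) is absolute between transitive models, the version in \(M\) coincides with \(\Add(\omega,\lambda)\) as computed in \(V\).

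The remaining task is to confirm that this ground-model poset is still ccc in \(V\), but cccness of \(\Add(\omega,\lambda)\) is a theorem of \(\mathrm{ZFC}\) via the standard \(\Delta\)-system argument, so it holds unconditionally. Local \grma now applies to \(\Add(\omega,\lambda)\in M\) and yields a \(\mathcal{D}\)-generic filter. By lemma~\ref{lemma:MACohenSmallPosets}, this is enough to conclude the full \macoh.

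I do not anticipate any substantive obstacle here; the two points requiring verification—membership of \(\lambda\) in \(M\), and absoluteness of cccness for Cohen forcing—are both immediate from well-known absoluteness facts. The proof really amounts to the observation that Cohen forcing is so absolutely defined and so robustly ccc that every reasonable ground model already contains it.
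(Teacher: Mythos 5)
Your proof is correct and follows essentially the same route as the paper's: both reduce via Lemma~\ref{lemma:MACohenSmallPosets} to the posets \(\Add(\omega,\lambda)\) for \(\lambda<\c\), and both observe that since \(M\) has height \(\kappa^+\) (ccc forcing adds no ordinals) and is a transitive \(\mathrm{ZFC}^-\) model, it contains each such poset, which is absolutely ccc, so local \grma applies. Your write-up merely spells out the absoluteness details that the paper leaves implicit.
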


\begin{proof}
Suppose the local grounded Martin's Axiom holds, witnessed by \(\kappa\geq\c\) and a
\(\mathrm{ZFC}^-\)-model \(M\subseteq H_{\kappa^+}\). 
In particular, the height of \(M\) is \(\kappa^+\) and
\(M\) contains all of the posets \(\Add(\omega,\lambda)\) for \(\lambda<\kappa^+\).
But this means that Martin's Axiom holds for all the posets \(\Add(\omega,\lambda)\)
where \(\lambda<\c\) and lemma~\ref{lemma:MACohenSmallPosets} now implies that
\macoh holds.
\end{proof}

\begin{figure}[h!t]
\[
\xymatrix{
&\ma\ar@{-}[dl]\ar@{-}[dr]&\\
\mak\ar@{-}[d]&&\grma\ar@{-}[d]\\
\textup{MA(\(\sigma\)-linked)}\ar@{-}[d]&&\text{local \grma}\ar@{-}[ddl]\\
\mas\ar@{-}[dr]\\
&\mathrm{MA(Cohen)}\ar@{-}[d]\\
&\mathrm{MA(countable)}}
\]
\end{figure}

As we have seen, the local grounded Martin's Axiom implies some of the weakest
fragments of Martin's Axiom. Theorem~\ref{thm:grMASmallCharacts} tells us, 
however, that this behaviour stops quite quickly.

\begin{corollary}
\label{cor:grMANotImpliesMAsigma}
The grounded Martin's Axiom does not imply \mas.
\end{corollary}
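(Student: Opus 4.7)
The plan is to exhibit a model of \grma in which \mas fails, and the natural candidate is the model \(V[G]\) constructed in theorem~\ref{thm:grMASmallCharacts}. Recall that this model satisfies \(\grma\), \(\c>\aleph_1\), and the cardinal characteristics on the left side of Cichoń's diagram (in particular the bounding number \(\mathfrak{b}\)), as well as the splitting number \(\mathfrak{s}\), are all equal to \(\aleph_1\).

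Thus the task reduces to showing that \mas implies that these small characteristics equal \(\c\), giving a contradiction in \(V[G]\). The cleanest way is to appeal to Bell's theorem, which identifies \mas with the statement \(\mathfrak{p}=\c\). From the standard inequalities \(\mathfrak{p}\leq\mathfrak{b}\) and \(\mathfrak{p}\leq\mathfrak{s}\) one then obtains \(\mathfrak{b}=\mathfrak{s}=\c\) under \mas, contradicting \(\mathfrak{b}=\mathfrak{s}=\aleph_1<\c\) in \(V[G]\). Alternatively, without invoking Bell's theorem, one can give the short direct argument that \mas implies \(\mathfrak{b}=\c\): given fewer than \(\c\) many functions \(f_\alpha\in\omega^\omega\), the poset of finite partial functions ordered by end-extension together with dominance of finitely many \(f_\alpha\) is \(\sigma\)-centred, and a \mas-generic produces a single dominating function.

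The only step requiring any care is being clear that the consequences of theorem~\ref{thm:grMASmallCharacts} include smallness of \(\mathfrak{b}\) (which sits on the left side of the displayed diagram) or smallness of \(\mathfrak{s}\) (explicitly mentioned), since either suffices. No obstacle beyond invoking these already-established facts is anticipated; the corollary is essentially immediate from theorem~\ref{thm:grMASmallCharacts} together with the known lower bounds that \mas places on the characteristics on the left side of Cichoń's diagram.
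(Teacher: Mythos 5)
Your proposal is correct and matches the paper's proof: the paper likewise takes the model of theorem~\ref{thm:grMASmallCharacts}, where \(\mathfrak{b}=\aleph_1<\c\), and notes that \mas applied to Hechler forcing (your \(\sigma\)-centred dominating poset) forces \(\mathfrak{b}=\c\). The Bell's theorem detour is unnecessary but harmless.
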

\begin{proof}
By theorem~\ref{thm:grMASmallCharacts} there is a model of the grounded Martin's Axiom
where the bounding number is strictly smaller than the continuum. But this is impossible
under \mas, since applying the axiom to Hechler forcing yields for any family of
fewer than continuum many reals a real dominating them all.
\end{proof}

As mentioned earlier, Reitz has shown that we can perform class forcing over
any model in such a way that the resulting extension has the same \(H_\c\)
and is also not a set-forcing extension of any ground model. Performing this
construction over a model of \mas (or really any of the standard fragments of
Martin's Axiom) shows that \mas does not imply \grma, for the
disappointing reason that the
final model is not a ccc forcing extension of anything.
However, it turns out that already local \grma is independent of \mas, and even
of \mak. 
This places the grounded Martin's Axiom, as well as its local version, outside the usual 
hierarchy of fragments of Martin's Axiom.

\begin{theorem}
\label{thm:grMASeparateFragment}
Assume \(V=L\) and let \(\kappa>\omega_1\) be a regular cardinal.
Then there is a ccc forcing extension which satisfies
\(\mak+\mathfrak{c}=\kappa\) and in which the local grounded Martin's Axiom fails.
\end{theorem}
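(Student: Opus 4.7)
The plan is to run the Solovay-Tennenbaum construction of \mak restricted to Knaster posets of size less than \(\kappa\), and to preserve a ground-model Suslin tree which will then serve as a witness against local \grma. Concretely, using the canonical \(L\)-well-order and the fact that \(2^{<\kappa}=\kappa\) holds in \(L\) for regular \(\kappa>\omega_1\), I would carry out a length \(\kappa\) finite-support iteration \(\P\), where at stage \(\alpha\) we force with the next \(\P_\alpha\)-name for a Knaster poset of size less than \(\kappa\), supplied by some bookkeeping enumeration. Since finite-support iteration of Knaster forcings is Knaster (the standard \(\Delta\)-system argument), the whole poset \(\P\) is Knaster over \(L\). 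The usual catch-up argument then ensures \(L[G]\models \mak+\c=\kappa\).

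Next, using \(\diamondsuit\) in \(L\), pick a Suslin tree \(T\in L\). I claim \(T\) is still Suslin in \(L[G]\), for which it suffices to show that \(T\times\P\) is ccc in \(L\). Given any \(\{(t_\alpha,p_\alpha)\st\alpha<\omega_1\}\), Knasterness of \(\P\) yields an uncountable \(I\) on which the \(p_\alpha\) are pairwise compatible; if \((t_\alpha,p_\alpha)\) and \((t_\beta,p_\beta)\) were incompatible for all \(\alpha\neq\beta\) in \(I\), then the \(t_\alpha\) would form an uncountable antichain in \(T\), contradicting its Suslinity in \(L\). Thus \(T\) remains ccc, hence Suslin, in \(L[G]\).

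Now suppose for contradiction that local \grma holds in \(L[G]\), as witnessed by a cardinal \(\kappa'\geq\c\) and a transitive \(\mathrm{ZFC}^-\)-model \(M\subseteq H_{\kappa'^+}^{L[G]}\) such that \(H_{\kappa'^+}^{L[G]}\) is a ccc extension of \(M\). Because forcing preserves ordinals, \(M\) has height \(\kappa'^+\), so \(L^M=L_{\kappa'^+}\subseteq M\). Since \(T\) has rank below \(\omega_2\leq\kappa'^+\), we get \(T\in L_{\kappa'^+}\subseteq M\). Thus \(T\) is a poset from the ground model \(M\) that is ccc in \(L[G]\); applying local \grma to \(T\) and to the \(\omega_1<\c\) many dense sets \(D_\alpha=\set{t\in T}{\mathrm{ht}(t)\geq\alpha}\) produces a filter on \(T\) meeting each \(D_\alpha\), i.e.\ a cofinal branch, contradicting Suslinity.

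The main obstacle is the second step: Suslinity of \(T\) must survive the entire iteration, not just a single step. Rather than invoke a preservation theorem for finite-support limits of Suslin-preserving forcings, my plan sidesteps the issue by establishing once and for all that \(\P\) itself is Knaster and then using the single product argument above. Verifying that \(\P\) is Knaster is the non-trivial (but known) ingredient; granting this, the remaining ground-model-absoluteness and Suslin-versus-MA reasoning is routine.
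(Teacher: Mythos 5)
Your proposal is correct and follows essentially the same route as the paper's proof: iterate Knaster posets to force \mak, observe that the whole iteration is Knaster and hence (by the product argument) preserves the Suslin trees of \(L\), and then note that any transitive \(\mathrm{ZFC}^-\) model of the relevant height computes the constructible hierarchy correctly and so contains an \(L\)-Suslin tree, which defeats local \grma. The only cosmetic difference is that you track a single \(\diamondsuit\)-Suslin tree and argue \(L_{\kappa'^+}\subseteq M\), whereas the paper notes \(L_{\omega_2}\subseteq M\) and uses all Suslin trees of \(L\) at once.
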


\begin{proof}
Let \(\P\) be the usual finite-support iteration forcing 
\(\mak+\mathfrak{c}=\kappa\). More precisely, we consider
the names for posets in \(H_\kappa\) using appropriate bookkeeping and append them
to the iteration if they, at that stage, name a Knaster poset.
Let \(G\subseteq \P\) be generic. We claim that the local grounded Martin's Axiom 
fails in the extension \(L[G]\).

Notice first that \(\P\), being a finite-support iteration of Knaster
posets, is Knaster. It follows that the product of \(\P\) with any ccc poset is
still ccc. In particular, forcing with \(\P\) preserves the Suslin trees of \(L\).

Now fix a \(\lambda\geq\kappa\) and let \(M\in L[G]\) be a transitive \(\mathrm{ZFC}^-\)
model of height \(\lambda^+\). It is straightforward to see that \(M\) builds
its constructible hierarchy correctly so that, in particular, \(L_{\omega_2}\subseteq
M\). This implies that \(M\) has all of the Suslin trees of \(L\).
Since these trees are still Suslin in \(L[G]\), partially generic filters do not
exist for them and the model \(M\) does not witness
local \grma in \(L[G]\). As \(\lambda\) and \(M\) were completely arbitrary,
local \grma must fail in \(L[G]\).
%
\end{proof}

Let us mention that it is quite
easy to perform ccc forcing over any model and have \grma fail in the extension.

\begin{corollary}
\label{cor:grMAFailsAfterHechler}
Given any model \(V\) there is a ccc forcing extension \(V[G]\) in which
the local grounded Martin's Axiom fails.
\end{corollary}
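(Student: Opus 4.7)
The plan is to invoke Corollary~\ref{cor:grMALargeCharacts}, by which local \grma forces the right-hand side of Cichoń's diagram, and in particular the dominating number $\mathfrak{d}$, up to the continuum $\c$. It therefore suffices to exhibit, for any $V$, a ccc forcing extension satisfying $\mathfrak{d} < \c$.

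I would take $\P$ to be the two-step iteration $\Add(\omega,\omega_2) * \dot{\mathbb{E}}$, where $\dot{\mathbb{E}}$ names the length-$\omega_1$ finite-support iteration of Hechler forcing carried out in $V^{\Add(\omega,\omega_2)}$. The first factor is ccc by a standard $\Delta$-system argument, and the second by Solovay--Tennenbaum preservation of ccc under finite-support iterations of ccc posets; hence $\P$ is ccc. In the extension $V[G]$ the Cohen stage guarantees $\c^{V[G]} \geq \omega_2$, while the Hechler iteration supplies a sequence $\{d_\alpha : \alpha < \omega_1\}$ of Hechler reals with each $d_\alpha$ dominating every real of the stage-$\alpha$ intermediate extension.

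The central claim is that $\{d_\alpha : \alpha < \omega_1\}$ is a dominating family in all of $V[G]$. The usual nice-name argument handles this: any real in $V[G]$ is named by countably many maximal antichains in $\dot{\mathbb{E}}$, each antichain being countable and thus of bounded support in $\omega_1$ by ccc, so the real already appears in the intermediate extension at some stage $\alpha<\omega_1$ and is dominated by $d_\alpha$. Putting it together, $\mathfrak{d}^{V[G]} \leq \omega_1 < \omega_2 \leq \c^{V[G]}$, and local \grma fails in $V[G]$ by Corollary~\ref{cor:grMALargeCharacts}. The only step requiring any care is that the nice-name argument must be run inside the intermediate model $V^{\Add(\omega,\omega_2)}$ rather than $V$, but this is entirely routine.
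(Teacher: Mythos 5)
Your argument is correct and is essentially the paper's own proof: the paper also forces with the length-$\omega_1$ finite-support Hechler iteration (after noting one may assume $\mathrm{CH}$ fails) to get $\mathfrak{d}=\aleph_1<\c$ and then cites Corollary~\ref{cor:grMALargeCharacts}. Your only addition is making the ``may assume $\lnot\mathrm{CH}$'' step explicit by prepending $\Add(\omega,\omega_2)$, which is a harmless (and slightly more careful) packaging of the same idea.
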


\begin{proof}
We may assume that CH fails in \(V\). If \(\P\) is the length \(\omega_1\) finite-support
iteration of Hechler forcing and \(G\subseteq \P\) is generic then it is easily seen
that \(G\) is a dominating family in \(V[G]\) and therefore 
the dominating number of \(V[G]\) equals \(\aleph_1\).
It now follows from 
corollary~\ref{cor:grMALargeCharacts} that the local \grma fails in \(V[G]\).
\end{proof}

In the following two sections we shall explore the other side of the coin:
\grma is preserved by certain kinds of ccc forcing.

\section{Adding a Cohen real to a model of the grounded Martin's Axiom}

An interesting question when studying fragments of Martin's Axiom is what effect 
adding various kinds of generic reals has on it. It was shown by 
Roitman~\cite{Roitman1979:AddingRandomOrCohenRealEffectMA} that
\(\ma_{\aleph_1}\) is destroyed after adding a Cohen or a random real. At the same 
time, it was shown that adding a Cohen real preserves a certain fragment, \mas. 
In this section we follow the spirit of Roitman's arguments to show that
the grounded Martin's Axiom is preserved, even with respect to the same ground model, 
after adding a Cohen real.

It is well known that \(\ma+\lnot\mathrm{CH}\) implies that any ccc poset is Knaster 
(recall that a poset \(\P\) is Knaster if any uncountable subset of \(\P\) has in turn an 
uncountable subset of pairwise compatible elements). We start this section by transposing 
this fact to the \grma setting.

\begin{lemma}
\label{lemma:grMACccKnaster}
Let \(V\) satisfy the local grounded Martin's Axiom over the ground model \(M\subseteq
H_{\kappa^+}\) and suppose CH fails in \(V\).
Then any poset \(\P\in M\) which is ccc in \(V\) is Knaster in \(V\).
\end{lemma}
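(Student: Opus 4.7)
The plan is to imitate the classical proof that $\ma+\lnot\mathrm{CH}$ implies every ccc poset is Knaster, while taking care that the poset to which we ultimately apply local \grma genuinely lives in $M$.

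Suppose $A=\{p_\alpha:\alpha<\omega_1\}\subseteq\P$ is uncountable in $V$. I would first use the ccc of $\P$ in $V$ to locate a \emph{persistent} condition $q^*\in\P$, meaning that for every $q\leq q^*$ the set $\{\alpha:p_\alpha\not\perp q\}$ is uncountable. Such a $q^*$ exists by a standard maximal antichain argument: if no condition were persistent, then the conditions compatible with only countably many $p_\alpha$ would be dense, and a countable maximal antichain refining that dense set would exhibit $A$ as a countable union of countable sets, contradicting its uncountability.

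The key observation is that, because $M$ is transitive and $\P\in M$, we automatically have $\P\subseteq M$, so $q^*\in M$ even though it was chosen in $V$ using the non-ground-model data $A$. By Separation in $M$, the restricted poset $\P\rest q^*$ lies in $M$ as well, and it is ccc in $V$ as a subposet of $\P$. I would then apply local \grma to $\P\rest q^*$ with the family $\{D_\alpha:\alpha<\omega_1\}$ of dense subsets
\[
D_\alpha=\set{q\in\P\rest q^*}{q\leq p_\beta \text{ for some } \beta\geq\alpha},
\]
whose density in $\P\rest q^*$ is exactly what persistence delivers: given $q\leq q^*$, pick $\beta\geq\alpha$ with $p_\beta\not\perp q$ and any common refinement sits in $D_\alpha$. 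Since $\aleph_1<\c$ by the failure of CH, local \grma yields a filter $H\subseteq\P\rest q^*$ meeting each $D_\alpha$.

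Extracting the Knaster subfamily from $H$ is then routine: for each $\alpha$ pick $q_\alpha\in H\cap D_\alpha$ with $q_\alpha\leq p_{\beta(\alpha)}$ for some $\beta(\alpha)\geq\alpha$. The indices $\beta(\alpha)$ are cofinal in $\omega_1$, hence uncountable, and for any $\alpha\neq\alpha'$ the conditions $q_\alpha,q_{\alpha'}$ have a common refinement in $H$, which refines both $p_{\beta(\alpha)}$ and $p_{\beta(\alpha')}$, so $\{p_{\beta(\alpha)}\}$ is an uncountable pairwise compatible subfamily of $A$. The genuine obstacle to this argument is ensuring that the poset on which we invoke the axiom actually lies in $M$; the transitivity of $M$ disposes of this, since the persistent condition, despite depending on $A\notin M$, is automatically in $M$ as an element of $\P$. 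Dense sets are permitted to live in $V$ under local \grma, so no further elementary-substructure maneuver of the Lemma~\ref{lemma:grMASmallPosets} flavor is needed.
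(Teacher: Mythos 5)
Your proposal is correct and follows essentially the same route as the paper's proof: locate a condition below which every extension is compatible with uncountably many \(p_\alpha\), restrict to that cone, define the dense sets \(D_\alpha\) from the tail of the enumeration, and apply the axiom to extract an uncountable linked subfamily from the resulting filter. Your explicit observation that \(q^*\in M\) by transitivity and hence \(\P\rest q^*\in M\) by Separation is a detail the paper leaves implicit, and your \(\beta\geq\alpha\) in the definition of \(D_\alpha\) is the correct form of what the paper writes.
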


\begin{proof}
Let \(\P\) be as in the statement of the lemma and let 
\(A=\set{p_\alpha}{\alpha<\omega_1}\in V\)
be an uncountable subset of \(\P\). We first claim that there is a \(p^*\in \P\) such that
any \(q\leq p^*\) is compatible with uncountably many elements of \(A\). For suppose not.
Then there would be for any \(\alpha<\omega_1\) some \(q_\alpha\leq p_\alpha\) which was
compatible with only countably many elements of \(A\). We could thus choose 
\(\beta(\alpha)<\omega_1\) in such a way that \(q_\alpha\) would be incompatible with any
\(p_\beta\) for \(\beta(\alpha)\leq\beta\). Setting \(\beta_\alpha=\beta^\alpha(0)\)
(meaning the \(\alpha\)-th iterate of \(\beta\)), 
this would mean that \(\set{q_{\beta_\alpha}}{\alpha<\omega_1}\) is an uncountable antichain
in \(\P\), contradicting the fact that \(\P\) was ccc in \(V\).

By replacing \(\P\) with the cone below \(p^*\) and modifying \(A\) appropriately, we may 
assume that in fact every element of \(\P\) is compatible with uncountably many elements of
\(A\). We now let
\(D_\alpha=\bigcup_{\beta\leq\alpha}\P\rest p_\beta\)
for \(\alpha<\omega_1\). The sets \(D_\alpha\) are dense in \(\P\) and by 
\(\grma+\lnot\mathrm{CH}\), we can find, in \(V\), a filter \(H\subseteq \P\) which 
intersects every \(D_\alpha\). But then \(H\cap A\) is an uncountable set of pairwise 
compatible elements.
\end{proof}

We now introduce the main technical device we will use in showing that the grounded
Martin's Axiom is preserved when adding a Cohen real. In the proof we will be 
dealing with a two step extension \(W\subseteq W[G]\subseteq
W[G][c]\) where the first step is some ccc extension, the second adds a Cohen
real and \(W[G]\) satisfies the grounded Martin's Axiom over \(W\).
To utilize the forcing axiom in \(W[G]\) in verifying it in \(W[G][c]\), we need
to find a way of dealing with (names for) dense sets from \(W[G][c]\) in \(W[G]\).
The termspace forcing construction (due to Laver and possibly independently also 
Abraham, Baumgartner, Woodin and others) comes to mind (for more information
on this construction we point the reader to~\cite{Foreman1983:SaturatedIdeals}), 
however the
posets arising from this construction are usually quite far from being ccc and
are thus unsuitable for our context. We attempt to rectify the situation by
radically thinning out the full termspace poset and keeping only the simplest
conditions.

\begin{definition}
Given a poset \(\P\), a \(\P\)-name \(\tau\) will be called a 
\emph{finite \(\P\)-mixture} if there exists a finite
maximal antichain \(A\subseteq \P\) such that for every \(p\in A\) there is some 
\(x\) satisfying \(p\forces_\P \tau=\check{x}\). The antichain \(A\) is called a 
\emph{resolving antichain} for \(\tau\) and we denote the value \(x\) of \(\tau\)
at \(p\) by \(\tau^p\).
\end{definition}


\begin{definition}
Let \(\P\) and \(\Q\) be posets. The
\emph{finite-mixture term\-space poset} for \(\Q\) over \(\P\) is
\[\Termfin(\P,\Q)=\set{\tau}{\textup{\(\tau\) is a finite \(\P\)-mixture and
\(1\forces_\P\tau\in\check{\Q}\)}}\]
ordered by letting \(\tau\leq\sigma\) iff \(1\forces_\P\tau\leq_{\Q}\sigma\).
\end{definition}


As a side remark, let us point out that in all interesting cases the finite-mixture 
termspace poset is not a regular suborder of the full termspace poset and we can expect 
genuinely different properties. In fact, this occurs as soon as \(\P\) and \(\Q\) are 
nontrivial. To see this, suppose \(\set{p_n}{n<\omega}\) and \(\set{q_n}{n<\omega}\) are 
infinite antichain in \(\P\) and \(\Q\), respectively. By mixing we can find a
\(\tau\in\Term(\P,\Q)\) such that \(p_n\forces\tau=q_n\); we claim that this \(\tau\)
does not have a reduction to \(\Termfin(\P,\Q)\). 
Suppose \(\sigma\in\Termfin(\P,\Q)\) were such a reduction. Then there is 
a condition \(p\) in its resolving antichain that
is compatible with at least two conditions \(p_i\), say \(p_0\) and \(p_1\).
Since \(\sigma\) is a reduction of \(\tau\), it and all stronger conditions are
compatible with \(\tau\), and this means that \(\sigma^p\) is compatible with
\(q_0\) and \(q_1\). Let \(q'\leq\sigma^p,q_1\) and define
a strengthening \(\sigma'\leq\sigma\) by setting \(\sigma'^p=q'\) and keeping the rest of
the mixture the same as in \(\sigma\). But now \(\sigma'\) and \(\tau\) are clearly
incompatible.

In what follows, let us write \(\C=\funcs{<\omega}{2}\).
We should mention two key issues with the finite-mixture termspace poset 
construction. 
Firstly, the construction is very sensitive to
the concrete posets being used. For example, the forthcoming 
lemma~\ref{lemma:CohenFiniteMixturesKnaster} will show that \(\Termfin(\C,\C)\) is
Knaster, but it is not difficult to see that \(\Termfin(\mathrm{ro}(\C),\C)\) already
has antichains of size continuum. Therefore we cannot freely substitute forcing
equivalent posets in the construction. In fact, if \(\B\) is a complete Boolean
algebra then one easily sees that \(\Termfin(\B,\Q)\) consists of exactly those names
that have only finitely many interpretations and, if \(\Q\) is nontrivial
and \(\B\) has no atoms, this poset will have antichains of size continuum.
The second issue is that
it is quite rare for a poset to have a large variety of finite maximal antichains.
Some, such as \(\funcs{<\omega}{\omega}\) or various collapsing posets, have none
at all except the trivial one-element maximal antichain, while others, such as 
\(\funcs{<\omega_1}{2}\), have a few, but they do not capture the structure of
the poset very well. In all of these cases we do not expect the finite-mixture
termspace poset to be of much help. Nevertheless, in the case of Cohen forcing
\(\C\) it turns out to be a useful tool.

\begin{lemma}
\label{lemma:CohenFiniteMixturesKnaster}
If \(\Q\) is a Knaster poset then \(\Termfin(\C,\Q)\) is also Knaster.
\end{lemma}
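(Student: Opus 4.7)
The plan is to exploit two features of Cohen forcing $\C$: it is countable, and its finite maximal antichains are themselves countable as a collection. Let $\{\tau_\alpha : \alpha<\omega_1\}$ be an uncountable subset of $\Termfin(\C,\Q)$, and for each $\alpha$ fix a resolving antichain $A_\alpha \subseteq \C$ for $\tau_\alpha$. Since $\C$ is countable, there are only countably many finite subsets of $\C$, so by pigeonhole there is an uncountable $S_0 \subseteq \omega_1$ and a single finite maximal antichain $A = \{p_1, \ldots, p_n\}$ with $A_\alpha = A$ for every $\alpha \in S_0$. Thus every $\tau_\alpha$ (for $\alpha \in S_0$) is encoded by the tuple $(\tau_\alpha^{p_1}, \ldots, \tau_\alpha^{p_n}) \in \Q^n$.

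Next I would establish the key compatibility dictionary: if $\tau,\tau' \in \Termfin(\C,\Q)$ share the resolving antichain $A$, then $\tau$ and $\tau'$ are compatible in $\Termfin(\C,\Q)$ if and only if $\tau^{p_i}$ and $\tau'^{p_i}$ are compatible in $\Q$ for each $i \leq n$. The forward direction uses that a common lower bound $\sigma$ with its own resolving antichain $B$ will, for each $p_i \in A$, have some $q \in B$ compatible with $p_i$; strengthening below $p_i$ and $q$ then forces $\sigma^q \leq \tau^{p_i}, \tau'^{p_i}$, which transfers to $\Q$ since these are ground model elements. For the backward direction, given witnesses $q_i \leq \tau^{p_i}, \tau'^{p_i}$ in $\Q$, the mixture $\sigma$ with resolving antichain $A$ and $\sigma^{p_i}=q_i$ is a common extension in $\Termfin(\C,\Q)$.

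With this dictionary in hand, the problem reduces to finding an uncountable pairwise compatible subfamily of $\{(\tau_\alpha^{p_1}, \ldots, \tau_\alpha^{p_n}) : \alpha \in S_0\} \subseteq \Q^n$. Since $\Q$ is Knaster, the standard induction on $n$ shows $\Q^n$ is Knaster: apply the Knaster property coordinate by coordinate, thinning to an uncountable subset at each step. Applying this to our tuples yields an uncountable $S \subseteq S_0$ along which the tuples are pointwise compatible, and by the dictionary $\{\tau_\alpha : \alpha \in S\}$ is pairwise compatible in $\Termfin(\C,\Q)$.

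The only substantive obstacle is the compatibility equivalence in the second step; everything else is pigeonhole and a routine induction for finite products. One should take care that the definition of $\Termfin(\C,\Q)$ does not preclude refining or coarsening resolving antichains in the forward direction of the equivalence, which is why the argument passes through strengthenings within $\C$ rather than insisting on a canonical resolving antichain for the common extension $\sigma$.
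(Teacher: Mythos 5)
Your proposal is correct and follows essentially the same route as the paper: fix a single resolving antichain for uncountably many of the names (the paper refines to a common level of the tree and thins out, which is the same pigeonhole you use), apply the Knaster property of $\Q$ coordinatewise (equivalently, that $\Q^n$ is Knaster) to the tuples of values, and then mix the coordinatewise lower bounds over the common antichain to witness compatibility in $\Termfin(\C,\Q)$. The only cosmetic difference is that you package the coordinatewise thinning as the Knasterness of $\Q^n$ and prove the compatibility criterion as a biconditional, of which only the easy (mixing) direction is actually needed.
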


\begin{proof}
Let \(T=\set{\tau_\alpha}{\alpha<\omega_1}\) be an uncountable subset of 
\(\Termfin(\C,\Q)\) and
choose resolving antichains \(A_\alpha\) for \(\tau_\alpha\). By refining the 
\(A_\alpha\)
we may assume that each of them is a level of the tree \(\C\) and, by thinning out \(T\)
if necessary, that they are all in fact the same level \(A\). Let us enumerate 
\(A=\{s_0,\dotsc,s_k\}\) and write \(\tau_\alpha^i\) instead of \(\tau_\alpha^{s_i}\). 

Since \(\Q\) is Knaster, an uncountable subset \(Z_0\) of \(\omega_1\) such that
the set \(\set{\tau_\alpha^0}{\alpha\in Z_0}\subseteq \Q\) 
consists of pairwise compatible elements. Proceeding
recursively, we can find an uncountable \(Z\subseteq\omega_1\) such that for every
\(i\leq k\) the set \(\set{\tau_\alpha^i}{\alpha\in Z}\subseteq \Q\) 
consists of pairwise compatible elements. 
We can mix the lower bounds of \(\tau_\alpha^i\) and \(\tau_\beta^i\) over the antichain
\(A\) to produce a name \(\sigma_{\alpha\beta}\in \Termfin(\C,\Q)\) such that
\(\sigma_{\alpha\beta}\) is a lower bound for \(\tau_\alpha\) and \(\tau_\beta\).
Thus \(\set{\tau_\alpha}{\alpha\in Z}\) is an uncountable subset of \(T\) 
consisting of pairwise compatible elements, which proves that \(\Termfin(\C,\Q)\) is 
Knaster. 
\end{proof}

The following lemma is somewhat awkward, but it serves to give us a way of
transforming a name for a dense subset of \(\Q\) into a closely related 
actual dense subset of \(\Termfin(\C,\Q)\). With the usual termspace forcing
construction simply taking \(E=\set{\tau}{\forces\tau\in\dot{D}}\) would have sufficed,
but this set is not dense in \(\Termfin(\C,\Q)\), so modifications are necessary.

\begin{lemma}
\label{lemma:CohenFiniteMixturesDenseTranslation}
Let \(\Q\) be poset and \(\dot{D}\) a \(\C\)-name for a
dense subset of \(\Q\). Then for any \(n<\omega\) the set
\begin{align*}
E_n=\{\,\tau\in \Termfin(\C,\Q)\,;\,& \exists A \textup{ a resolving antichain for \(\tau\)}\,
\forall s\in A\colon \\&
n\leq |s| \land \exists s'\leq s\colon s'\forces\tau\in\dot{D}\,\}
\end{align*}
is a dense subset of \(\Termfin(\C,\Q)\).
\end{lemma}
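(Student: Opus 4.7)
The plan is to take an arbitrary $\tau \in \Termfin(\C,\Q)$ and construct $\tau' \leq \tau$ witnessing membership in $E_n$. In outline, I would first refine $\tau$'s resolving antichain to a sufficiently deep level of $\C$ so that the length condition in the definition of $E_n$ becomes automatic, and then, at each node of the refined antichain, replace $\tau$'s value there by a strengthening that some further Cohen extension forces into $\dot{D}$.

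To set up the refinement, fix a resolving antichain $A_0$ for $\tau$ and choose $m$ at least $n$ and at least $\max_{s_0 \in A_0}|s_0|$; let $A$ be the $m$-th level of $\C$. Since $A_0$ is a finite maximal antichain in $\C$, every $s \in A$ extends a unique $s_0 \in A_0$, so $A$ remains a resolving antichain for $\tau$, with $\tau^s := \tau^{s_0}$. Now for each $s \in A$, the condition $s$ forces that $\dot{D}$ meets the cone below $\check{\tau^s}$; by the maximum principle together with one further strengthening I obtain $s' \leq s$ in $\C$ and $q_s \leq \tau^s$ in $\Q$ such that $s' \forces \check{q}_s \in \dot{D}$. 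Define $\tau'$ as the mixture over $A$ with $\tau'^s = q_s$. Then $\tau' \in \Termfin(\C,\Q)$, and since $q_s \leq \tau^s$ at each $s \in A$ we have $1 \forces \tau' \leq_\Q \tau$; the antichain $A$ together with the witnesses $s'$ show that $\tau' \in E_n$.

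The only even mildly delicate step is extracting the pair $(s', q_s)$ in concrete form from the mere density of $\dot{D}$; this is exactly where the finite-mixture termspace construction interacts awkwardly with arbitrary $\C$-names for elements of $\Q$, and it is the reason the lemma is phrased with an explicit $s' \leq s$ rather than by simply asserting that $\{\tau \st 1\forces \tau\in\dot{D}\}$ is dense. This extraction is, however, a routine forcing argument, and no other step in the plan appears troublesome.
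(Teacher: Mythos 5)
Your proposal is correct and follows essentially the same route as the paper's proof: refine the resolving antichain so every node has length at least $n$, then at each node use the forced density of $\dot{D}$ to find a further extension $s'\leq s$ deciding a concrete $q_s\leq\tau^s$ with $s'\forces\check{q}_s\in\dot{D}$, and mix the $q_s$ over the refined antichain. The only cosmetic difference is that the paper first invokes fullness to get a single global name $\rho$ with $1\forces\rho\leq\sigma\land\rho\in\dot{D}$ and then decides it below each node, whereas you extract the witnesses node by node; the two are interchangeable.
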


One can think of the set \(E_n\) as the set of those \(\tau\) that have a sufficiently
deep resolving antichain, none of whose elements force \(\tau\) to not be in
\(\dot{D}\).

\begin{proof}
Let \(\sigma\in \Termfin(\C,\Q)\) and let \(A\) be a resolving antichain for it. Any finite refinement
of a resolving antichain is, of course, another resolving antichain, so we may assume that
we already have \(n\leq |s|\) for all \(s\in A\). By fullness
we can find a name \(\rho\) for an element of \(\Q\) such that
\(1\forces_{\C}(\rho\leq \sigma\land \rho\in\dot{D})\). For each \(s\in A\) we can find
an \(s'\leq s\) such that \(s'\forces_{\C} \rho=\check{q}_s\)
for some \(q_s\in\Q\). By mixing the \(q_s\) over the antichain 
\(A\), we get a name \(\tau\in E_n\) such that \(\tau\leq\sigma\), 
which shows that \(E\) is dense in \(\Termfin(\C,\Q)\).
\end{proof}


\begin{theorem}
\label{thm:CohenPreservesgrMA}
Assume the local grounded Martin's Axiom holds in \(V\) over the ground model 
\(M\subseteq H_{\kappa^+}\) and let 
\(V[c]\) be obtained by adding a Cohen real to \(V\). Then \(V[c]\) also satisfies 
the local grounded Martin's Axiom over the ground model \(M\subseteq H_{\kappa^+}^{V[c]}=
H_{\kappa^+}[c]\).
\end{theorem}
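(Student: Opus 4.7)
The plan is to reduce local \grma in $V[c]$ to an instance of local \grma in $V$, using the finite-mixture termspace poset $\Termfin(\C,\Q)$ to pull $\C$-names for dense subsets of $\Q$ back to actual dense sets in $V$. Let $\Q\in M$ be ccc in $V[c]$ and let $\mathcal{D}=\set{D_\alpha}{\alpha<\lambda}\in V[c]$ be a family of $\lambda<\c^{V[c]}=\c^V$ dense subsets of $\Q$, with $\C$-names $\dot{D}_\alpha\in V$. Note that $H_{\kappa^+}^{V[c]}=H_{\kappa^+}^V[c]$ is trivially a ccc forcing extension of $M$ (take the product of $\C$ with the ccc poset in $M$ witnessing \grma in $V$), so only the Martin's Axiom instance itself requires work.

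If CH holds in $V$ then $\lambda$ is countable and a $\mathcal{D}$-generic filter is built by a diagonal recursion, so assume CH fails in $V$. Since $\Q$ remains ccc in $V$, lemma~\ref{lemma:grMACccKnaster} makes it Knaster in $V$, and lemma~\ref{lemma:CohenFiniteMixturesKnaster} then shows that $\Termfin(\C,\Q)\in M$ is Knaster, hence ccc, in $V$. Apply local \grma in $V$ to $\Termfin(\C,\Q)$ and to the $\lambda\cdot\aleph_0<\c^V$ many dense sets $E_n^\alpha$ furnished by lemma~\ref{lemma:CohenFiniteMixturesDenseTranslation} (one family for each $\dot{D}_\alpha$) to obtain a filter $\Gamma\in V$ on $\Termfin(\C,\Q)$ meeting every $E_n^\alpha$.

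For $\tau\in\Termfin(\C,\Q)$ with resolving antichain $A$, write $\tau^c$ for the value $\tau^{s_0}\in\Q$ where $s_0\in A$ is the unique initial segment of $c$, and set $H=\set{q\in\Q}{\exists\tau\in\Gamma\colon\tau^c\leq_\Q q}\in V[c]$. Since $\sigma\leq\tau$ in $\Termfin(\C,\Q)$ forces $\sigma^c\leq_\Q\tau^c$, the set $H$ is upward-closed and directed, hence a filter in $\Q$. To see $H$ meets $D_\alpha$, consider $D_\alpha^*=\set{s\in\C}{\exists\tau\in\Gamma\colon s\forces_\C\tau\in\dot{D}_\alpha}\in V$: given $s\in\C$, pick $n>|s|$ and $\tau\in\Gamma\cap E_n^\alpha$ with resolving antichain $A$ of conditions of length $\geq n$; since $A$ is maximal in $\C$ and every $a\in A$ is longer than $s$, any extension of $s$ of length $\max_{a\in A}|a|$ is an extension of some $s_0\in A$, and this $s_0$ is then itself an extension of $s$. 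The definition of $E_n^\alpha$ furnishes $s'\leq s_0$ with $s'\forces\tau\in\dot{D}_\alpha$, showing $s'\in D_\alpha^*$ below $s$. Thus $D_\alpha^*$ is dense in $\C$, and by Cohen-genericity of $c$ over $V$ there are $\tau\in\Gamma$ and $s\in c$ with $s\forces\tau\in\dot{D}_\alpha$; then $\tau^c\in D_\alpha\cap H$.

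The main obstacle is arranging the auxiliary poset $\Termfin(\C,\Q)$ to simultaneously satisfy the chain condition (so that local \grma in $V$ applies to it) and to carry enough resolving antichains at arbitrary depth (so that the Cohen real $c$ can extract from $\Gamma$ an actual $\mathcal{D}$-generic filter in $V[c]$); these two requirements are exactly met by lemmas~\ref{lemma:CohenFiniteMixturesKnaster} and~\ref{lemma:CohenFiniteMixturesDenseTranslation}, so the argument reduces to their careful orchestration.
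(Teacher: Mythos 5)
Your proposal is correct and follows essentially the same route as the paper's proof: reduce to an application of local \grma in \(V\) to the Knaster poset \(\Termfin(\C,\Q)\in M\) (via lemmas~\ref{lemma:grMACccKnaster} and~\ref{lemma:CohenFiniteMixturesKnaster}), hit the dense sets \(E^\alpha_n\) from lemma~\ref{lemma:CohenFiniteMixturesDenseTranslation}, and then use density of the set of conditions forcing some \(\tau\in\Gamma\) into \(\dot D_\alpha\) together with genericity of \(c\) to read off a \(\mathcal{D}\)-generic filter from \(\set{\tau^c}{\tau\in\Gamma}\). The only differences are cosmetic (handling the CH case by Rasiowa--Sikorski rather than by noting full \ma holds, and slightly different bookkeeping in the density argument).
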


\begin{proof}
By assumption there is a ccc poset \(\P\in M\) such that \(H_{\kappa^+}^V=M[G]\) for 
an \(M\)-generic \(G\subseteq \P\).
We may assume that \(\mathrm{CH}\) fails in \(V\), for otherwise it would also hold in
the final extension\(V[c]\), which would then
satisfy the full Martin's Axiom. Consider a poset \(\Q\in M\) which is ccc in \(V[c]\). 
Since \(\C\) is ccc, \(\Q\) must also
be ccc in \(V\) and by lemma~\ref{lemma:grMACccKnaster} is in fact Knaster in \(V\).

Let \(\lambda<\c^{V[c]}\) be a cardinal and let 
\(\mathcal{D}=\set{D_\alpha}{\alpha<\lambda}\in V[c]\) be a collection of dense 
subsets of \(\Q\). Pick names \(\dot{D}_\alpha\in V\) for these such that 
\(1\forces_{\C} \text{``\(\dot{D}_\alpha\subseteq \check{\Q}\) is dense''}\).

Consider \(\R=\Termfin(\C,\Q)\in M\). Note that \(\R\) is computed the same in
\(M\) and in \(V\). It now follows from lemma~\ref{lemma:CohenFiniteMixturesKnaster} 
that \(\R\) is Knaster in \(V\) (although not necessarily in \(M\)).


Let \(E_{\alpha,n}\subseteq \R\) be the dense sets associated to the
\(\dot{D}_\alpha\) as in lemma~\ref{lemma:CohenFiniteMixturesDenseTranslation}.
Write \(\mathcal{E}=\set{E_{\alpha,n}}{\alpha<\lambda,n<\omega}\). Applying
the grounded Martin's Axiom in \(V\), 
we get an \(\mathcal{E}\)-generic filter \(H\subseteq \R\).
We will show that the filter generated by the set
\(H^c=\set{\tau^c}{\tau\in H}\) is \(\mathcal{D}\)-generic. Pick
a \(\dot{D}_\alpha\) and consider the set
\begin{align*}
B_\alpha=\{\,s'\in \C\,;\,&\exists \tau\in H\exists A \text{ a resolving antichain for \(\tau\)}
\,\exists s\in A \colon\\& s'\leq s\land s'\forces_{\C}\tau\in \dot{D}_\alpha\,\}
\end{align*}
We will show that \(B_\alpha\) is dense in \(\C\). To that end, pick a \(t\in \C\). Since
\(H\) is \(\mathcal{E}\)-generic, there is some \(\tau\in H\cap E_{\alpha,|t|}\). Let
\(A\) be a resolving antichain for \(\tau\). Since \(A\) is maximal, \(t\) must be compatible
with some \(s\in A\), and, since \(|t|\leq |s|\), we must in fact have \(s\leq t\). But then,
by the definition of \(E_{\alpha,|t|}\), there exists a \(s'\leq s\) such that
\(s'\forces_{\C}\tau\in \dot{D}\). This exactly says that \(s'\in B_\alpha\)
and also \(s'\leq s\leq t\). Thus \(B_\alpha\) really is dense in \(\C\).

By genericity we can find an \(s'\in B_\alpha\cap c\). If \(\tau\in H\) is the corresponding
name, the definition of \(B_\alpha\) now implies that \(\tau^c\in D_\alpha\cap H^c\). Thus
\(H^c\) really does generate a \(\mathcal{D}\)-generic filter.
\end{proof}

The proof is easily adapted to show that, starting from the full grounded Martin's Axiom 
in \(V\) over a ground model \(W\), we obtain the full grounded Martin's Axiom in
\(V[c]\) over the same ground model \(W\).

\section{Adding a random real to a model of the grounded Martin's Axiom}

Our next goal is to prove a preservation theorem for adding a random real. The machinery
of the proof in the Cohen case will be slightly modified to take advantage of
the measure theoretic structure in this context. 

Recall that a measure algebra is a pair \((\B,m)\) where \(\B\) is a complete Boolean
algebra and \(m\colon\B\to[0,1]\) is a countably additive map such that
\(m(b)=1\) iff \(b=1\).

\begin{definition}
Let \((\B,m)\) be a measure algebra and \(0<\varepsilon<1\). A \(\B\)-name \(\tau\) will be
called an \emph{\(\varepsilon\)-deficient finite \(\B\)-mixture} if there is a finite antichain
\(A\subseteq \B\) such that \(m(\sup A)>1-\varepsilon\) and for every \(w\in A\)
there exists some \(x\) such that \(w\forces_\B\tau=\check{x}\).
The antichain \(A\) is called a \emph{resolving antichain} and we denote the value
\(x\) of \(\tau\) at \(w\) by \(\tau^w\).
\end{definition}

\begin{definition}
Let \((\B,m)\) be a measure algebra, \(\Q\) a poset and \(0<\varepsilon<1\).
The \emph{\(\varepsilon\)-deficient
finite mixture termspace poset} for \(\Q\) over \((\B,m)\) is
\[\Termfin^\varepsilon(\B,\Q)=\set{\tau}{\textup{\(\tau\) is an
\(\varepsilon\)-deficient finite \(\B\)-mixture and \(1\forces_\B\tau\in\check{\Q}\)}}\]
ordered by letting \(\tau\leq\sigma\) iff there are resolving antichains \(A_\tau\)
and \(A_\sigma\) such that \(A_\tau\) refines \(A_\sigma\) and 
\(\sup A_\tau\forces_\B\tau\leq \sigma\).
\end{definition}

The following lemma is the analogue of 
lemma~\ref{lemma:CohenFiniteMixturesDenseTranslation} for \(\varepsilon\)-deficient
finite mixtures.

\begin{lemma}
\label{lemma:DeficientFiniteMixDenseTranslation}
Let \((\B,m)\) be a measure algebra, \(\Q\) a poset and \(0<\varepsilon<1\).
If \(\dot{D}\) is a \(\B\)-name for a dense subset of \(\Q\) then 
\[E=\set{\tau\in\Termfin^\varepsilon(\B,\Q)}{\exists A \textup{ a resolving antichain for \(\tau\)}
\colon\sup A\forces_\B\tau\in\dot{D}}\]
is dense in \(\Termfin^\varepsilon(\B,\Q)\).
\end{lemma}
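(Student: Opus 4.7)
The plan is to refine the resolving antichain of any given $\sigma \in \Termfin^\varepsilon(\B,\Q)$ into a new one whose supremum forces the refined mixture to lie in $\dot D$, exploiting the slack in the strict inequality $m(\sup A_\sigma) > 1-\varepsilon$. Fix such a $\sigma$ with resolving antichain $A_\sigma$, and pick $\varepsilon' < \varepsilon$ with $m(\sup A_\sigma) = 1 - \varepsilon'$; for each $w \in A_\sigma$ let $\sigma^w$ denote the corresponding value of $\sigma$.

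The first step I would take is to verify that below each $w \in A_\sigma$ the set $S_w = \set{b \in \B}{b \leq w \text{ and } \exists q \leq \sigma^w\colon b \forces_\B \check q \in \dot D}$ is dense below $w$. This is a short check: any $b \leq w$ forces both that $\dot D$ is dense in $\Q$ and that $\sigma^w \in \Q$, so $b \forces \exists q \in \dot D\colon q \leq \sigma^w$, and the maximal principle yields a strengthening that decides such a $q$ as a check name.

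Next, because $\B$ is ccc as a measure algebra, $S_w$ contains a countable maximal antichain below $w$, and by $\sigma$-additivity of $m$ I can truncate it to a finite subantichain $\{b^w_1, \dotsc, b^w_{N_w}\} \subseteq S_w$ with $m(\bigvee_i b^w_i) > m(w) - \delta_w$, where the positive $\delta_w$ are chosen in advance so that $\sum_{w \in A_\sigma} \delta_w < \varepsilon - \varepsilon'$. Collecting these produces a finite antichain $A_\tau = \set{b^w_i}{w \in A_\sigma,\, 1 \leq i \leq N_w}$ refining $A_\sigma$ and satisfying $m(\sup A_\tau) > (1-\varepsilon') - (\varepsilon - \varepsilon') = 1 - \varepsilon$. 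For each $b^w_i$ select a witness $q^w_i \leq \sigma^w$ with $b^w_i \forces_\B \check q^w_i \in \dot D$, and let $\tau$ be the mixture defined by $\tau^{b^w_i} = q^w_i$. Then $\tau \in \Termfin^\varepsilon(\B,\Q)$ with resolving antichain $A_\tau$; since each $q^w_i \leq \sigma^w$ we have $\sup A_\tau \forces \tau \leq \sigma$, so $\tau \leq \sigma$ in the sense of the poset order, and $\sup A_\tau \forces \tau \in \dot D$ by construction, placing $\tau$ in $E$.

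The main subtlety, and what makes the argument slightly more delicate than its Cohen counterpart in lemma~\ref{lemma:CohenFiniteMixturesDenseTranslation}, is the measure-theoretic bookkeeping: since an $\varepsilon$-deficient mixture only requires $m(\sup A) > 1 - \varepsilon$ rather than equality to $1$, one cannot afford to discard too much measure when replacing each $w \in A_\sigma$ by the finitely many $b^w_i$. The strict inequality $m(\sup A_\sigma) > 1 - \varepsilon$ furnishes exactly the slack $\varepsilon - \varepsilon' > 0$ needed to absorb the truncation error from countable to finite antichains, and this is the one genuinely quantitative point in the proof.
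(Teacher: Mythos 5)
Your proof is correct and takes essentially the same route as the paper's: both exploit the slack in \(m(\sup A_\sigma)>1-\varepsilon\) by refining each \(w\in A_\sigma\) to a maximal antichain of conditions deciding a witness \(q\leq\sigma^w\) in \(\dot D\), truncating to finite subantichains with total measure loss below that slack, and mixing. The only cosmetic difference is that the paper invokes fullness to name the witnesses before taking the deciding antichains, whereas you argue density of the deciding conditions directly.
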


\begin{proof}
Let \(\sigma\in\Termfin^\varepsilon(\B,\Q)\) and pick a resolving antichain 
\(A=\{w_0,\dotsc,w_n\}\) for it. Let \(\delta=m(\sup A)-(1-\varepsilon)\). 
By fullness there are \(\B\)-names \(\rho_i\) for elements of \(\Q\) such that
\(w_i\forces \rho_i\leq \sigma \land \rho\in \dot{D}\). There are
maximal antichains \(A_i\) below \(w_i\) such that each element of
\(A_i\) decides the value of \(\rho_i\). We now choose finite subsets
\(A_i'\subseteq A_i\) such that \(m(\sup A_i)-m(\sup A_i')<\frac{\delta}{n}\).
Write \(A'=\bigcup_i A_i'\). We then have \(m(\sup A')>1-\varepsilon\).
By mixing we can find a \(\B\)-name \(\tau\) for an element of \(\Q\) which is forced 
by each element of \(A'\) to be equal to the appropriate \(\rho_i\). Thus \(A'\)
is a resolving antichain for \(\tau\) and we have
ensured that \(\tau\) is in \(E\) and \(\sigma\leq \tau\).
\end{proof}

In what follows we let \(\Bnull\) be the random Boolean algebra with the induced
Lebesgue measure \(\mu\). The next lemma is the analogue of
lemma~\ref{lemma:CohenFiniteMixturesKnaster}. 

\begin{lemma}
\label{lemma:RandomDeficientMixKnaster}
Let \(\Q\) be a Knaster poset and \(0<\varepsilon<1\). Then 
\(\Termfin^\varepsilon(\Bnull,\Q)\) is Knaster as well.
\end{lemma}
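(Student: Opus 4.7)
The plan is to follow the spirit of lemma~\ref{lemma:CohenFiniteMixturesKnaster} but replace the use of identical Cohen-tree levels with a measure-theoretic approximation argument, since resolving antichains in \(\Bnull\) do not come in a natural countable hierarchy. Start with an uncountable \(T=\{\tau_\alpha:\alpha<\omega_1\}\subseteq\Termfin^\varepsilon(\Bnull,\Q)\) and pick for each \(\alpha\) a resolving antichain \(A_\alpha=(w_0^\alpha,\dotsc,w_{k_\alpha-1}^\alpha)\) with \(\mu(\sup A_\alpha)>1-\varepsilon\). By repeated pigeonholing I will thin \(T\) to an uncountable subfamily (re-indexed by \(\omega_1\)) along which (i) the lengths \(k_\alpha\) are a constant \(k\), and (ii) some fixed \(\eta>0\) satisfies \(\mu(\sup A_\alpha)>1-\varepsilon+\eta\) for every \(\alpha\). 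The second thinning is possible because otherwise each set \(\{\alpha:\mu(\sup A_\alpha)>1-\varepsilon+1/n\}\) would be countable, yet the union of these sets over \(n\) exhausts all of \(\omega_1\).

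The core step is a further thinning exploiting that \(\Bnull\) is approximable in symmetric difference by a countable dense family, namely the finite Boolean combinations of basic clopens. Pick \(\delta>0\) small relative to \(\eta\) and \(k\) and, for each \(\alpha\), select a tuple \((\tilde w_0^\alpha,\dotsc,\tilde w_{k-1}^\alpha)\) of such rational approximants with \(\mu(w_i^\alpha\triangle\tilde w_i^\alpha)<\delta\) for all \(i<k\). Since only countably many such tuples exist, a pigeonhole produces an uncountable subfamily on which the template \((\tilde w_0,\dotsc,\tilde w_{k-1})\) is constant, and I may even replace it by its pairwise-disjoint version at the cost of a \(k\)-dependent inflation of \(\delta\). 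A short computation then gives \(\mu(w_i^\alpha\wedge w_i^\beta)>\mu(w_i^\alpha)-2\delta\) for all \(\alpha,\beta\) and \(i\), and \(\mu(w_i^\alpha\wedge w_j^\beta)=O(\delta)\) for \(i\neq j\). In particular the diagonal union \(r_{\alpha\beta}:=\bigvee_{i<k}(w_i^\alpha\wedge w_i^\beta)\) satisfies \(\mu(r_{\alpha\beta})>\mu(\sup A_\alpha)-2k\delta>1-\varepsilon+\eta-2k\delta\), which is above \(1-\varepsilon\) as soon as \(\delta<\eta/(2k)\).

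Now I exploit the Knaster property of \(\Q\). Finite powers of a Knaster poset are Knaster by iterating the one-dimensional thinning across coordinates, so applying this fact to the sequences \((\tau_\alpha^{w_0^\alpha},\dotsc,\tau_\alpha^{w_{k-1}^\alpha})\in\Q^k\) produces a further uncountable \(Z\) such that, for all \(\alpha,\beta\in Z\) and \(i<k\), the values \(\tau_\alpha^{w_i^\alpha}\) and \(\tau_\beta^{w_i^\beta}\) share a common lower bound \(q_i^{\alpha\beta}\in\Q\). For any \(\alpha,\beta\in Z\), I then build a common lower bound \(\tau\in\Termfin^\varepsilon(\Bnull,\Q)\) of \(\tau_\alpha\) and \(\tau_\beta\) by letting \(\tau\) be resolved on \(r_{\alpha\beta}\) with \(\tau^{w_i^\alpha\wedge w_i^\beta}=q_i^{\alpha\beta}\) and defined arbitrarily off of \(r_{\alpha\beta}\). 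The antichain \(\{w_i^\alpha\wedge w_i^\beta:i<k\}\) then witnesses both \(\tau\leq\tau_\alpha\) and \(\tau\leq\tau_\beta\): it refines \(A_\alpha\) and \(A_\beta\), on each of its cells we have \(\tau=q_i^{\alpha\beta}\leq\tau_\alpha^{w_i^\alpha}\) and \(\tau=q_i^{\alpha\beta}\leq\tau_\beta^{w_i^\beta}\), and its supremum \(r_{\alpha\beta}\) has measure above \(1-\varepsilon\) by the previous paragraph.

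The main obstacle is precisely securing that this resolving antichain of the common lower bound has sufficient measure. The naive common refinement \(\{w_i^\alpha\wedge w_j^\beta:i,j<k\}\) only produces a supremum of measure bounded below by \(1-2\varepsilon\), which is not enough to witness an \(\varepsilon\)-deficient mixture. The combined use of the positive slack \(\eta\) and the template approximation — which forces the off-diagonal overlaps \(w_i^\alpha\wedge w_j^\beta\) (for \(i\neq j\)) to have measure \(O(\delta)\) — is what allows the construction to be confined to the diagonal cells and pushes the total measure of the resolving antichain of \(\tau\) back above \(1-\varepsilon\).
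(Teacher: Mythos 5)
Your proof is correct and follows essentially the same route as the paper's: fix the size of the resolving antichains and a positive measure slack, stabilize the antichains against a fixed countable template so that the diagonal overlaps \(w_i^\alpha\wedge w_i^\beta\) retain total measure above \(1-\varepsilon\), apply the Knaster property of \(\Q\) coordinatewise, and mix the common lower bounds over the diagonal cells. The only real difference is in how the stabilization is achieved: you approximate each cell in the measure metric by a member of a countable dense subfamily and pigeonhole on the resulting tuple, whereas the paper shrinks the cells to compact sets and wraps them in pairwise disjoint finite unions of basic opens of nearly the same measure before pigeonholing --- the same separability argument in slightly different clothing.
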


\begin{proof}
Let \(\set{\tau_\alpha}{\alpha<\omega_1}\) be an uncountable subset of 
\(\Termfin^\varepsilon(\Bnull,\Q)\).
Choose resolving antichains \(A_\alpha\) for the \(\tau_\alpha\).
We may assume that there is a fixed \(\delta\) such that
\(1-\varepsilon<\delta<\mu(\sup A_\alpha)\) for all \(\alpha\).
We may also assume that all of the \(A_\alpha\) have the same size \(n\) and enumerate 
them as \(A_\alpha=\{w_\alpha^0,\dotsc,w_\alpha^{n-1}\}\); we shall write
\(\tau_\alpha^i\) instead of \(\tau_\alpha^{w_\alpha^i}\). 
By inner regularity of the measure
we may assume further that the elements of each \(A_\alpha\) are compact.
Using this and the outer regularity of the measure we can find open neighbourhoods
\(w_\alpha^i\subseteq U_\alpha^i\) such that \(U_\alpha^i\) and \(U_\alpha^j\) are
disjoint for all \(\alpha\) and distinct \(i\) and \(j\) and additionally satisfy
\[\mu(U_\alpha^i\setminus w_\alpha^i)<\frac{\delta-(1-\varepsilon)}{n}\]

Fix a countable basis for the topology. Since the \(w^i_\alpha\) are compact,
we may take the \(U^i_\alpha\) to be finite unions of basic opens. Since there
are only countably many such finite unions, we can assume that there are fixed
\(U^i\) such that \(U^i_\alpha=U^i\) for all \(\alpha\). 

We now obtain 
\begin{align*}
\mu(w^i_\alpha\cap w^i_\beta)&=
\mu(w^i_\alpha)-\mu(w_\alpha^i\cap(U^i\setminus w^i_\beta))\\
&\geq \mu(w^i_\alpha)-\mu(U^i\setminus w^i_\beta)
>\mu(w^i_\alpha) - \frac{\delta-(1-\varepsilon)}{n}
\end{align*}
In particular, this gives that \(\sum_i\mu(w^i_\alpha\cap w^i_\beta)>1-\varepsilon\).

Since \(\Q\) is Knaster we may assume that the elements of
\(\set{\tau_\alpha^i}{\alpha<\omega_1}\) are pairwise compatible and
that this holds for any \(i\). Pick lower bounds \(q^i_{\alpha\beta}\)
for the \(\tau_\alpha^i\) and \(\tau_\beta^i\). By mixing we can
construct \(\Bnull\)-names \(\sigma_{\alpha\beta}\) for elements of \(\Q\) such that  
\(w_\alpha^i\cap w_\beta^i\forces \sigma_{\alpha\beta}=q^i_{\alpha\beta}\)
for all \(i\). By construction the \(\sigma_{\alpha\beta}\) are 
\(\varepsilon\)-deficient finite \(\Bnull\)-mixtures and are lower bounds for
\(\tau_\alpha\) and \(\tau_\beta\).
\end{proof}

While the concept of \(\varepsilon\)-deficient finite mixtures makes sense for any measure
algebra, finding a good analogue of the preceding proposition for algebras of
uncountable weight has proven difficult.

\begin{lemma}
\label{lemma:MeasureOfSupOfDirectedInfs}
Let \((\B,m)\) be a measure algebra and \(0<\varepsilon<1\).
Suppose \(\mathcal{A}\) is a family of finite antichains in \(\B\), downward
directed under refinement, such that \(m(\sup A)>1-\varepsilon\) for any 
\(A\in\mathcal{A}\). If we let \(d_{\mathcal{A}}=\inf\set{\sup A}{A\in\mathcal{A}}\)
then \(m(d_{\mathcal{A}})\geq 1-\varepsilon\).
\end{lemma}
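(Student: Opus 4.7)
The plan is to reduce this directed-infimum claim to the standard continuity of measure along a decreasing sequence. First, observe that if \(A\in\mathcal{A}\) refines \(A'\in\mathcal{A}\) then \(\sup A\leq\sup A'\), so the family \(\set{\sup A}{A\in\mathcal{A}}\) is downward directed in \(\B\), with infimum precisely \(d_{\mathcal{A}}\).

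The key step is to replace this (possibly large) directed net by a countable decreasing sequence that still converges to \(d_{\mathcal{A}}\). Since \((\B,m)\) is a measure algebra it is ccc: any antichain decomposes \(1\) into pieces of positive measure, of which there can be at most countably many. In any ccc complete Boolean algebra every infimum (and dually every supremum) of a set is already realized by a countable subset, so I can choose a countable \(\mathcal{A}_0=\set{A_n}{n<\omega}\subseteq\mathcal{A}\) with \(\inf_{n}\sup A_n=d_{\mathcal{A}}\). Using the directedness of \(\mathcal{A}\) I then build recursively \(B_0,B_1,\ldots\in\mathcal{A}\) with \(B_n\) refining both \(A_n\) and \(B_{n-1}\); the resulting sequence \(\sup B_0\geq\sup B_1\geq\cdots\) is decreasing, bounded above termwise by \(\sup A_n\) and below by \(d_{\mathcal{A}}\), so its infimum is exactly \(d_{\mathcal{A}}\).

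Finally, invoking continuity of measure from above for decreasing sequences in \(\B\) (which follows from countable additivity applied to the telescoping partition of \(\sup B_0 - d_{\mathcal{A}}\) into the increments \(\sup B_n - \sup B_{n+1}\)) gives
\[
m(d_{\mathcal{A}})=\lim_n m(\sup B_n)\geq 1-\varepsilon,
\]
since each \(m(\sup B_n)>1-\varepsilon\) by hypothesis. The only subtlety worth flagging is the countable-subnet reduction, as \(\mathcal{A}\) could a priori be very large; this is precisely where the ccc-ness of the measure algebra is the essential ingredient, and it is also the reason one should not expect a clean analogue for measure algebras of uncountable weight, as the paper remarks immediately preceding the lemma.
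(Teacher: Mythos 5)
Your proof is correct. It rests on the same two pillars as the paper's argument---the ccc-ness of a measure algebra and countable additivity---but organizes them differently. The paper passes to complements, reformulating the claim as ``an upward directed set of elements of measure less than \(\varepsilon\) has supremum of measure at most \(\varepsilon\)'', refines that directed set to an antichain \(Z\) with the same supremum (countable by ccc), and then uses directedness together with countable additivity to bound \(\sum_k m(z_k)\) by \(\varepsilon\). You instead stay on the infimum side: you invoke the standard fact that in a ccc complete Boolean algebra every infimum is already attained on a countable subset, use directedness to upgrade that countable subfamily to a decreasing chain \(\sup B_0\geq\sup B_1\geq\cdots\) with infimum exactly \(d_{\mathcal{A}}\), and finish with continuity of the measure from above. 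Your route avoids the complementation and the antichain refinement at the cost of quoting (and correctly flagging as the one nontrivial input) the countable-cofinality fact for infima in ccc complete algebras, which itself is proved by essentially the paper's antichain-refinement trick; so the two arguments are dual packagings of the same idea. All the individual steps you supply---monotonicity of \(\sup\) under refinement, the recursive construction of the \(B_n\), and the telescoping partition of \(\sup B_0 - d_{\mathcal{A}}\) justifying continuity from above---check out, and you correctly identify ccc as the essential hypothesis, consistent with the paper's remark about measure algebras of uncountable weight.
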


\begin{proof}
By passing to complements it suffices to prove the following statement: if \(I\)
is an upward directed subset of \(\B\) all of whose elements have measure less than
\(\varepsilon\) then \(\sup I\) has measure at most \(\varepsilon\).

Using the fact that \(\B\) is complete, we can refine \(I\) to an antichain \(Z\) that
satisfies \(\sup I=\sup Z\). Since \(\B\) is ccc, \(Z\) must be countable. Applying the
upward directedness of \(I\) and the countable additivity of the measure, we can conclude
that \(m(\sup Z)\leq \varepsilon\).\qedhere
\end{proof}

We are finally ready to state and prove the preservation theorem we have been building
towards.

\begin{theorem}
\label{thm:RandomGivesgrMA}
Assume Martin's Axiom holds in \(V\) and let \(V[r]\) be obtained by adding a random 
real to \(V\). Then \(V[r]\) satisfies the grounded Martin's Axiom over the ground 
model \(V\).
\end{theorem}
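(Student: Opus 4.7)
The plan is to follow the template of Theorem~\ref{thm:CohenPreservesgrMA}, but using the \(\varepsilon\)-deficient termspace machinery of Lemmas~\ref{lemma:DeficientFiniteMixDenseTranslation}--\ref{lemma:MeasureOfSupOfDirectedInfs}. I may assume \(\lnot\mathrm{CH}\) in \(V\), since under CH the conclusion is vacuous; then any \(\Q\in V\) that is ccc in \(V[r]\) is automatically Knaster in \(V\) by Lemma~\ref{lemma:grMACccKnaster}. Fix \(\mathcal{D}=\set{D_\alpha}{\alpha<\lambda}\in V[r]\) with \(\lambda<\c^{V[r]}=\c^V\), and choose \(\Bnull\)-names \(\dot{D}_\alpha\in V\) forced to be dense subsets of \(\Q\).

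For each \(n<\omega\) I would consider \(\R_n=\Termfin^{1/2^n}(\Bnull,\Q)\), which is Knaster in \(V\) by Lemma~\ref{lemma:RandomDeficientMixKnaster}, together with the dense sets \(E_\alpha\subseteq\R_n\) provided by Lemma~\ref{lemma:DeficientFiniteMixDenseTranslation}. Applying \ma in \(V\) to each \(\R_n\) with these \(\lambda\) many dense sets produces, inside \(V\), a filter \(H_n\subseteq\R_n\) meeting every \(E_\alpha\). The key measure-theoretic step is that \(\set{\sup A_\tau}{\tau\in H_n}\) is downward directed in \(\Bnull\)---for any two conditions in the filter a common lower bound \(\rho\in H_n\) has resolving antichain refining both---so Lemma~\ref{lemma:MeasureOfSupOfDirectedInfs} yields \(d_n:=\inf\set{\sup A_\tau}{\tau\in H_n}\) of measure at least \(1-1/2^n\). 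It follows that \(\bigvee_n d_n=1\) in \(\Bnull\), so the \(V\)-random real \(r\) must lie in some \(d_n\); define \(n^*=n^*(r)\in V[r]\) to be least such.

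In \(V[r]\) I would then define the filter \(F\subseteq\Q\) generated by \(\set{\tau^r}{\tau\in H_{n^*}}\). Because \(r\in d_{n^*}\leq\sup A_\tau\) for every \(\tau\in H_{n^*}\), each \(\tau^r\) is well-defined, and any two are compatible in \(\Q\): a common lower bound \(\rho\in H_{n^*}\) of \(\tau\) and \(\tau'\) satisfies \(\rho^r\leq\tau^r,\tau'^r\). For each \(\alpha<\lambda\), any \(\tau_\alpha\in H_{n^*}\cap E_\alpha\) satisfies \(\sup A_{\tau_\alpha}\forces\tau_\alpha\in\dot{D}_\alpha\) and has \(r\) in its support, giving \(\tau_\alpha^r\in D_\alpha\cap F\). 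The hard part I anticipate is the tension between the deficiency parameter and the genericity of \(r\): a single termspace \(\Termfin^\varepsilon(\Bnull,\Q)\) can never provide a common support of full measure, so using one poset would leave \(r\) missing the witness with positive probability. Splitting the argument across the sequence \(\R_n\) with \(\varepsilon_n=1/2^n\to 0\), and extracting all witnesses from the single filter \(H_{n^*(r)}\), is what will simultaneously secure measure-\(1\) coverage and the pairwise compatibility in \(\Q\) needed to form a genuine filter.
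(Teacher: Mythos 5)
Your proposal is correct and runs on exactly the paper's machinery: reduce to \(\Q\) Knaster in \(V\) (the paper uses \(\ma+\lnot\mathrm{CH}\) directly rather than citing lemma~\ref{lemma:grMACccKnaster}, but it is the same fact), pass to the \(\varepsilon\)-deficient termspace poset \(\Termfin^\varepsilon(\Bnull,\Q)\), use lemmas~\ref{lemma:DeficientFiniteMixDenseTranslation} and~\ref{lemma:RandomDeficientMixKnaster} to get the dense sets \(E_\alpha\) and the Knaster property, apply \ma in \(V\), and control the measure of \(d_{\mathcal{A}}\) via lemma~\ref{lemma:MeasureOfSupOfDirectedInfs}. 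The only divergence is the endgame. The paper argues by contradiction: it fixes a single condition \(b_0\) forcing that no \(\mathcal{D}\)-generic filter exists, chooses one \(\varepsilon<\mu(b_0)\), and notes that the resulting \(d_{\mathcal{A}}\), of measure at least \(1-\varepsilon\), is compatible with \(b_0\) yet forces the opposite statement. You instead run the construction for \(\varepsilon_n=2^{-n}\), observe \(\bigvee_n d_n=1\) so that the random real lands below some \(d_{n^*}\), and read the generic filter off \(H_{n^*}\) directly in \(V[r]\). Your version is constructive and needs no condition forcing failure, at the cost of countably many applications of \ma and the small extra predensity argument for \(\{d_n\}\); the paper's is shorter. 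Both versions rely, at the same level of detail, on arranging the chosen resolving antichains to be downward directed under refinement and on the witnessing antichains for the order relation being compatible with \(d_{\mathcal{A}}\) (resp.\ \(d_{n^*}\)) so that the evaluations form a filter, so you are not omitting anything the paper supplies.
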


\begin{proof}
If \(\mathrm{CH}\) holds in \(V\) then it holds in \(V[r]\) as well, implying that
\(V[r]\) satisfies the full Martin's Axiom. We may therefore assume without loss of 
generality that \(V\) satisfies \(\ma+\lnot\mathrm{CH}\).

Assume toward a contradiction that \(V[r]\) does not satisfy the grounded Martin's
Axiom over \(V\). Then there exist
a poset \(\Q\in V\) which is ccc in \(V[r]\), a cardinal \(\kappa<\c\) and a collection
\(\mathcal{D}=\set{D_\alpha}{\alpha<\kappa}\in V[r]\) of dense subsets of \(\Q\) such that
\(V[r]\) has no \(\mathcal{D}\)-generic filters on \(\Q\). There must be a condition
\(b_0\in\Bnull\) forcing this. Let \(\varepsilon<\mu(b_0)\).

Since \(\Bnull\) is ccc, \(\Q\) must be ccc in \(V\) and, since \(\ma+\lnot\mathrm{CH}\) 
holds there, is also Knaster there. 
Thus \(\Termfin^\varepsilon(\Bnull,\Q)\in V\) is Knaster by 
lemma~\ref{lemma:RandomDeficientMixKnaster}. We now choose names 
\(\dot{D}_\alpha\)
for the dense sets \(D_\alpha\) such that \(\Bnull\) forces that the \(\dot{D}_\alpha\) are dense
subsets of \(\Q\). Then, by lemma~\ref{lemma:DeficientFiniteMixDenseTranslation}, the
\(E_\alpha\) are dense in \(\Termfin^\varepsilon(\Bnull,\Q)\), where \(E_\alpha\) is defined
from \(\dot{D}_\alpha\) as in that lemma. We can thus obtain, using Martin's Axiom 
in \(V\), a filter
\(H\) on \(\Termfin^\varepsilon(\Bnull,\Q)\) which meets all of the \(E_\alpha\).

Pick a resolving antichain \(A_\tau\) for each \(\tau\in H\) and consider 
\(\mathcal{A}=\set{A_\tau}{\tau\in H}\). This family satisfies the hypotheses of
lemma~\ref{lemma:MeasureOfSupOfDirectedInfs}, whence we can conclude that
\(\mu(d_{\mathcal{A}})\geq 1-\varepsilon\), where \(d_{\mathcal{A}}\) is defined as in
that lemma.
Interpreting \(H\) as a \(\Bnull\)-name for a subset of \(\Q\), we now observe that 
\[d_{\mathcal{A}}\forces_{\Bnull}
\text{``\(H\) generates a \(\dot{\mathcal{D}}\)-generic filter on \(\Q\)''}\]
Now, crucially, since we have chosen \(\varepsilon<\mu(b_0)\), the conditions \(b_0\) and
\(d_H\) must be compatible in \(\Bnull\). But this is a contradiction, since they
force opposing statements. Therefore \(V[r]\) really does satisfy
the grounded Martin's Axiom over \(V\).
\end{proof}

\begin{corollary}
\label{cor:grMAConsistentWithNoSuslinTrees}
The grounded Martin's Axiom is consistent with there being no Suslin trees.
\end{corollary}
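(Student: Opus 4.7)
The plan is to realise the desired model as a random-real extension of a model of $\ma+\lnot\mathrm{CH}$, so that the grounded axiom comes for free from Theorem~\ref{thm:RandomGivesgrMA} while the absence of Suslin trees is inherited from the ground model through a preservation argument. First I would fix some $V_0\models\ma+\lnot\mathrm{CH}$, produced by the standard Solovay--Tennenbaum iteration over any convenient base; in $V_0$ the full Martin's Axiom implies that every Aronszajn tree is special, and in particular there are no Suslin trees in $V_0$. Letting $r$ be random over $V_0$ and $V_1=V_0[r]$, Theorem~\ref{thm:RandomGivesgrMA} gives directly that $V_1\models\grma$ over $V_0$, so the only thing to verify is that no Suslin tree appears in $V_1$.

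For a tree $T\in V_0$ the argument is immediate: $\ma+\lnot\mathrm{CH}$ in $V_0$ supplies a specializing function $f\colon T\to\omega$, and since the tree order on $T$ is absolute between $V_0$ and $V_1$, the same $f$ still specializes $T$ in $V_1$. Hence no ground-model $\omega_1$-tree can be Suslin in $V_1$. The substantive case is a potential Suslin tree $T\in V_1\setminus V_0$. Here my plan is to invoke the preservation statement that random real forcing over a model of $\ma+\lnot\mathrm{CH}$ does not add a Suslin tree: a $\Bnull$-name $\dot T$ lives in $V_0$, and the plan is to use the ccc of $\Bnull$ together with Martin's Axiom in $V_0$, applied to a natural ccc poset built from $\dot T$ which searches for large antichains of $\dot T$, to produce within $V_0$ a name for an uncountable antichain of $\dot T$ forced by a condition of positive measure; this contradicts any $\Bnull$-condition forcing $\dot T$ to be Suslin.

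The main obstacle is precisely this last preservation step: the direct translation of the $\mathrm{Knaster}/\Bnull$-absoluteness ideas from the previous section to the tree setting is more delicate than for arbitrary ccc posets, because the relevant ``dense set'' structure on $\dot T$ lives at the uncountable level and has to be finessed by the $\varepsilon$-deficient mixture machinery of Lemmas~\ref{lemma:RandomDeficientMixKnaster} and \ref{lemma:MeasureOfSupOfDirectedInfs}. Once this preservation is in hand, the corollary follows immediately from the two cases above.
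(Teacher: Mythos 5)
Your overall route is the same as the paper's: start from a model of $\ma+\lnot\mathrm{CH}$, add a random real, and get \grma from Theorem~\ref{thm:RandomGivesgrMA}; the only issue is the absence of Suslin trees in the extension. The paper disposes of that entire issue by citing Laver's theorem that adding a random real to a model of $\mathrm{MA}_{\aleph_1}$ produces a model with no Suslin trees (\cite{Laver1987:RandomRealsSuslinTrees}); note that this single citation already covers both of your cases, so the case split between ground-model trees and new trees is unnecessary. The one place where your write-up falls short of a proof is exactly the step you flag as ``the main obstacle'': the claim that random forcing over a model of $\ma+\lnot\mathrm{CH}$ adds no Suslin tree is precisely Laver's theorem, and it is a genuinely nontrivial result whose proof is considerably more delicate than ``apply $\ma$ in $V_0$ to a natural ccc poset built from $\dot T$ that searches for large antichains'' --- the difficulty is establishing that any such auxiliary poset is ccc when $\dot T$ is forced to be Suslin, and the known argument requires a careful measure-theoretic analysis (in the spirit of, but not reducible to, Lemmas~\ref{lemma:RandomDeficientMixKnaster} and~\ref{lemma:MeasureOfSupOfDirectedInfs}). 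So either cite Laver's theorem, in which case your proof collapses to the paper's one-line argument, or accept that the preservation step remains an unproved gap as written.
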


\begin{proof}
If Martin's Axiom holds in \(V\) and \(r\) is random over \(V\) then \(V[r]\) satisfies
the grounded Martin's Axiom by the above theorem and also has no Suslin trees by a 
theorem of Laver~\cite{Laver1987:RandomRealsSuslinTrees}.
\end{proof}

Unfortunately, the employed techniques do not seem to yield the full preservation result 
as in theorem~\ref{thm:CohenPreservesgrMA}. If \(V\) satisfied merely the grounded
Martin's Axiom over a ground model \(W\) we would have to argue that the
poset \(\Termfin^\varepsilon(\Bnull,\Q)\) as computed in \(V\) was actually an element of \(W\), so
that we could apply \grma to it. But we cannot expect this to be true if passing
from \(W\) to \(V\) added reals; not only will the termspace posets be computed
differently in \(W\) and in \(V\), even the random Boolean algebras of these two
models will be different. Still, these considerations lead us to the following
improvement to the theorem above.

\begin{theorem}
\label{thm:RandomPreservesDistributivegrMA}
Assume the grounded Martin's Axiom holds in \(V\) over the ground model \(W\)
via a forcing which is countably distributive (or, equivalently, does not add reals),
and let \(V[r]\) be obtained by adding a random real to \(V\). Then \(V[r]\) also
satisfies the grounded Martin's Axiom over the ground model \(W\).
\end{theorem}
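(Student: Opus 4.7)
The strategy is to rerun the proof of theorem~\ref{thm:RandomGivesgrMA} verbatim, with the appeal to \ma in $V$ replaced by an appeal to \grma in $V$ over $W$. The countable distributivity of the extension $W\subseteq V$ is precisely the hypothesis needed to feed the relevant termspace poset back into the grounded axiom, resolving the obstacle flagged in the paragraph preceding the theorem.

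First I would verify the side condition of \grma: $V[r]$ is a ccc extension of $W$, via $\P * \Bnull$, where $\P\in W$ is the ccc forcing producing $V$ and $\Bnull$ is computed either in $W$ or in $V$ (it makes no difference since $\P$ adds no reals). I would also dispose of the case where $\mathrm{CH}$ holds in $V[r]$, as then the full \ma holds there and there is nothing to prove. So I may assume $\neg\mathrm{CH}$ holds in both $V$ and $V[r]$; the usual nice-name count, together with ccc-ness of $\Bnull$, then yields $\c^{V[r]}=\c^V$.

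Fix $\Q\in W$ ccc in $V[r]$ and a family $\mathcal{D}=\set{D_\alpha}{\alpha<\kappa}\in V[r]$ of $\kappa<\c^{V[r]}$ many dense subsets of $\Q$. Assume toward contradiction that no $\mathcal{D}$-generic exists in $V[r]$, witnessed by some $b_0\in\Bnull$, and fix $\varepsilon$ with $\varepsilon<\mu(b_0)$. Since $\Bnull$ is ccc and $\neg\mathrm{CH}$ holds in $V$, lemma~\ref{lemma:grMACccKnaster} makes $\Q$ Knaster in $V$, and then lemma~\ref{lemma:RandomDeficientMixKnaster} makes $\R=\Termfin^\varepsilon(\Bnull,\Q)$ Knaster in $V$. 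The crucial new step is to verify that $\R\in W$: because $\P$ adds no reals we have $\Bnull^V=\Bnull^W$, and if we adopt a canonical presentation of each finite $\Bnull$-mixture as a finite set of pairs $(\check{x},w)$ with $x\in\Q$ and $w$ ranging over a finite resolving antichain of $\Bnull$, then both the underlying set and the ordering of $\R$ are absolute between $W$ and $V$. Hence $\R\in W$ and remains ccc in $V$, so \grma applies to it.

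Choose names $\dot{D}_\alpha\in V$ for the $D_\alpha$, let $E_\alpha\subseteq\R$ be the dense set produced from $\dot{D}_\alpha$ via lemma~\ref{lemma:DeficientFiniteMixDenseTranslation}, and apply \grma in $V$ over $W$ (using $\kappa<\c^V$) to obtain a filter $H\subseteq\R$ meeting every $E_\alpha$. The rest of the argument copies theorem~\ref{thm:RandomGivesgrMA} line for line: by lemma~\ref{lemma:MeasureOfSupOfDirectedInfs}, the element $d_{\mathcal{A}}=\inf\set{\sup A_\tau}{\tau\in H}$ has measure at least $1-\varepsilon$ and forces that $H$ generates a $\dot{\mathcal{D}}$-generic filter on $\check{\Q}$, contradicting $b_0$ since $b_0$ and $d_{\mathcal{A}}$ are then compatible in $\Bnull$. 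The main obstacle is precisely the absoluteness of $\R$ between $W$ and $V$; every other ingredient is already present in the preceding lemmas.
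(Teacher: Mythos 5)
Your proposal is correct and follows essentially the same route as the paper: use the countable distributivity of $\P$ to see that $\Bnull$ and $\Termfin^\varepsilon(\Bnull,\Q)$ are computed identically in $W$ and $V$, then rerun the proof of theorem~\ref{thm:RandomGivesgrMA} with \grma over $W$ in place of \ma. The only cosmetic difference is in how the absoluteness of the termspace poset is justified (the paper notes that elements of $\Q$ have countable nice names, which a countably distributive forcing cannot add, whereas you reduce finite mixtures to a canonical finite presentation); both work.
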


\begin{proof}
By assumption there is a ccc countably distributive poset \(\P\in W\) such that
\(V=W[G]\) for some \(W\)-generic \(G\subseteq \P\). Since \(W\) and \(V\) thus have
the same reals, they must also have the same Borel sets. Furthermore, since the
measure is inner regular, a Borel set having positive measure is witnessed by a
positive measure compact (i.e.\ closed) subset, which means that \(W\) and \(V\)
agree on which Borel sets are null. It follows that the random Boolean algebras
as computed in \(W\) and in \(V\) are the same.

Now let \(0<\varepsilon<1\) and let \(\Q\in W\) be a poset which is ccc in \(V\).
We claim that \(V\) and \(W\) compute the poset \(\Termfin^\varepsilon(\Bnull,\Q)\)
the same. Clearly any \(\varepsilon\)-deficient finite mixture in \(W\)
is also such in \(V\), so we really only need to see that \(V\) has no new such
elements. But \(\Bnull\) is ccc, which means that elements of \(\Q\) have countable
nice names and these could not have been added by \(G\). So \(V\) and \(W\) in fact
agree on the whole termspace poset \(\Term(\Bnull,\Q)\), and therefore also
on the \(\varepsilon\)-deficient finite mixtures.

The rest of the proof proceeds as in theorem~\ref{thm:RandomGivesgrMA}. The key
step there, where we apply Martin's Axiom to the poset 
\(\Termfin^\varepsilon(\Bnull,\Q)\), goes through, since we have shown that
this poset is in \(W\) and we may therefore apply \grma to it.
\end{proof}

Just as in theorem~\ref{thm:CohenPreservesgrMA} we may replace the grounded Martin's
Axiom in the above theorem with its local version.

It is not immediately obvious that the hypothesis of the above theorem is
ever satisfied in a nontrivial way, that is, whether \grma can ever hold via a
\emph{nontrivial} countably distributive extension. The following theorem,
due to Larson, shows
that this does happen and gives yet another construction of a model of \grma.
For the purposes of this theorem we shall call a Suslin tree \(T\) \emph{homogeneous}
if for any two nodes \(p,q\in T\) of the same height, the cones below them are
isomorphic. Note that homogeneous Suslin trees may be constructed from \(\diamondsuit\).

\begin{theorem}[Larson~\cite{Larson1999:SmaxVariationForOneSuslinTree}]
\label{thm:grMAAfterSuslinTree}
Let \(\kappa>\omega_1\) be a regular cardinal satisfying \(\kappa^{<\kappa}=\kappa\)
and let \(T\) be a homogeneous Suslin tree. Then there is a ccc poset \(\P\) such that, given a
\(V\)-generic \(G\subseteq\P\), the tree \(T\) remains Suslin in \(V[G]\) and, if
\(b\) is a generic branch through \(T\), the extension \(V[G][b]\) satisfies
\(\c=\kappa\) and the grounded Martin's Axiom over the ground model \(V[G]\).
\end{theorem}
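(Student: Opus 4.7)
The plan is to adapt the Solovay--Tennenbaum iteration by restricting the iterands to ccc posets $\dot{\Q}_\alpha$ whose product with $T$ is ccc, so that $T$ stays Suslin throughout, and to add $b$ only at the end as a separate, countably distributive step. Concretely, in $V$ I would build a finite-support ccc iteration $\P = \langle \P_\alpha, \dot{\Q}_\alpha : \alpha < \kappa \rangle$ where each $\dot{\Q}_\alpha$ is a $\P_\alpha$-name for a poset of hereditary size less than $\kappa$ that is forced to be ccc in product with $T$. Since productive ccc-ness is preserved by finite-support iterations with productively ccc iterands, $\P$ is itself ccc and $T$ remains Suslin (hence countably distributive) in $V[G]$. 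The assumption $\kappa^{<\kappa} = \kappa$ supports a bookkeeping that enumerates, for each $\alpha < \kappa$, all pairs $(\sigma, \vec{\tau})$ with $\sigma$ a nice $\P_\alpha$-name for a poset of size less than $\kappa$ and $\vec{\tau}$ a length-$<\kappa$ sequence of $(\P_\alpha * T)$-names for dense subsets of $\sigma$. Interleaving copies of $\Add(\omega,1)$, which satisfy the preservation requirement since $T$ remains Knaster after Cohen forcing, secures $\c = \kappa$ in $V[G]$, and hence also in $V[G][b]$ since $T$ adds no reals.

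To verify \grma in $V[G][b]$ over $V[G]$, let $\Q \in V[G]$ be ccc in $V[G][b]$ and let $\mathcal{D} \in V[G][b]$ be a family of fewer than $\kappa$ dense subsets of $\Q$. Homogeneity of $T$ combined with ccc-ness of $\Q$ in $V[G][b]$ yields that $T \forces$ ``$\check{\Q}$ is ccc'', equivalently that $T \times \Q$ is ccc in $V[G]$. Picking $T$-names $\dot{D}_\beta$ in $V[G]$ for the members of $\mathcal{D}$ and pushing them down to an appropriate initial stage, the pair $(\Q, (\dot{D}_\beta)_\beta)$ is handled by the bookkeeping at some stage $\alpha$; setting $\dot{\Q}_\alpha = \Q$ the iteration adds a $V[G_\alpha]$-generic filter $H_\alpha \subseteq \Q$ inside $V[G]$, which is the candidate $\mathcal{D}$-generic filter in $V[G][b]$.

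The main obstacle is that $H_\alpha$ is only generic over $V[G_\alpha]$, while the sets $\dot{D}_\beta[b]$ cannot be evaluated without $b$, so one must show $H_\alpha \cap \dot{D}_\beta[b] \neq \emptyset$ for every $\beta < \lambda$. The idea is to leverage countable distributivity of $T$ and its homogeneity to replace each $\dot{D}_\beta$ by a genuine dense subset of $\Q$ in $V[G_\alpha]$: for each $\beta$ the set $E_\beta = \set{p \in \Q}{\{t \in T : t \forces p \in \dot{D}_\beta\} \text{ is predense in } T}$ lies in $V[G_\alpha]$, and homogeneity of $T$ together with density of $\dot{D}_\beta$ in $\Q$ should force $E_\beta$ to be dense in $\Q$; then $H_\alpha$ meets $E_\beta$, and genericity of $b$ over $V[G]$ picks out witnesses lying in $\dot{D}_\beta[b]$. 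Executing this uniformly across all $\beta < \lambda$ and every possible target $(\Q, \mathcal{D})$ is the delicate core of the argument and invokes Larson's specific machinery for ccc forcing over a homogeneous Suslin tree.
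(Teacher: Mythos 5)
Your iteration setup, the use of homogeneity to pass from ``some condition of \(T\) forces \(\check{\Q}\) to be ccc'' to ``\(\Q\times T\) is ccc in \(V[G]\)'', and the observation that \(T\) stays Suslin all match the intended argument. The gap is exactly in the step you flag as the delicate core: the set \(E_\beta=\set{p\in\Q}{\set{t\in T}{t\forces p\in\dot{D}_\beta}\text{ is predense in }T}\) need not be dense in \(\Q\) --- it can be empty. Take \(\Q=\Add(\omega,1)\) (productively ccc, so a legitimate target), fix an infinite maximal antichain \(\{t_n\st n<\omega\}\) of \(T\), and let \(\dot{D}\) be the \(T\)-name with \(t_n\forces\dot{D}=\check{D}_n\), where \(D_n=\set{p}{p(n)=1}\). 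Each \(D_n\) is dense, so \(T\) forces \(\dot{D}\) to be dense; but if \(p\notin D_n\) then every condition compatible with \(t_n\) has an extension below \(t_n\) forcing \(p\notin\dot{D}\), so no condition compatible with \(t_n\) forces \(p\in\dot{D}\). Hence \(p\in E\) would require \(p(n)=1\) for all \(n\), which no finite condition satisfies. No single dense subset of \(\Q\) in \(V[G]\) can absorb such a name, and homogeneity does not rescue this.

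The key fact your proposal never invokes is the countable distributivity of the Suslin tree \(T\) over \(V[G]\). Since \(\Q\) is ccc in \(V[G][b]\), every maximal antichain of \(\Q\) in \(V[G][b]\) is countable, hence --- being a countable subset of a set in \(V[G]\) --- already lies in \(V[G]\). So each \(D\in\mathcal{D}\) may be replaced by the downward closure of a maximal antichain \(A\subseteq D\) with \(A\in V[G]\), and the resulting family of fewer than \(\kappa\) elements of \(V[G]\) can be covered (using that \(T\) is ccc) by a family \(\widetilde{\mathcal{D}}\in V[G]\) of the same size. In the counterexample above this amounts to including all of the \(D_n\) rather than seeking one set that works for every branch. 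One then applies, \emph{inside} \(V[G]\), the restricted Martin's Axiom that the iteration was designed to force --- namely MA for posets \(\Q\in H_\kappa^{V[G]}\) with \(\Q\times T\) ccc, for families of dense sets lying in \(V[G]\) --- to obtain a \(\widetilde{\mathcal{D}}\)-generic filter in \(V[G]\), which is then \(\mathcal{D}\)-generic in \(V[G][b]\) and witnesses \((\ast)\) of lemma~\ref{lemma:grMASmallPosets}. This also removes the need to thread \((\P_\alpha*T)\)-names for dense sets through the bookkeeping: the dense sets handled by the iteration all live in \(V[G]\), and the passage to \(V[G][b]\) happens afterwards, once and for all.
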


\begin{proof}
The idea is to attempt to force \(\ma +\c=\kappa\), but only using posets that
preserve the Suslin tree \(T\). More precisely, fix a well-order \(\triangleleft\)
of \(H_\kappa\) of length \(\kappa\) and define \(\P\) as the length \(\kappa\) finite
support iteration which forces at stage \(\alpha\) with the next \(\P_\alpha\)-name
for a poset \(\dot{\Q}_\alpha\) such that \(\P_\alpha\) forces that 
\(T\times\dot{\Q}_\alpha\) is ccc.

Let \(G\subseteq\P\) be \(V\)-generic. It is easy to see by induction
that \(\P_\alpha\times T\) is ccc for all \(\alpha\leq\kappa\); 
the successor case is clear from the definition of the iteration \(\P\)
and the limit case follows by a \(\Delta\)-system argument. We can thus conclude that
\(T\) remains a Suslin tree in \(V[G]\). Furthermore, standard arguments show that
there are exactly \(\kappa\) many reals in \(V[G]\) and that this extension satisfies
Martin's Axiom for small posets which preserve \(T\), i.e.\ those \(\Q\) such that
\(\Q\in H_\kappa^{V[G]}\) and \(\Q\times T\) is ccc.

Finally, let us see that adding a branch
\(b\) through \(T\) over \(V[G]\) yields a model of the grounded Martin's Axiom over
\(V[G]\). Thus let \(\Q\in V[G]\) be a poset which is ccc in \(V[G][b]\) and has size
less than \(\kappa\) there. There is a condition in \(T\) forcing that \(\Q\) is ccc,
so by our homogeneity assumption \(T\) forces this, meaning that \(\Q\times T\) is ccc
in \(V[G]\).
The key point now is that, since \(T\) is countably
distributive, all of the maximal antichains (and open dense subsets) of \(\Q\) in 
\(V[G][b]\) are already in \(V[G]\). Furthermore, any collection \(\mathcal{D}\) 
of less than \(\kappa\)
many of these in \(V[G][b]\) can be covered by some \(\widetilde{\mathcal{D}}\)
in \(V[G]\) of the same size. Our observation from the previous paragraph then
yields a \(\widetilde{\mathcal{D}}\)-generic filter for \(\Q\) in \(V[G]\)
and therefore \(V[G][b]\) satisfies \grma over \(V[G]\) by lemma~\ref{lemma:grMASmallPosets}.
\end{proof}

Starting from a Suslin tree with a stronger homogeneity property, Larson
also shows that there are no Suslin trees in the extension \(V[G][b]\) above.
This gives an alternative proof of corollary~\ref{cor:grMAConsistentWithNoSuslinTrees}.

From the argument of theorem~\ref{thm:grMAAfterSuslinTree} we can actually extract 
another preservation result for \grma.

\begin{theorem}
\label{thm:SuslinPreservesgrMA}
Assume the grounded Martin's Axiom holds in \(V\) over the ground model \(W\)
and let \(T\in V\) be a Suslin tree. If \(b\subseteq T\) is a generic branch
then \(V[b]\) also satisfies the grounded Martin's Axiom over the ground model \(W\).
\end{theorem}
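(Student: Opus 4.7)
The plan is to extract the preservation argument that is essentially already present in the second half of the proof of theorem~\ref{thm:grMAAfterSuslinTree}, exploiting the two key properties of the Suslin tree $T$ in $V$: it is ccc and countably distributive. In particular, since $T$ adds no reals, automatically $\c^{V[b]}=\c^V$.

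Let $\Q\in W$ be ccc in $V[b]$ and let $\mathcal{D}=\set{D_\alpha}{\alpha<\kappa}\in V[b]$ be a family of fewer than $\c^{V[b]}$ dense subsets of $\Q$. The first thing to observe is that $\Q$ is also ccc in the intermediate model $V$ (antichains from $V$ survive to $V[b]$), so it is a legitimate target for \grma in $V$. Working in $V[b]$, I would pick for each $\alpha$ a maximal antichain $A_\alpha\subseteq D_\alpha$; by ccc of $\Q$ in $V[b]$ each $A_\alpha$ is countable, and by countable distributivity of $T$ each $A_\alpha$ therefore already lies in $V$. Since a filter meets $A_\alpha$ if and only if it meets the dense set $\set{q\in\Q}{\exists a\in A_\alpha\colon q\leq a}\subseteq D_\alpha$, the problem reduces to producing a filter $H\in V$ meeting every $A_\alpha$.

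The subtle point is that the indexed family $\alpha\mapsto A_\alpha$ itself need not lie in $V$, so the next step is a nice-name argument. Fixing a $T$-name $\dot{f}\in V$ for this function, the ccc-ness of $T$ in $V$ gives, for each $\alpha$, a countable set $X_\alpha\in V$ of possible values of $A_\alpha$, obtained by mixing over a countable maximal antichain of $T$ deciding $\dot{f}(\check{\alpha})$. The assignment $\alpha\mapsto X_\alpha$ is definable in $V$, so $\widetilde{\mathcal{A}}=\bigcup_{\alpha<\kappa}X_\alpha\in V$ has size at most $\kappa\cdot\omega<\c^V$ and contains every $A_\alpha$. Discarding from $\widetilde{\mathcal{A}}$ those elements whose downward closure is not dense in $\Q$ (which preserves the genuine $A_\alpha$), I would then apply \grma in $V$ to $\Q\in W$ and the family of downward closures of the surviving elements; the resulting filter $H\in V\subseteq V[b]$ meets each such downward closure, hence each $A_\alpha$, hence each $D_\alpha$, as required. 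The main obstacle is really just this nice-name accounting: individual antichains come into $V$ for free by countable distributivity of $T$, but bounding the entire indexed family by a single object of $V$ forces one to separately invoke the ccc of $T$.
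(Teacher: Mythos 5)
Your proposal is correct and is essentially the paper's own argument: the paper likewise observes that countable distributivity of \(T\) puts each maximal antichain of \(\Q\) into \(V\), that ccc-ness of \(T\) lets the whole indexed family be covered by a family of the same size in \(V\), and then applies \grma in \(V\). Your write-up merely makes the nice-name covering step explicit, which the paper leaves as a one-line remark referring back to theorem~\ref{thm:grMAAfterSuslinTree}.
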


\begin{proof}
The point is that, just as in the proof of theorem~\ref{thm:grMAAfterSuslinTree},
forcing with \(T\) does not add any new maximal antichains to posets from \(W\)
that remain ccc in \(V[b]\) and any collection of these antichains in \(V[b]\) can 
be covered by a collection of the same size in \(V\).
\end{proof}

Starting from a Suslin tree with a stronger homogeneity property, Larson also shows
that there are no Suslin trees at all in the extension \(V[G][b]\) above. This
shows that \(\grma+\lnot\ma+\text{``there are no Suslin trees''}\) is consistent
(although this is also true in our model from theorem~\ref{thm:RandomGivesgrMA} by
a result of Laver~\cite{Laver1987:RandomRealsSuslinTrees}). On the other hand,
our models from theorems~\ref{thm:CanonicalgrMA} and~\ref{thm:CohenPreservesgrMA}
show the consistency of \(\grma+\text{``there is a Suslin tree''}\) by a result of
Shelah~\cite{Shelah1984:TakeSolovayInaccessibleAway}. 

If \grma holds over a ground model that reals have been added to, it seems harder
to say anything about preservation after adding a further random real. Nevertheless,
we fully expect the answers to the following questions to be positive.

\begin{question}
Does adding a random real to a model of \(\grma\) preserve \(\grma\)? Does it preserve
it with the same witnessing ground model?
\end{question}

Generalizations of theorems~\ref{thm:CohenPreservesgrMA} and~\ref{thm:RandomGivesgrMA}
to larger numbers of reals added seem the natural next step in the exploration of
the preservation phenomena of the grounded Martin's Axiom. 
Such preservation results would also help in determining the compatibility
of \grma with various configurations of the cardinal characteristics on the left
side of Cichoń's diagram.
The constructions \(\Termfin\) and \(\Termfin^\varepsilon\)
seem promising, but obtaining a good chain condition in any case at all, except those shown,
has proven difficult.

\section{The grounded Proper Forcing Axiom}

We can, of course, also consider grounded versions of other forcing axioms. We
define one and note that similar definitions can be made for
\(\mathrm{grSPFA},\mathrm{grMM}\) and so on.

\begin{definition}
The \emph{grounded Proper Forcing Axiom} (\grpfa) asserts that \(V\) is a forcing
extension of some ground model \(W\) by a proper poset and \(V\) satisfies the
conclusion of the Proper Forcing Axiom for posets \(\Q\in W\) which are still
proper in \(V\).
\end{definition}

\begin{theorem}
\label{thm:CanonicalgrPFA}
Let \(\kappa\) be supercompact. Then there is a proper forcing extension that
satisfies \(\grpfa+\c=\kappa=\omega_2\) and in which \pfa, and even \ma, fails.
\end{theorem}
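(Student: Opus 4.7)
The plan is to mimic Baumgartner's consistency proof of $\pfa$ from a supercompact, restricted to ground-model proper posets in the same spirit that theorem~\ref{thm:CanonicalgrMA} mimics the Solovay--Tennenbaum construction. Fix a Laver function $f\colon\kappa\to V_\kappa$ and define a countable-support iteration $\P=\langle\P_\alpha,\dot\Q_\alpha\st\alpha<\kappa\rangle$ of length $\kappa$, setting $\dot\Q_\alpha=\check{f(\alpha)}$ whenever $f(\alpha)\in V$ is a poset which $\P_\alpha$ forces to be proper, and letting $\dot\Q_\alpha$ be trivial otherwise. Shelah's preservation theorem gives that $\P$ is proper; since $\kappa$ is supercompact and the iterands are in $V_\kappa$, a standard reflection argument gives $\P$ the $\kappa$-cc. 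Hence $\omega_1$ is preserved, $\kappa$ is preserved, and a nice-name count yields $\c^{V[G]}=\kappa$. Including the collapses $\Coll(\omega_1,\lambda)$ for $\omega_1\leq\lambda<\kappa$ in the range of $f$ (they are $\sigma$-closed, hence proper) forces $\kappa=\omega_2^{V[G]}$.

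To verify $\grpfa$ in $V[G]$, let $\Q\in V$ be a poset remaining proper in $V[G]$ and let $\mathcal D\in V[G]$ be a collection of $\aleph_1$ dense subsets of $\Q$. For $\lambda$ large enough to contain names for $\Q$ and $\mathcal D$, the Laver property yields an embedding $j\colon V\to M$ with $\mathrm{crit}(j)=\kappa$, $\funcs{\lambda}{M}\subseteq M$, and $j(f)(\kappa)=\Q$. Stage $\kappa$ of $j(\P)$ then forces precisely with $\Q$, and the standard lifting argument together with elementarity produces in $V[G]$ a filter on $\Q$ meeting all of $\mathcal D$.

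The main new task is to show that $\ma$ (and hence $\pfa$) fails in $V[G]$. For this I would prove a countable-support analogue of lemma~\ref{lemma:FSIterationFactorsAsProduct}: if the iterands of $\P$ come from $V$ and the first iterand is an absolutely proper poset such as Cohen forcing $\Add(\omega,1)$, then $\P$ is forcing equivalent to $\Add(\omega,1)\times\overline\P$, where $\overline\P$ is the analogous iteration skipping the first step. By arranging $f(0)=\Add(\omega,1)$, this exhibits $V[G]$ as a Cohen extension of an intermediate model in which $\c>\aleph_1$, and Roitman's theorem~\cite{Roitman1979:AddingRandomOrCohenRealEffectMA} immediately gives the failure of $\ma$.

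The main obstacle will be this factoring lemma. The finite-support argument in lemma~\ref{lemma:FSIterationFactorsAsProduct} rested on the symmetry in lemma~\ref{lemma:TwoStepIterationDecidesccc}, which allowed switching factors to decide ccc at the ground model; properness lacks this exchange symmetry. One softens the difficulty by noting that, because every iterand $f(\alpha)$ is itself a ground model object, the decision whether to force with $\check{f(\alpha)}$ depends only on preservation of properness along the iteration, which is absolute for sufficiently robust first-stage posets like Cohen forcing. Converting this intuition into a clean product factoring through countably infinite supports, while maintaining the chain conditions needed for cardinal preservation, is the delicate technical work; once it is in place, the rest of the argument mirrors the proof of theorem~\ref{thm:CanonicalgrMA}.
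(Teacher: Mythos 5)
There are two genuine gaps. First, your argument that \(\kappa\) becomes \(\omega_2\) by ``including the collapses \(\Coll(\omega_1,\lambda)\) in the range of \(f\)'' does not work: the iteration only forces with a ground-model poset if it is \emph{still proper at that stage}, and the paper's theorem~\ref{thm:NotProperAfterAddingReal} (due to Shelah) shows that a countably distributive poset of \(V\) collapsing \(\omega_2\) ceases to be proper in any ccc (in particular Cohen) extension of \(V\) that adds a real. Since your iteration adds Cohen reals cofinally, every \(\Coll(\omega_1,\lambda)\) with \(\lambda\geq\omega_2\) is rejected at all but boundedly many stages, so the collapsing never happens the way you intend. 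The correct argument is different: any nontrivial countable-support iteration adds a Cohen subset of \(\omega_1\) at stages of cofinality \(\omega_1\) and hence collapses the continuum of the intermediate model to \(\omega_1\) there; since the iteration is \(\kappa\)-cc and adds \(\kappa\) reals, each \(\lambda<\kappa\) is below the continuum of some intermediate model and is therefore collapsed by the tail.

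Second, the countable-support analogue of lemma~\ref{lemma:FSIterationFactorsAsProduct}, which you correctly identify as the crux of the \(\lnot\ma\) argument, is left unproved and there is no reason to expect it: the later iterands genuinely depend on the first-stage generic (which ground-model posets remain proper is not decided by \(\Add(\omega,1)\) via any exchange symmetry analogous to lemma~\ref{lemma:TwoStepIterationDecidesccc}), and countable supports do not split off a product factor the way finite supports do. The paper sidesteps this entirely by forcing with the \emph{external product} \(\R=\Add(\omega,1)\times\P\) rather than making Cohen forcing the first iterand; then \(V[G][H]=V[H][G]\) is literally a Cohen extension of the intermediate model \(V[H]\), CH fails there, and Roitman's theorem applies with no factoring lemma needed. (One then checks \(\R\) is proper because \(\P*\Add(\omega,1)\) is, and runs the lifting argument for \(\grpfa\) through the small forcing \(\Add(\omega,1)\) first.) Your lifting argument for \(\grpfa\) and the nice-name count for \(\c=\kappa\) are otherwise in line with the paper's.
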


\begin{proof}
Start with a Laver function \(\ell\) for \(\kappa\) and build a countable-support 
forcing iteration \(\P\) of length \(\kappa\) which forces at stage \(\alpha\)
with \(\dot{\Q}\), some full name for the poset \(\Q=\ell(\alpha)\) if it is proper at 
that stage and with trivial forcing otherwise. Note that \(\P\) is proper. 
Now let \(V[G][H]\) be a forcing extension by \(\R=\Add(\omega,1)\times\P\).
We claim that \(V[G][H]\) is the required model.

Since the Laver function \(\ell\) will quite often output the poset
\(\Add(\omega,1)\) and this will always be proper, the iteration \(\P\) will 
add reals unboundedly often. 
Furthermore, since \(\R\) is \(\kappa\)-cc, we will obtain \(\c=\kappa\) in \(V[G][H]\).

Next we wish to see that \(\kappa=\omega_2^{V[G][H]}\). For this it suffices to see
that any \(\omega_1<\lambda<\kappa\) is collapsed at some point during the iteration.
Recall the well-known fact that any countable-support iteration of nontrivial posets 
adds a Cohen subset of \(\omega_1\) at stages of cofinality \(\omega_1\) and therefore
collapses the continuum to \(\omega_1\) at those stages. Now fix some \(\omega_1<\lambda
<\kappa\). Since \(\P\) ultimately adds \(\kappa\) many reals and is \(\kappa\)-cc,
there is some stage \(\alpha\) of the iteration such that \(\P_\alpha\) has already
added \(\lambda\) many reals and therefore \(\c^{V[H_\alpha]}\geq\lambda\).
Since \(\P_\alpha\) is proper, the rest of the iteration \(\P\rest[\alpha,\kappa)\)
is a countable-support iteration in \(V[H_\alpha]\) and the fact mentioned above
implies that \(\lambda\) is collapsed to \(\omega_1\) by this tail of the iteration.

Note that \(\R\) is proper, since \(\P*\Add(\omega,1)\) is proper and has a dense subset
isomorphic to \(\R\). 
To verify that \grpfa holds in \(V[G][H]\) let \(\Q\in V\) be a poset that is
proper in \(V[G][H]\) and let \(\mathcal{D}=\set{D_\alpha}{\alpha<\omega_1}\in V[G][H]\)
be a family of dense subsets of \(\Q\). In \(V\) we can fix (for some large enough 
\(\theta\)) a \(\theta\)-supercompactness embedding \(j\colon V\to M\) such that
\(j(\ell)(\kappa)=\Q\). Since the Cohen real forcing is small, the embedding \(j\)
lifts to a \(\theta\)-supercompactness embedding \(j\colon V[G]\to M[G]\).
We can factor \(j(\P)\) in \(M[G]\) as \(j(\P)=\P*\Q*\Ptail\). Let
\(h*\Htail\subseteq \Q*\Ptail\) be \(V[G][H]\)-generic. As usual, we can now lift
the embedding \(j\) in \(V[G][H*h*\Htail]\) to 
\(j\colon V[G][H]\to \overline{M}=M[G][H*h*\Htail]\).
Note that the closure of this embedding implies that \(j[h]\in \overline{M}\).
But \(j[h]\) is a \(j(\mathcal{D})\)-generic filter on \(j(\Q)\) in
\(\overline{M}\) and so, by elementarity, there is a \(\mathcal{D}\)-generic filter
on \(\Q\) in \(V[G][H]\).

Finally, since we can see \(V[G][H]\) as obtained by adding a Cohen real to an 
intermediate extension and since CH fails there, PFA and even Martin's Axiom must fail 
there by Roitman's~\cite{Roitman1979:AddingRandomOrCohenRealEffectMA}.
\end{proof}

With regard to the above proof, we should mention that one usually argues that
\(\kappa\) becomes \(\omega_2\) after an iteration similar to ours because
at many stages the poset forced with was explicitly a collapse poset
\(\Coll(\omega_1,\lambda)\).
In our case, however, the situation is different. It turns out that
a significant number of proper posets from \(V\) (the collapse posets among them)
cease to be proper as soon as we add the initial Cohen real. 
Therefore the possibility of choosing
\(\Coll(\omega_1,\lambda)\) never arises in the construction of the iteration
\(\P\) and a different argument is needed. We recount a proof of this
fact below. The argument is essentially due to Shelah, as communicated
by Goldstern in~\cite{MO:Goldstern2015:PreservationOfProperness}.

\begin{theorem}[Shelah]
\label{thm:NotProperAfterAddingReal}
Let \(\P\) be a ccc poset and let \(\Q\) be a countably distributive poset which
collapses \(\omega_2\). Let \(G\subseteq\P\) be \(V\)-generic. If \(V[G]\)
has a new real then \(\Q\) is not proper in \(V[G]\).
\end{theorem}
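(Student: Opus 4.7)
The plan is a proof by contradiction. Suppose $\Q$ is proper in $V[G]$ and pick a new real $r \in V[G]\setminus V$. Since $\Q$ collapses $\omega_2^V$, fix a $\Q$-name $\dot f \in V$ for a surjection $\omega_1 \to \omega_2^V$. Choose a large $\theta$ and a countable $N \prec H_\theta^{V[G]}$ containing $r,\dot f,\Q,\P$ and the relevant parameters; by the assumed properness, pick $q\in\Q$ that is $(N,\Q)$-generic, and let $H\ni q$ be $V[G]$-generic on $\Q$. Since every $V$-dense subset of $\Q$ is $V[G]$-dense, $H$ is also $V$-generic, so $V[H]$ is well defined; a standard "slice" argument further shows $(G,H)$ is mutually $V$-generic for $\P\times\Q$, yielding $V[G]\cap V[H]=V$ inside $V[G][H]$.

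Two structural consequences of $(N,\Q)$-genericity will be used. Writing $\delta = N\cap\omega_1$ and $f = \dot f^H$, properness gives $f[\delta] = N\cap\omega_2^V$, so $f\rest\delta$ is a $\delta$-sequence of $V$-ordinals in $V[H]$; by the countable distributivity of $\Q$ over $V$, it lies already in $V$. Moreover, for any $\Q$-name $\sigma\in N$ for an ordinal (or, by distributivity, for an $\omega$-sequence of ordinals), the value $\sigma^H$ lies in $N$.

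The strategy is to use $r$ to define, for each $n<\omega$, a dense set $D_n^r\in N$ whose witnesses encode the bit $r(n)$ into a parity condition on $f$; for instance when $\Q=\Coll(\omega_1,\omega_2)$ one can take
\[
D_n^r = \{\,p\in\Q : \exists\alpha\in\dom(p),\ p(\alpha)\equiv r(n)\pmod 2\,\},
\]
which is dense since any condition can be extended by a fresh coordinate of prescribed parity. By $(N,\Q)$-genericity, $H$ meets $D_n^r\cap N$, producing a witness $\beta_n\in\delta$ with $f(\beta_n)\equiv r(n)\pmod 2$. The hoped-for contradiction is that, once the sequence $(\beta_n)_{n<\omega}$ is placed in $V[H]$, the values $f(\beta_n)\in\omega_2^V$ also lie in $V[H]$ (since $f\rest\delta\in V\subseteq V[H]$), whence $r$ is recoverable in $V[H]$; by countable distributivity $V[H]\cap 2^\omega = V\cap 2^\omega$, so $r\in V$, contradicting $r\notin V$.

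The main obstacle is precisely this last step: producing $(\beta_n)_n$ inside $V[H]$. A naive selection depends on $r$ through the very definition of $D_n^r$, and so a priori only sits in $V[G][H]$. I expect the resolution to consist in refining the dense sets via a $V$-fixed $\omega_1$-indexed family of potential slots $(\gamma_{n,k})_{n,k<\omega_1}\subseteq\omega_1$ and a canonical selection rule that can be read off from $H$ together with this $V$-data alone; elementarity of $N$ and the sharpness of $(N,\Q)$-genericity should then pin the canonical $(\beta_n)_n$ down as a $V[H]$-sequence. This is the technical heart of the argument.
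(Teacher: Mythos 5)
There is a genuine gap, and it is exactly where you flag it: the ``technical heart'' you defer is not a refinement of your argument but a different argument altogether. You have correctly isolated the two usable ingredients --- that an \((N,\Q)\)-generic condition forces \(\dot f[\delta]=N\cap\omega_2^V\) where \(\delta=N\cap\omega_1\), and that countable distributivity then places \(f\rest\delta\), hence \(N\cap\omega_2^V\), in \(V\) --- but your mechanism for converting this into \(r\in V\) cannot work. First, the parity dense sets carry no information: for \(\Q=\Coll(\omega_1,\omega_2)\) the generic surjection takes both even and odd values cofinally below \(\delta\) no matter what \(r\) is, so the existence of \(\beta_n<\delta\) with \(f(\beta_n)\equiv r(n)\pmod 2\) is automatic and decodes nothing. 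Second, any attempt to fix this by a cleverer selection rule runs into the circularity you already noticed: a rule computable from \(H\) and \(V\)-data alone cannot know which bit it is supposed to certify, while a rule that consults \(r\) only produces a sequence in \(V[G][H]\), which is useless since \(r\in V[G][H]\) trivially. The root cause is that you chose a single model \(N\) containing \(r\) and tried to code \(r\) into the \emph{generic object}; the information about \(r\) has to be coded into something that distributivity pushes down to \(V\), and the generic filter is not such an object in any way you control.

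The paper's proof places the coding in the \emph{choice of the model} instead. Working in \(V\), one builds (Shelah, \emph{Proper and Improper Forcing}, claim XV.2.12) a tree of countable models \(M_s\prec H_\theta\) indexed by \(s\in\funcs{<\omega}{2}\), increasing along branches, all with the same trace \(M_s\cap\omega_1=\delta\), but with ordinals \(\alpha_s<\beta_s<\omega_2\) such that \(\alpha_s\in M_{s\concat 0}\setminus M_{s\concat 1}\) and \(\beta_s\in M_{s\concat 1}\); the point of \(\beta_s\) is that no elementary extension of \(M_{s\concat 1}\) with the same \(\omega_1\)-trace can acquire \(\alpha_s\). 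The new real \(r\) then selects the branch model \(M=M_r[G]\prec H_\theta^{V[G]}\), which still has \(M\cap\omega_1=\delta\) because \(\P\) is ccc. Your two ingredients now apply verbatim to \(M\): an \(M\)-generic condition would force \(M\cap\omega_2^V=f[\delta]\in V\), and \(r\) is read off from \(M\cap\omega_2^V\) in \(V\) by checking \(\alpha_s\in M\) at each node along the branch --- a contradiction. So the decomposition ``genericity gives \(f[\delta]=M\cap\omega_2\), distributivity gives \(f\rest\delta\in V\)'' is right; what is missing is the perfect tree of models that makes \(M\cap\omega_2\) an injective code for \(r\).
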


\begin{proof}
Fix at the beginning a \(\Q\)-name \(\dot{f}\), forced to be a bijection between 
\(\omega_1\) and \(\omega_2^V\). Let \(\theta\) be a sufficiently large regular cardinal.
By claim XV.2.12 of~\cite{Shelah1998:ProperImproperForcing} we can label the nodes
\(s\in\funcs{<\omega}{2}\) with countable models \(M_s\prec H_\theta\) such that:
\begin{itemize}
\item the \(M_s\) are increasing along each branch of the tree \(\funcs{<\omega}{2}\);
\item \(\P,\Q,\dot{f}\in M_\emptyset\);
\item there is an ordinal \(\delta\) such that \(M_s\cap\omega_1=\delta\) for all \(s\);
\item for any \(s\) there are ordinals \(\alpha_s<\beta_s<\omega_2\) such that
\(\alpha_s\in M_{s\concat 0}\) and \(\alpha_s\notin M_{s\concat 1}\) and 
\(\beta_s \in M_{s\concat 1}\).
\end{itemize}

Now consider, in \(V[G]\), the tree of models \(M_s[G]\). By the argument given in
the proof of lemma~\ref{lemma:grMASmallPosets}, the models \(M_s[G]\) are elementary
in \(H_\theta^{V[G]}\) and, since \(\P\) is ccc, we still have
\(M_s[G]\cap\omega_1=\delta\). Let \(M=M_r[G]\) be the branch model determined by
the new real \(r\in V[G]\). We shall show that there are no \(M\)-generic
conditions in \(\Q\).

Suppose that \(q\) were such a generic condition. We claim that \(q\) forces that
\(\dot{f}\rest\delta\) maps onto \(M\cap\omega_2^V\) (note that \(\dot{f}\) still names
a bijection \(\omega_1\to\omega_2^V\) over \(V[G]\)). First, suppose that \(q\) does not
force that \(\dot{f}[\delta]\subseteq M\). Then we can find \(q'\leq q\) and an
\(\alpha<\delta\) such that \(q'\forces\dot{f}(\alpha)\notin M\).
But if \(q'\in H\subseteq\Q\) is generic then \(M[H]\) is an elementary substructure of
\(H_\theta^{V[G][H]}\) and, of course, \(f,\alpha\in M[H]\), leading to a contradiction.

Conversely, suppose that \(q\) does not force that
\(M\cap\omega_2^V\subseteq \dot{f}[\delta]\). We can again find \(q'\leq q\) and an
\(\alpha\in M\cap\omega_2\) such that \(q'\forces\alpha\notin\dot{f}[\delta]\).
Let \(q'\in H\subseteq\Q\) be generic. As before, \(M[H]\) is an elementary substructure
of \(H_\theta^{V[G][H]}\) and \(f,\alpha\in M[H]\). Since 
\(f\colon\omega_1\to\omega_2^V\) is a bijection, we must have \(f^{-1}(\alpha)\in M[H]\).
But by construction \(f^{-1}(\alpha)\) is an ordinal greater than \(\delta\)
while simultaneously \(M[H]\cap\omega_1=\delta\) by the \(M\)-genericity of \(q\),
giving a contradiction.

Fixing our putative generic condition \(q\), we can use the countable distributivity of 
\(\Q\) in \(V\) to see that \(\dot{f}\rest\delta\), and consequently \(M\cap\omega_2\),
exist already in \(V\). But we can extract \(r\) from \(M\cap\omega_2\).

Notice that, given a model \(M_{s\concat 1}\) in our original tree, no elementary
extension \(M_{s\concat 1}\prec X\) satisfying \(X\cap\omega_1=\delta\)
can contain \(\alpha_s\). This is because \(M_{s\concat 1}\) contains a bijection
\(g\colon \omega_1\to\beta_s\) and, by elementarity, \(g\) must restrict to a bijection
between \(\delta\) and \(M_{s\concat 1}\cap \beta_s\). 
But seen from the viewpoint of \(X\), that same function \(g\) must restrict to a
bijection between \(\delta\) and \(X\cap \beta_s\) and so \(X\cap \beta_s=
M_{s\concat 1}\cap\beta_s\).

Using this fact, we can now extract \(r\) from \(M\cap\omega_2\) in \(V\). Specifically,
we can decide at each stage whether the branch determined by \(r\) went left or
right depending on whether \(\alpha_s\in M\cap\omega_2\) or not. We conclude that
\(r\) appears already in \(V\), contradicting our original assumption. Therefore
there is no generic condition \(q\) as above and \(\Q\) is not proper in \(V[G]\).
\end{proof}


Ultimately, one hopes that by grounding the forcing axiom we lower its consistency 
strength while still being able to carry out at least some of the usual arguments and 
obtain at least some of the standard consequences. However, 
theorem~\ref{thm:NotProperAfterAddingReal} severely limits the kind of arguments we
can carry out under \grpfa. Many arguments involving \pfa use, among other things,
collapsing posets such as \(\Coll(\omega_1,2^\omega)\).
In contrast, if the poset witnessing \grpfa in a model is
any kind of iteration that at some stage added, say, a Cohen real, the theorem
prevents us from applying the forcing axiom to any of these collapsing posets.
It is thus unclear exactly how much strength of \pfa can be recovered from \grpfa.
In particular, while \grpfa implies that CH fails, the following key question 
remains open:

\begin{question}
\label{q:grPFAContinuum}
Does \grpfa imply that the continuum equals \(\omega_2\)?
\end{question}

Regarding the relation of \grpfa to other forcing axioms, a lot remains unknown.
Theorem~\ref{thm:CanonicalgrPFA} shows that \grpfa does not imply \ma. Beyond this
a few more things can be said.

\begin{proposition}
Martin's Axiom does not imply \grpfa.
\end{proposition}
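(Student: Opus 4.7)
The plan is to exhibit a concrete model in which Martin's Axiom holds but \grpfa fails, and the cleanest choice is the constructible universe \(L\). First, I would observe that \(L\models\mathrm{CH}\), which makes \(L\models\ma\) vacuously: Martin's Axiom only posits the existence of a \(\mathcal{D}\)-generic filter when \(|\mathcal{D}|<\mathfrak{c}\), and under CH this bounds \(|\mathcal{D}|\) by \(\aleph_0\), a case trivially handled in any poset by choosing a descending sequence of conditions.

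Next, I would argue \(L\models\lnot\grpfa\). Suppose for contradiction that \(L=W[G]\) for some inner model \(W\) and a \(W\)-generic \(G\subseteq\P\) with \(\P\in W\) proper in \(W\). By minimality of \(L\) among inner models of ZFC we have \(L\subseteq W\subseteq L\), so \(W=L\) and \(G\in L\); hence, without loss of generality, \(\P\) is trivial. With these choices the content of \grpfa collapses to \pfa itself, applied to proper posets in \(L\).

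The final step is to recall that \pfa must fail in \(L\). Every ccc poset is proper, so \pfa implies \(\mathrm{MA}_{\aleph_1}\), which in turn implies \(\mathfrak{c}\geq\aleph_2\) and so directly contradicts CH in \(L\). Combining the three steps yields \(L\models\ma+\lnot\grpfa\), as required. There is no serious obstacle in this plan: the whole argument rests on the minimality of \(L\) together with the standard fact that \pfa implies \(\lnot\mathrm{CH}\) via its ccc consequences, and the observation that \grpfa permits the trivial choice \(W=V\), in which case it reduces to \pfa.
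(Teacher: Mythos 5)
Your proof is correct, but it takes a genuinely different route from the paper's. The paper forces \ma together with \(\c=\omega_3\) via Solovay--Tennenbaum and then applies Reitz's class forcing above \(\omega_3\) to produce a model with the same \(H_\c\) (so \ma survives) that is not a set-forcing extension of any ground; the only remaining way for \grpfa to hold is via the trivial ground, i.e.\ full \pfa, which is ruled out because \pfa forces \(\c=\omega_2\). You instead take \(V=L\): \ma holds there vacuously because of CH (the Rasiowa--Sikorski argument), and the minimality of \(L\) among inner models shows its only ground is \(L\) itself with trivial forcing, so \grpfa collapses to \pfa, which contradicts CH via \(\mathrm{MA}_{\aleph_1}\). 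Both arguments hinge on the same structural point — eliminate all nontrivial grounds so that the grounded axiom degenerates to the full axiom, then contradict it with the value of the continuum — but yours avoids class forcing entirely and is considerably more elementary. What the paper's construction buys in exchange is a non-degenerate separation: its witness satisfies \ma with \(\c=\omega_3\), so \ma holds there for substantive reasons, whereas in \(L\) Martin's Axiom is true only because CH trivializes it. For the literal statement of the proposition your argument is a complete and valid proof.
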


\begin{proof}
Starting over some model, force with the Solovay-Tennenbaum iteration to produce
a model \(V[G]\) satisfying \ma and \(\c=\omega_3\). Now perform Reitz's ground
axiom forcing (cf.~\cite{Reitz2007:GroundAxiom}) above \(\omega_3\) to produce a model
\(V[G][H]\) still satisfying \ma and \(\c=\omega_3\) but which is not a
set-forcing extension of any model (note that \(H\) is added by class-sized forcing).
Therefore the only way \(V[G][H]\) could satisfy \grpfa is if it actually
satisfied \pfa in full. But that cannot be the case since \pfa implies that
the continuum equals \(\omega_2\).
\end{proof}

\begin{proposition}
The grounded Proper Forcing Axiom does not imply \mas (and not even \(\mathrm{MA}_{\aleph_1}(\textup{\(\sigma\)-centred})\)).
\end{proposition}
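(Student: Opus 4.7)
The plan is to imitate the proof of Theorem \ref{thm:CanonicalgrPFA}, but to replace the initial single Cohen real with $\aleph_1$ many. Concretely, I would start with a ground model $V$ containing a supercompact cardinal $\kappa$ with a Laver function $\ell$, take the countable-support iteration $\P$ built from $\ell$ exactly as in that theorem, and then force over $V$ with the product $\Add(\omega,\omega_1)\times\P$, producing $V[G'\times H]$.

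The verification that $V[G'\times H]\models\grpfa$ is a routine adaptation of the supercompactness-lifting argument from Theorem \ref{thm:CanonicalgrPFA}. Since $\Add(\omega,\omega_1)$ has rank below $\kappa$, the embedding $j\colon V\to M$ associated to a Laver guess $j(\ell)(\kappa)=\Q$ is the identity on it and so lifts immediately to $j\colon V[G']\to M[G']$. After factoring $j(\P)=\P*\dot{\Q}*\Ptail$ in $M[G']$, forcing the tail over $V[G'\times H]$, and lifting $j$ through $H$, the closure and elementarity of the lifted embedding produce a $\mathcal{D}$-generic filter on $\Q$ for any given $\aleph_1$-family $\mathcal{D}$.

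The new content is showing that $\mathrm{MA}_{\aleph_1}(\textup{\(\sigma\)-centred})$ fails in $V[G'\times H]$. Applying the axiom to Hechler forcing (which is $\sigma$-centred) against an $\aleph_1$-sized family of reals would yield a single real dominating the family, so it suffices to exhibit an unbounded family of size $\aleph_1$. I claim the Cohen reals $L=\{c_\alpha : \alpha<\omega_1\}$ added by $G'$ form one. Given any $g\in V[G'\times H]$, note that $\Add(\omega,\omega_1)$ remains Knaster in $V[H]$ (a finite-support product of countable posets is Knaster in any ZFC model), so $g$ has a nice $\Add(\omega,\omega_1)$-name over $V[H]$ with countable support $S\subseteq\omega_1$. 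For any $\alpha\in\omega_1\setminus S$, mutual genericity makes $c_\alpha$ Cohen-generic over $V[H][G'\rest S]$, a model that already contains $g$; since a Cohen real is not dominated by any function from its ground model, $c_\alpha\not\leq^* g$. So no single $g$ dominates $L$, giving $\mathfrak{b}=\aleph_1<\c$ in $V[G'\times H]$.

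The main delicate point is the $\grpfa$ lifting step, in particular that $\Q$ really is proper at stage $\kappa$ of $j(\P)$ so that it gets forced with there; as in Theorem \ref{thm:CanonicalgrPFA}, this is handled by choosing $\theta$-supercompactness large enough to guarantee the requisite agreement between $V$ and $M$ on properness for posets of size less than $\c$.
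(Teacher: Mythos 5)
Your proposal is correct and is essentially the paper's own argument: replace \(\Add(\omega,1)\times\P\) by \(\Add(\omega,\omega_1)\times\P\), observe that the \(\aleph_1\) Cohen reals witness \(\mathfrak{b}=\aleph_1<\c\), and note that \(\mathrm{MA}_{\aleph_1}(\text{\(\sigma\)-centred})\) applied to Hechler forcing would contradict this. The only (immaterial) difference is that you prove unboundedness of the Cohen reals directly via nice names and mutual genericity, whereas the paper cites the nonmeagerness argument of theorem~\ref{thm:grMASmallCharacts}.
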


\begin{proof}
We could have replaced the forcing \(\Add(\omega,1)\times\P\)
in the proof of theorem~\ref{thm:CanonicalgrPFA} with \(\Add(\omega,\omega_1)\times\P\)
without issue. 
As in the proof of theorem~\ref{thm:grMASmallCharacts} we get a model
whose bounding number equals \(\aleph_1\), but this contradicts
\(\mathrm{MA}_{\aleph_1}(\text{\(\sigma\)-centred})\) as in the proof of
corollary~\ref{cor:grMANotImpliesMAsigma}.
\end{proof}

\begin{figure}[h!t]
\[
\xymatrix{
&\pfa\ar@{-}[d]\ar@{-}[dr]\\
&\ma\ar@{-}[dl]\ar@{-}[dr]& \grpfa\ar@{.}[d]^{?}\\
\mak\ar@{-}[d]&&\grma\ar@{-}[d]\\
\textup{MA(\(\sigma\)-linked)}\ar@{-}[d]&&\text{local \grma}\ar@{-}[ddl]\\
\mas\ar@{-}[dr]\\
&\mathrm{MA(Cohen)}\ar@{-}[d]\\
&\mathrm{MA(countable)}}
\]
\end{figure}

While it is easy to see that \grpfa implies \(\mathrm{MA}_{\aleph_1}(\mathrm{Cohen})\),
whether or not it even implies \mac is unclear (a large part of the problem being that
we do not have an answer to question~\ref{q:grPFAContinuum}). But an even more
pressing question concerns the relationship between \grpfa and \grma:

\begin{question}
Does \grpfa imply \grma?
\end{question}

Even if the answer to question~\ref{q:grPFAContinuum} turns out to be positive, we
conjecture that the answer to this last question is negative. We expect that
it is possible to use the methods of \cite{Reitz2007:GroundAxiom} or, more
generally, \cite{FuchsHamkinsReitz2015:SetTheoreticGeology} in combination
with the forcing construction of theorem~\ref{thm:CanonicalgrPFA} to produce
a model of \grpfa which has no ccc ground models and in which \ma (and consequently
also \grma) fails.

\bibliographystyle{amsplain}
\bibliography{bibbase}
\end{document}